\documentclass{amsart}
\usepackage{marktext}
\usepackage{gimac}
\renewcommand{\le}{\leqslant}
\renewcommand{\ge}{\geqslant}

\usepackage{accents}
\newcommand{\ubar}[1]{\underaccent{\bar}{#1}}


\newcommand{\GI}[2][]{\sidenote[colback=yellow!20]{\textbf{GI\xspace #1:} #2}}
\newcommand{\RP}[2][]{\sidenote[colback=green!10]{\textbf{RP\xspace #1:} #2}}
\newcommand{\HL}[2][]{\sidenote[colback=orange!20]{\textbf{HL\xspace #1:} #2}}

\RequirePackage{datetime,currfile}

\makeatletter
\newcommand{\leqnomode}{\tagsleft@true\let\veqno\@@leqno}
\newcommand{\reqnomode}{\tagsleft@false\let\veqno\@@eqno}
\newcommand{\mylabel}[2]{\def\@currentlabel{#2}\label{#1}}
\makeatother

\newcommand{\D}{\partial}
\newcommand{\Dt}{\partial_t}
\newcommand{\Dx}{\partial_x}
\newcommand{\Dxx}{\partial_x^2}

\newcommand{\loc}{_{\textup{loc}}}

\newcommand{\eps}{\epsilon}

\newcommand{\sgn}{\operatorname{sign}}



\def\Xint#1{\mathchoice
  {\XXint\displaystyle\textstyle{#1}}%
  {\XXint\textstyle\scriptstyle{#1}}%
  {\XXint\scriptstyle\scriptscriptstyle{#1}}%
  {\XXint\scriptscriptstyle\scriptscriptstyle{#1}}%
\!\int}
\def\XXint#1#2#3{ \setbox0=\hbox{$#1{#2#3}{\int}$ }
\vcenter{\hbox{$#2#3$ }}\kern-.6\wd0}
\def\avint{\Xint-}

\newcommand{\lin}{\mathit{lin}}
\newcommand{\BV}{\mathit{BV}}

\begin{document}

\title[Global dynamics and Photon Loss in the Kompaneets Equation]
      {Global Dynamics and Photon Loss in the Kompaneets Equation}

\date{\today} 

\author[Ballew]{Joshua Ballew}
\address{
  Department of Mathematics and Statistics,
  Slippery Rock University,
  Slippery Rock, PA 16057, USA.
}
\email{joshua.ballew@sru.edu}

\author[Iyer]{Gautam Iyer}
\address{%
  Department of Mathematical Sciences,
  Carnegie Mellon University,
  Pittsburgh, PA 15213.}
\email{gautam@math.cmu.edu}

\author[Levermore]{C. David Levermore}
\address{
  Department of Mathematics,
  University of Maryland,
  College Park, MD 20742-4015.}
\email{lvrmr@umd.edu}

\author[Liu]{Hailiang Liu}
\address{%
   Department of Mathematics,
   Iowa State University,
   Ames, IA 50011.}
\email{hliu@iastate.edu}
\author[Pego]{Robert L. Pego}
\address{%
  Department of Mathematical Sciences,
  Carnegie Mellon University,
  Pittsburgh, PA 15213.}
\email{rpego@cmu.edu}

\thanks{%
  This work has been partially supported by
  the National Science Foundation under
  the NSF Research Network Grant nos.\ RNMS11-07444, \ RNMS11-07291(KI-Net),
  grants
  NSF DMS-1814147 and DMS-2108080 to GI, 
  NSF Grants DMS07-57227 and DMS09-07963 to HL,
  DMS 2106534 (RLP),
  and the Center for Nonlinear Analysis.
}

\subjclass[2020]{Primary:
  35B40;
  Secondary: 
  35K65, 35Q85.
}
\keywords{Kompaneets equation, Sunyaev--Zeldovich effect, Bose--Einstein condensate, quantum entropy,
LaSalle invariance principle}
\begin{abstract} 
  The Kompaneets equation governs dynamics of the photon energy spectrum in certain high temperature (or low density) plasmas.
  We prove several results concerning the long-time convergence of solutions to Bose--Einstein equilibria 
  and the failure of photon conservation.
  In particular, we show the total photon number can decrease with time  
  via an outflux of photons at the zero-energy boundary. 
  The ensuing accumulation of photons at zero energy is analogous to Bose--Einstein condensation.
  We provide two conditions that guarantee that photon loss occurs, and show that once loss is initiated then it persists forever.
  We prove that as $t\to \infty$, solutions necessarily converge to equilibrium and we characterize the limit in terms of the total photon loss.
  Additionally, we provide a few results concerning the behavior of the solution near the zero-energy boundary, an Oleinik inequality, a comparison principle, and show that the solution operator is a contraction in $L^1$.
  None of these results impose a boundary condition at the zero-energy boundary.
\end{abstract}
\maketitle

\section{Introduction.}

The Kompaneets equation governs evolution of the photon energy spectrum in high temperature (or low density) plasmas, which are spatially uniform, isotropic, isothermal and non-relativistic, and in which the dominant energy exchange mechanism is Compton scattering.
This equation was first derived by Kompaneets~\cite{Kompaneets57}
and
is now fundamental to modern cosmology and high-energy astrophysics.
It has applications in the study of the interaction between matter and radiation in the early universe, the radiation spectra for the accretion disk around black holes, and the Sunyaev--Zeldovich effect, the reduction of the cosmic microwave background (CMB) brightness in near clusters of galaxies with hot gases~\cite{SunyaevZeldovich70,SunyaevZeldovich72,ShakuraSunyaev73,Birkinshaw99}.

Mathematically, the Kompaneets equation takes the non-dimensional form
\begin{equation}\label{e:kompaneetsF}
  \partial_t f 
  = \frac{1}{x^2} \, 
    \partial_x \brak[\big]{ x^4 \paren[\big]{ \partial_x f + f + f^2 } } \,,
    \qquad x \in (0, \infty)\,,
    ~t > 0\,.
\end{equation}
Here $x$ is proportional to photon energy and $t$ is proportional to time.
The variable $f$ expresses the photon number density relative to the 
measure $x^2dx$ that appears due to the assumption of isotropy in a
3-dimensional space of wave vectors.
The physically relevant boundary condition at infinity requires that the incoming photon flux vanishes.
The boundary at $x = 0$ requires more care to understand, as the diffusion coefficient vanishes.
Escobedo et al.~\cite{EscobedoHerreroEA98} showed that solutions to~\eqref{e:kompaneetsF} are (globally) unique without imposing any boundary condition at $x = 0$.
One interesting feature of the Kompaneets equation is that the absence of the boundary condition at $x = 0$ allows for photon loss,
despite the fact that photon numbers are nominally conserved in Compton scattering.
In this paper we provide rigorous results describing the manner by which photons can be lost through an outflux at $x = 0$.

Physically, an outflux of photons at $x = 0$ means that a macroscopic number of photons accumulate at negligible values of energy. 
This phenomenon has been regarded by several authors~\cite{ZelDovichLevich69,Syunyaev71,CaflischLevermore86,EscobedoHerreroEA98,EscobedoMischler01,JosserandPomeauEA06,KhatriSunyaevEA12,LevermoreLiuEA16} as analogous to the formation of a Bose--Einstein condensate---a collection of bosons occupying the same minimum-energy quantum state.
While the existence of such condensates was predicted in 1924 by Bose and Einstein~\cites{Bose24,Einstein25,Einstein25a}, they were first exhibited in 1995~\cite{AndersonEnsherEA95} for Rubidium-87 vapor.
For photons, Bose--Einstein condensates were experimentally exhibited in 2010~\cite{KlaersSchmittEA10}, but in circumstances dominated by physics different from Compton scattering.
In the cosmological setting, true Bose--Einstein condensation is thought to be suppressed by other physical mechanisms 
that become important at very low energy \cite{KhatriSunyaevEA12}.

To briefly explain the outflux phenomenon for \eqref{e:kompaneetsF}, we note that the total photon number is
\begin{equation*}
  \mathcal N(f_t) \defeq \int_0^\infty f_t(x)\,x^2 dx\,.
\end{equation*}
(We clarify that $f_t(x)$ here is the value of $f$ at $(t, x)$, and not the time derivative.)
By multiplying~\eqref{e:kompaneetsF} by $x^2$ and integrating, one immediately sees that the total photon number is a conserved quantity, provided the photon flux vanishes at both $0$ and $\infty$.

Now it is also known (see for instance~\cites{CaflischLevermore86,LevermoreLiuEA16}, or Section~\ref{s:largetime}, below) that solutions to~\eqref{e:kompaneetsF} formally dissipate a quantum entropy.
This suggests that $f_t$ converges to an equilibrium solution as $t \to \infty$.
The nonnegative equilibrium solutions to~\eqref{e:kompaneetsF} can readily be computed by solving an ODE, and are given by Bose--Einstein statistics, taking the form
\begin{equation}\label{e:fhat}
  \hat f_\mu(x) \defeq \frac{1}{e^{x + \mu} - 1}\,,
  \qquad\text{for } \mu \geq 0\,.
\end{equation}
(For mathematical convenience, as in \cite{EscobedoHerreroEA98} we take the parameter $\mu$ as proportional to the negative of chemical potential.)
Of course, one can now compute the maximum photon number in equilibrium to be
\begin{equation*}
  \sup_{\mu \geq 0} \mathcal N( \hat f_\mu)
    = \mathcal N(\hat f_0)
    = \int_0^\infty \frac{x^2}{e^x - 1} \, dx
    = 2\zeta(3)
    \approx 2.404\dots
    < \infty \,.
\end{equation*}
Here $\zeta(s) = \sum_1^\infty 1/k^s$ is the Riemann zeta function.
Thus if one starts~\eqref{e:kompaneetsF} with initial data such that $\mathcal N(f_0) > \mathcal N( \hat f_0 )$, then the total photon number can not be a conserved quantity---at least not in the infinite-time limit---and there must be a dissipation mechanism through which photons are lost.


In this paper we prove that photons are indeed lost in finite time through an outflux at $0$ 
for all solutions of the Kompaneets equation \eqref{e:kompaneetsF} with $\mathcal N(f_0) > \mathcal N( \hat f_0 )$. 
However, $\mathcal N(f_0) \leq \mathcal N(\hat f_0)$ does not guarantee that the photon number is conserved, and a family of examples was constructed by Escobedo et\ al.~\cite{EscobedoHerreroEA98}.
Here we provide an explicit condition on the initial data guaranteeing a finite time photon outflux, that does not necessitate or preclude $\mathcal N(f_0) > \mathcal N(\hat f_0)$.
We also prove several other results concerning existence, uniqueness, and 
convergence to equilibrium in the long time limit. 

To state our results it is convenient to reformulate the problem in terms of the photon number density with respect to the measure $dx$, defined by
\begin{equation}\label{e:nDef}
  n_t(x) \defeq x^2 f_t(x)\,.
\end{equation}
In terms of this photon number density, the stationary solutions~\eqref{e:fhat} are now
\begin{equation}\label{e:nhat}
  \hat n_\mu(x) \defeq \frac{x^2}{e^{x + \mu} - 1} \,,
  \qquad
  \text{for } \mu \geq 0\,,
\end{equation}
and the total photon number is now
\begin{equation*}
  N(n_t) \defeq \int_0^\infty n_t(x) \, dx
    = \int_0^\infty x^2 f_t(x) \, dx
    = \mathcal N(f_t)\,.
\end{equation*}
Two of our main results can be stated non-technically as follows:
\begin{enumerate}\reqnomode
  \item
    We show that the total photon number is non-increasing in time, and can only decrease through an outflux of photons at $x = 0$.
    The boundary conditions ensure that photons can never be lost to (or gained from) infinity, and so the fact that the total photon number is decreasing means that there can never be an influx of photons at $x = 0$.
    Moreover, for $0 \leq s \leq t$, we prove the following loss formula for the total photon number:
    \begin{equation}\label{e:lossIntro}
      N(n_t) = N(n_s) - \int_s^t n_\tau(0)^2 \, d\tau \,.
    \end{equation}

  \item
    If the initial data is not identically~$0$, then we prove that as $t \to \infty$, the solution converges strongly in~$L^1$ to one of the equilibrium solutions~$\hat n_\mu$.
    The parameter~$\mu \in [0, \infty)$ is characterized by the property
    \begin{equation*}
      N(\hat n_\mu) = N(n_0) - \int_0^\infty n_t(0)^2 \, dt\,.
    \end{equation*}
    In particular, $N(n_t)\to N(\hat n_\mu)$ as $t\to\infty$. Consequently, if $N(n_0)>N(\hat n_0)$, photon loss must occur in finite time.
\end{enumerate}

Even though we show there exists $\mu \in [0, \infty)$ such that $n_t \to \hat n_\mu$ as~$t \to \infty$, there appears to be no general way to determine~$\mu$ from the initial data.
There are however two cases where this can be done:
\begin{enumerate}
  \item
    If $n_0 \geq \hat n_0$, then $\mu = 0$ and $n_t \to \hat n_0$ in $L^1$ as $t \to \infty$.
  \item
    If $n_0 \leq \hat n_0$ on the other hand, then there is no photon loss, and $\mu \geq 0$ will be the unique number such that $N(n_0) = N(\hat n_\mu)$.
\end{enumerate}
\medskip

In light of~\eqref{e:lossIntro}, we see the change in the total photon number between time $0$ and $t$ is exactly $\int_0^t n_s(0)^2 \, ds$. 
Following precedent, one can interpret the accumulated outflux at zero energy, as the ``mass'' of a Bose--Einstein condensate at time $t$.
Clearly this is non-decreasing as a function of time.
We will in fact show a stronger result: once photon loss is initiated, it persists for all time without stopping.
That is,
there exists $t_* \in [0, \infty]$ such that $n_t(0) = 0$ for all $t < t_*$ and $n_t(0) > 0$ for all $t > t_*$.
This time $t_*$ can be viewed as the time the Bose--Einstein condensate starts forming.

There are situations in which photon loss never occurs 
(i.e., $t^* = +\infty$).
Indeed, we show that if $n_0 \leq \hat n_0$, then total photon number is conserved 
and $t_*$ is infinite.
On the other hand, there are certain scenarios under which one can prove $t_*$ is finite.
One such scenario was previously identified by Escobedo et\ al.~\cite{EscobedoHerreroEA98} where the authors construct a family of solutions that develop a Burgers-like shock at $x = 0$ in finite time.
In this paper we prove that $t_*$ is finite in two different scenarios:
\begin{enumerate}
  \item 
    If the total photon number initially is larger than $N(\hat n_0) = 2\zeta(3)$, the maximum photon number in equilibrium, then $t_*$ must be finite.

  \item
    If $\partial_x n_0(0) > 1$, then we show $t_*$ is finite, and furthermore we provide an explicit upper bound for $t_*$.
\end{enumerate}

To the best of our knowledge the second scenario above was not identified earlier.
The first scenario mathematically rules out the possibility that the Kompaneets equation
allows photons to remain conserved while concentrating at small but positive energy. 
Such behavior was suggested in \cite{CaflischLevermore86} by number-conserving numerical simulations 
and was shown to be compatible with entropy-minimization arguments.
Our results on photon loss indicate, instead, that numerical schemes for the Kompaneets equation 
should \emph{not} be designed to conserve photon number at the zero-energy boundary,
since solutions of \eqref{e:kompaneetsF} do not have this property in general.

Analogs of many of our current results were previously obtained by the authors
in~\cite{LevermoreLiuEA16} and~\cite{BallewIyerEA16} for different simplified
models of~\eqref{e:kompaneetsF} obtained by neglecting terms that seem inessential to the photon loss phenomenon
but fail to preserve true Bose--Einstein equilibria.
As in those previous works, we make essential use of mathematical tools traditionally associated with first-order nonlinear
conservation laws without diffusion, such as an $L^1$ contraction property for solutions, negative slope bounds (an Oleinik inequality),
and comparisons to compression and rarefaction waves.
The proofs for the full Kompaneets equation~\eqref{e:kompaneetsF}, however, are significantly different and more involved.
All of our main results (including precise statements of those non-technically described earlier) are stated in Section~\ref{s:results} below.
In a sense, our results justify the notion, examined nonrigorously by many previous authors,
that at the zero-energy boundary the diffusion term in \eqref{e:kompaneetsF} can be neglected 
and the flux is dominated by the nonlinear advection term $n^2 = x^4 f^2$ which arises 
from a quantum enhancement of scattering into states occupied by bosons.


There are various mathematical or physical mechanisms that may prevent (or permit)
loss of photons at zero energy or formation of a true Bose--Einstein condensate. 
Kompaneets derived \eqref{e:kompaneetsF} in a Fokker--Planck approximation 
to a quantum Boltzmann equation for photon scattering from a fixed
Maxwellian electron distribution.
This equation can be written in the nondimensional form 
\begin{align*}
 \partial_t f(x,t) &= 
\int_0^\infty \Bigl(
f(x_*,t)\bigl(1+f(x,t)\bigr)\,e^{-x}\\ 
& \qquad\qquad -f(x,t)\bigl(1+f(x_*,t)\bigr)\,e^{-x_*}\Bigr) \sigma(x,x_*)x_*^2\,dx_*\,,
\end{align*}
cf.~\cite[Eq.~(12.47)]{Castor04}, \cite[Eq.~(5.67)]{EscobedoMischlerEA03} and \cite{CortesEscobedo19}.
In this Boltzmann--Compton equation the form of $\sigma(x,x_*)=\sigma(x_*,x)$
is determined by approximating the Klein--Nishina cross section for Compton scattering,
and is strongly peaked where $x\approx x_*$.
Under a simplifying boundedness assumption on the scattering kernel,
Escobedo \& Mischler \cite{EscobedoMischler01} showed that photon loss in
finite time is impossible, but a concentration of photons approaching zero
energy appears in the limit $t\to\infty$.
The behavior in such a case was further studied in \cite{EscobedoMischlerEA04}. 
For the physical kernel, Ferrari \& Nouri \cite{FerrariNouri06}
showed that if the initial data everywhere exceeds the Planckian 
($f_0\ge \hat f_0$), then a number-conserving weak solution fails to exist 
for any positive time, while if $f_0\le \hat f_0$ then a global 
solution exists.
Recent work by Cort\'es \& Escobedo \cite{CortesEscobedo19} reviews related results
and revisits the question with a different kind of kernel truncation, obtaining
an existence result that does not preclude formation of a Dirac delta mass at zero energy.

Physical effects that become important at low energy and destroy photon
conservation include Bremsstrahlung and double Compton scattering \cite{KhatriSunyaevEA12}.
The derivation of the Kompaneets equation from a quantum Boltzmann equation
has also been revisited recently by several authors in the physical literature \cite{OliveiraMaesEA21,MendoncaTercas17,Milonni21}.
E.g., Mendon{\c{c}}a \& Ter{\c{c}}as \cite{MendoncaTercas17} suggest that photons in some plasmas 
can have an effective mass at zero frequency, and modify the Kompaneets equation accordingly.

Finally, we mention that a considerable body of work exists concerning the 
related but distinct phenomenon 
of Bose--Einstein condensation in quantum models of boson-boson scattering,
described by Boltzmann--Nordheim (or Uehling--Uhlenbeck) equations. 
For analytical studies of condensation phenomena and convergence to equilibrium
in these models we refer to 
\cite{Spohn10,EscobedoVelazquez15,Lu13,LuMouhot12,LuMouhot15,Lu18,CaiLu19},
the book \cite{PomeauTran19}, and references therein.
For the Boltzmann--Nordheim equation, Fokker--Planck-type approximations of higher order 
have been developed formally by Josserand et al.~\cite{JosserandPomeauEA06}
and analyzed in work of J\"ungel \& Winkler \cite{ JungelWinkler15, JungelWinkler15a}.
There are also studies of other nonlinear Fokker--Planck models 
that admit Bose--Einstein equilibria and the possibility of condensation,
and we refer the reader to \cite{Toscani12,CarrilloDiFrancescoEA16,CarrilloHopfEA20}
for work on this and further references.

Even though we do not consider alternative models or mechanisms here,
our present results for solutions of the Kompaneets equation itself 
provide some understanding of the mechanism by which Compton scattering creates a photon flux towards low energy.

\section{Main Results.}\label{s:results}

This section is devoted to stating precise versions of our main results.
In terms of the photon number density~$n$ (defined in~\eqref{e:nDef}), equation~\eqref{e:kompaneetsF} becomes
\begin{equation}\label{e:komp}
  \partial_t n = \partial_x J\,,
  \qquad
  \text{where}\quad
  J = J(x, n) \defeq x^2 \partial_x n + (x^2 - 2x) n + n^2\,,
\end{equation}
is the photon flux to the left.
We will study equation~\eqref{e:komp} with bounded nonnegative initial data $n_0$, and impose the no-flux boundary condition
\begin{equation}\label{e:noflux}
  \lim_{x \to \infty} J(x, n) = 0
\end{equation}
at infinity.
As mentioned earlier, we will not impose any boundary condition at $x = 0$.

\subsection{Construction and Properties of Solutions.}

Previous work of Escobedo et\ al.~\cite{EscobedoHerreroEA98} shows there is a \emph{unique}, globally regular solution to~\eqref{e:komp}--\eqref{e:noflux}.

\begin{theorem}[Existence~\cite{EscobedoHerreroEA98}]\label{t:exist}
  For any bounded measurable $n_0 \geq 0$ satisfying
  \begin{equation}\label{e:x2n}
    \lim_{x \to \infty} x^2 n_0(x) \to 0\,,
  \end{equation}
  there exists a unique nonnegative function
  \begin{equation}\label{e:nSpace}
    n \in C( [0, \infty); L^1) \cap L^\infty_{\loc}( [0, \infty); L^\infty( [0, \infty) ) \cap  C^{2,1}((0, \infty)^2)
  \end{equation}
  that is a solution to \eqref{e:komp}--\eqref{e:noflux} with initial data~$n_0$.
  Moreover, for any~$T < \infty$ we have
  \begin{gather}\label{e:x2nPtwise}
    \lim_{x\to \infty}x^2 n_t(x) =0 \,,
    \qquad\text{uniformly for } 0 \leq t < T\,,
    \\
    \label{e:x2dxnVanish}
    \lim_{x\to \infty} x^2 \partial_x n(x, t) =0\,,
      \qquad\text{uniformly on the set } \frac{1}{x^2} \leq t < T\,.
  \end{gather}
\end{theorem}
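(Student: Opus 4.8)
The plan is to treat~\eqref{e:komp} as a quasilinear equation that is uniformly parabolic on every compact subset of $(0,\infty)$ and degenerate only at $x=0$. Expanding the flux, \eqref{e:komp} becomes
\begin{equation}\label{e:nondiv}
  \partial_t n = x^2 \partial_x^2 n + (x^2 + 2n)\,\partial_x n + (2x-2)\,n\,.
\end{equation}
At $x=0$ the diffusion coefficient $x^2$ vanishes, and the relevant Fichera function $x^2 + 2n - \partial_x(x^2) = x^2 + 2n - 2x$ heuristically reduces to $2\,n(0,t)\ge 0$ there; this is the structural reason that no boundary condition is needed at $x=0$. To construct a solution I would regularize by truncating to $0<x<R$ with a no-flux condition at $x=R$ (and, if convenient, replacing $x^2$ by $x^2+\delta$ to remove the degeneracy near $0$), solving the resulting uniformly parabolic problem by standard quasilinear theory. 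Everything then hinges on a priori bounds uniform in $R$ and $\delta$.

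First come nonnegativity and a local-in-time sup bound, both from the comparison principle applied to explicit constant and exponential supersolutions; because the zeroth-order coefficient $2x-2$ is positive for $x>1$ these bounds may grow in time, which is why uniform-in-time bounds appear elsewhere in the paper only for exponentially decaying data. Next I would propagate the decay~\eqref{e:x2n}. Setting $m = x^2 n$ turns~\eqref{e:nondiv}, for large $x$ and small $n$, into $\partial_t m \approx \partial_x^2 m + \partial_x m + (6/x^2)\,m$, a convection--diffusion equation whose drift transports mass toward smaller $x$ and whose positive zeroth-order term is $O(1/x^2)$; a maximum-principle and Gr\"onwall argument on $\{x\ge R\}\times[0,T)$ then keeps $\sup_{x\ge R} m$ small, yielding~\eqref{e:x2nPtwise}. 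Granted a sup bound on $n$, the coefficients of~\eqref{e:nondiv} are bounded on compact subsets of $(0,\infty)^2$, so De Giorgi--Nash--Moser gives interior H\"older continuity and Schauder bootstrapping upgrades this to the $C^{2,1}$ regularity in~\eqref{e:nSpace}.

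With these bounds I would pass to the limit $R\to\infty$, $\delta\to0$ by Arzel\`a--Ascoli on compact subsets of $(0,\infty)^2$; the no-flux condition~\eqref{e:noflux} and continuity into $L^1$ then follow from~\eqref{e:x2nPtwise} and $L^1$ mass control. Uniqueness I would obtain from a comparison principle, via a Kru\v{z}kov-type doubling of variables adapted to the degenerate diffusion (essentially the uniqueness of~\cite{EscobedoHerreroEA98}); the crucial point is that the boundary contribution at $x=0$ carries a favorable sign precisely because the Fichera function is nonnegative there, so no boundary data at $x=0$ enters the estimate.

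Finally, for the derivative decay~\eqref{e:x2dxnVanish} I would use a short-scale parabolic rescaling about a large point $x_0$: set $w(\eta,s) = x_0^2\, n(x_0+\eta,\, t_0 + s/x_0^2)$. A direct computation shows $w$ solves an equation with diffusion coefficient $(1+\eta/x_0)^2\to 1$, drift coefficient $\to 1$, and zeroth-order coefficient $O(1/x_0)$, hence uniformly parabolic with constants independent of $x_0$; moreover $\sup w = \sup x_0^2 n = o(1)$ by~\eqref{e:x2nPtwise}. The interior gradient estimate then gives $|\partial_\eta w(0,s_0)| \le C \sup w$ once $s_0 \ge c$, that is once $t_0 \ge c/x_0^2$, which is exactly the threshold $1/x^2\le t$ in the statement; since $\partial_\eta w(0,\cdot) = (x^2\partial_x n)|_{x=x_0}$, this proves~\eqref{e:x2dxnVanish}. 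I expect the main obstacle to be the well-posedness in the degenerate regime: pushing the comparison and uniqueness argument through with no boundary condition at $x=0$ requires showing that the degenerate boundary term never transfers information, and securing uniform-in-$x_0$ parabolic constants (so that regularity does not degenerate as $x_0\to\infty$) is the technical heart of the derivative estimate.
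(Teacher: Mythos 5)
You should first be aware that this paper does not prove Theorem~\ref{t:exist} at all: existence, uniqueness, and the vanishing conditions \eqref{e:x2nPtwise}--\eqref{e:x2dxnVanish} are imported from Theorem~2 of \cite{EscobedoHerreroEA98} (the latter two from page~3850 of that proof), so your proposal has to be measured against that reference. Measured so, your outline reconstructs essentially the same strategy: truncation, comparison with explicit super-solutions, interior De Giorgi--Nash--Moser/Schauder regularity, a compactness limit, and uniqueness resting on the outflow structure at $x=0$. Two of your choices line up remarkably well with the paper. Your insistence that the sup bound must be allowed to grow in time is exactly the correction this paper makes to \cite{EscobedoHerreroEA98} (see the remark after Proposition~\ref{p:nBdd}): the time-independent super-solution $\beta\min\{1,x^{-2}\}$ asserted there fails, and one must use $\beta e^{4t}\min\{1,x^{-2}\}$ --- which is precisely why \eqref{e:nSpace} claims only $L^\infty_{\loc}$ in time, with global boundedness recovered later by a different stationary super-solution. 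And your rescaled-cylinder argument for \eqref{e:x2dxnVanish}, with $w(\eta,s)=x_0^2\,n(x_0+\eta,t_0+s/x_0^2)$, is correct as far as it goes and cleanly explains the otherwise odd-looking region $1/x^2\le t<T$; the reference obtains the same conclusion from classical regularity estimates in essentially this way.

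Two steps of your sketch, however, are too thin as written. First, for \eqref{e:x2nPtwise} a bare maximum principle plus Gr\"onwall on $m=x^2n$ over $\{x\ge R\}$ cannot work, because the lateral boundary value $m(R,t)=R^2 n(R,t)$ is of order $\beta(t)$, not small, so nothing forces $\sup_{x\ge R}m$ to be small; one needs a barrier decaying away from $x=R$ against the leftward drift, as in \cite{EscobedoHerreroEA98}, where the super-solution $\beta(t)\omega(x)/x^2$ with $\omega(x)=e^{R-x}(x/R)^4+2\sigma\int_R^x e^{s-x}(x/s)^4\,ds$ exploits exactly the transport mechanism you identify (boundary influence penetrates rightward only by diffusion against the drift, hence the factor $e^{R-x}$). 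Second, uniqueness in the class \eqref{e:nSpace} with no boundary condition at $x=0$ is the genuinely hard part, and the nonnegativity of the Fichera function is only the heuristic: to run your Kru\v{z}kov-type argument one must show the boundary contribution $x^2\partial_x(n-m)$ vanishes as $x\to0$ in a suitable averaged sense, which in the present paper requires the Oleinik bound (Lemma~\ref{l:oleinik}) and the flux lemmas --- results proved after, and partly using, Theorem~\ref{t:exist} --- while the self-contained uniqueness argument of \cite{EscobedoHerreroEA98} is correspondingly more delicate. Relatedly, if you regularize $x^2\mapsto x^2+\delta$ you must impose some boundary condition at $x=0$ for the approximate problems and then show the limit forgets it. You flag this cluster of issues yourself as the main obstacle, which is the right diagnosis, but the proposal as written does not close it.
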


Existence and uniqueness of the solution is the content of Theorem 2 in~\cite{EscobedoHerreroEA98}.
The vanishing conditions~\eqref{e:x2nPtwise}--\eqref{e:x2dxnVanish} are contained in the proof of this theorem on page~3850, and we refer the reader to~\cite{EscobedoHerreroEA98} for details.
\begin{remark}\label{r:bounded}
  In Section~\ref{s:pointwise} (Proposition~\ref{p:nBdd}) we will show later that $n$ is bounded globally in both space and time, not just locally bounded as stated in~\eqref{e:nSpace}, and moreover infer from \cite{EscobedoHerreroEA98} that \eqref{e:x2nPtwise} and \eqref{e:x2dxnVanish} hold with $T=\infty$.
\end{remark}
\begin{remark}
  In~\cite{EscobedoHerreroEA98}, the authors assumed the initial data~$n_0$ is continuous.
  This assumption isn't necessary and can be relaxed using a density argument and the $L^1$ contraction property 
  (cf.~\cite{LevermoreLiuEA16}).
\end{remark}
\begin{remark}
  We reiterate that while Theorem~\ref{t:exist} requires a no-flux boundary condition at $x = \infty$ (equation~\eqref{e:noflux}), it does not require a boundary condition at $x = 0$.
  That is, Theorem~\ref{t:exist} guarantees that solutions to~\eqref{e:komp}--\eqref{e:noflux} are globally unique, without requiring any boundary condition at $x = 0$.
\end{remark}

From the vanishing conditions~\eqref{e:x2nPtwise}--\eqref{e:x2dxnVanish} we immediately see that
\begin{equation}\label{jr}
  \lim_{R\to \infty}\int_s^t \abs{J(R, n_\tau)} \, d\tau = 0 \,,
  \qquad\text{for any } 0 < s \leq t \,.
\end{equation}
The behavior of the solution at $0$, however, is a little more delicate.
The constructed examples in~\cite{EscobedoHerreroEA98} (discussed below) show that the function $n$ can not always be extended continuously to a function defined on the domain $(x, t) \in [0, \infty) \times (0, \infty)$.
However, for every $t > 0$ the function $n_t(x)$ can be extend continuously at $x = 0$.
\begin{lemma}[Continuity at $x = 0$]\label{l:n0exist}
  Let $T > 0$, let $Q_T = (0, \infty) \times (0, T)$ and $n \in L^\infty(Q_T)$ be a nonnegative solution of~\eqref{e:komp}--\eqref{e:noflux} whose initial data satisfies~\eqref{e:x2n}.
  Then for any $t \in (0, T]$,  the limit of $n_t(x)$ exists as $x \to 0^+$.
\end{lemma}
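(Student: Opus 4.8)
The plan is to deduce existence of the limit from a one-sided (Oleinik-type) bound on the spatial slope $\partial_x n$ near $x=0$: a lower bound of the form $\partial_x n_t(x)\ge -c(t)$ for all small $x>0$, with $c(t)<\infty$ for each fixed $t\in(0,T]$. By the uniqueness part of Theorem~\ref{t:exist}, such an $n$ coincides with the regular solution, so it is $C^{2,1}$ in the interior and satisfies the far-field decay \eqref{e:x2nPtwise}--\eqref{e:x2dxnVanish}; I will use these freely. Granting the slope bound, the conclusion is immediate: fix $t$ and a small $x_0$, and set $g(x)\defeq n_t(x)+c(t)\,x$, so that $g'(x)=\partial_x n_t(x)+c(t)\ge 0$ on $(0,x_0)$. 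Thus $g$ is nondecreasing, and since $n$ is bounded on $Q_T$ by hypothesis while $c(t)x_0$ is finite, $g$ is bounded and monotone on $(0,x_0)$, whence $\lim_{x\to0^+}g(x)$ exists (and equals $\inf_{(0,x_0)}g$). Therefore $\lim_{x\to0^+}n_t(x)=\lim_{x\to0^+}\big(g(x)-c(t)x\big)$ exists as well, and everything reduces to the slope bound.

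To obtain that bound I would differentiate the equation in $x$. Expanding the flux in \eqref{e:komp} shows that in the interior $n$ solves $\partial_t n = x^2\partial_x^2 n+(x^2+2n)\partial_x n+(2x-2)n$, and differentiating once more the quantity $p\defeq\partial_x n$ satisfies, on $(0,\infty)\times(0,T)$,
\begin{equation}
  \partial_t p = x^2\,\partial_x^2 p+(x^2+2x+2n)\,\partial_x p+2p^2+(4x-2)p+2n .
\end{equation}
The decisive feature is the quadratic source $2p^2\ge 0$, which penalizes large negative values of $p$ and thereby furnishes a lower barrier. Indeed, for $x\le\tfrac12$ and $n\ge0$ one has $2p^2+(4x-2)p+2n\ge 2p^2$ whenever $p<0$, while the spatially constant function $\ubar p(t)\defeq -1/(2t)$ solves $\dot{\ubar p}=2\ubar p^2$. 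Hence $\ubar p$ is a subsolution of the $p$-equation on $(0,\tfrac12)\times(0,T)$, and a comparison argument should yield $p(x,t)\ge -1/(2t)$ there, i.e.\ the desired bound with $c(t)=1/(2t)$.

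The main obstacle is making this comparison rigorous at the two edges of the domain, since the parabolic operator degenerates at $x=0$ and no boundary condition is imposed there. At $x=0$ the diffusion coefficient $x^2$ vanishes, so $x=0$ is a characteristic (outflow) boundary for the $p$-equation and no data should need to be supplied; this must nonetheless be justified, e.g.\ by first establishing the bound on $(\delta,\infty)$ and letting $\delta\to0^+$, or through the vanishing-viscosity/regularization used to construct solutions. In the far field the decay \eqref{e:x2nPtwise}--\eqref{e:x2dxnVanish}, together with the hypothesized global bound $0\le n\le M$, control $p$ as $x\to\infty$ and let the comparison close. The blow-up $\ubar p(t)\to-\infty$ as $t\to0^+$ is precisely what removes any need for a bound on $\partial_x n_0$: near $t=0$ the barrier lies below $p$ automatically, and one propagates the inequality forward by tracking the infimum of $p$ and invoking the equation at an interior spatial minimum. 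I expect this localized maximum-principle argument at the degenerate boundary—rather than the soft monotonicity deduction—to be the technical heart of the proof.
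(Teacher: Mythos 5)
Your overall strategy is exactly the one the paper uses: differentiate the equation to get the PDE for $p=\partial_x n$ (your equation agrees with the paper's \eqref{e:w}), obtain an Oleinik-type lower bound on $p$ by a barrier/comparison argument (Lemma~\ref{l:oleinik}), and then conclude by monotonicity of $n_t(x)+c(t)x$ near $x=0$; that last soft step is correct and is the paper's proof of Lemma~\ref{l:n0exist} verbatim in spirit. The gap is in the comparison step, and it is concrete: your barrier $-1/(2t)$, being spatially constant, is a subsolution only where $(4x-2)\,\ubar p+2n\ge 0$, i.e.\ essentially for $x\le \tfrac12$ (for larger $x$ the term $-(4x-2)/(2t)$ grows linearly while $2n$ is bounded, so the subsolution inequality fails). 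You therefore cannot run the comparison out to the far field, where \eqref{e:x2dxnVanish} would control $p$; and if you instead truncate at $x=\tfrac12$, you create a lateral boundary on which $p(\tfrac12,t)$ is finite but quantitatively uncontrolled --- nothing guarantees $p(\tfrac12,t)\ge -1/(2t)$ --- so the comparison does not close there. The same defect afflicts your proposed left truncation: on $(\delta,\infty)$ no data for $p$ is available at $x=\delta$ either, so ``establish on $(\delta,\infty)$ and let $\delta\to0^+$'' does not by itself yield the bound.

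The paper repairs both edges by changing the barrier, not the domain. It takes $\ubar w=-(\varphi+\psi)$ with $\varphi=\tfrac{\alpha}{2}+\tfrac{5x}{2}+\tfrac{1}{2(t-\delta)}$ and $\psi=\delta e^{30t}/(x-\delta)^4$ as in \eqref{e:z0psiDef}: the linear term $\tfrac{5x}{2}$ and the constant $\tfrac\alpha2$ with $\alpha\ge\sqrt{6\|n\|_{L^\infty(Q_T)}+1}-1$ absorb the bad terms $(4x-2)p$ and $2n$ for \emph{all} $x>0$ (the computation \eqref{e:ole2}), the singular term $\psi$ blows up as $x\to\delta^+$ so that no boundary data is needed at the left edge, and \eqref{e:x2dxnVanish} closes the right boundary at $x=1/\delta$; the shift $t-\delta$ together with interior regularity of $n$ handles the initial slice. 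Two consequences for your write-up: first, the $x$-dependence of the barrier (or some substitute, e.g.\ a term blowing up at $x=\tfrac12$) is not an optional refinement but is what makes the comparison closable; second, the clean bound $c(t)=1/(2t)$ is not what comes out --- the paper's \eqref{e:dxnLower} reads $\partial_x n\ge -\tfrac{1}{2t}-\tfrac{5x}{2}-\tfrac{\alpha}{2}$, so near $x=0$ you get $c(t)=\tfrac{1}{2t}+O(1)$ with a constant depending on $\|n\|_{L^\infty(Q_T)}$, which is still ample for the monotonicity conclusion (the paper uses that $n_t+\bigl(\tfrac\alpha2+\tfrac1{2t}\bigr)x+\tfrac{5x^2}{4}$ is nondecreasing and bounded).
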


We will subsequently use the notation $n_t(0)$ to denote $\lim_{x \to 0^+} n_t(x)$ for every $t > 0$.
The proof of Lemma~\ref{l:n0exist} is presented in Section~\ref{s:construct}.
The key ingredient in the proof is an Oleinik inequality (Lemma~\ref{l:oleinik}, below) establishing explicit negative slope bounds on solutions.
{Such inequalities typically arise in the study of hyperbolic problems.
Even though~\eqref{e:komp} is parabolic, the degeneracy near $x = 0$ makes the system exhibit an ``almost hyperbolic'' behavior, and hence is amenable to analysis using such tools.}

Next, we study convergence of the flux $J(x, n)$ as $n \to 0$.
As we will shortly see, the slope $\partial_x n$ may develop a singularity at $x = 0$.
We don't presently know the exact singularity profile, and hence do not know whether or not for every $t > 0$ we have $x^2 \partial_x n_t(x) \to 0$ as $x \to 0$.
As a result, we don't know whether or not the flux satisfies $\lim_{x \to 0} J(x, n_t) = n_t(0)^2$ for every $t  > 0$.
We claim, however, that we do have the time integrated version $\lim_{x \to 0} \int_s^t J(x, n_\tau) \, d\tau \to \int_s^t n_\tau(0)^2$, and this is our next lemma.

\begin{lemma}[Flux behavior at $x = 0$]\label{l:intJ0}
  For any $0 \leq s \leq t$ we have
  \begin{equation}\label{e:intJ0}
    \lim_{x\to 0^+} \int_s^t J(x,\tau)\,d\tau = \int_s^t n^2(0,\tau)\,d\tau. 
  \end{equation}
  If, further, $0 < s \leq t$, then we have the stronger convergence
  \begin{align}\label{e:a0}
    \lim_{x \to 0} \int_s^t x^2 \abs{ \partial_x n_\tau(x)} \, d\tau =  0 \,,
    \quad\text{and}\quad
    \lim_{x \to 0} \int_s^t \abs{ J(x, n_\tau) -n_\tau^2(0) } \, d\tau = 0 \,.
  \end{align}
\end{lemma}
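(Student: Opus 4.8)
The plan is to start from the conservation law \eqref{e:komp} and integrate it in space and time, isolating the diffusive part $x^2\partial_x n$ of the flux as the only delicate term. First I would fix $0<s\le t$, integrate $\partial_t n=\partial_x J$ over $(a,b)\times(s,t)$, and let $b\to\infty$. The boundary flux at infinity vanishes by \eqref{jr}, and since $n\in C([0,\infty);L^1)$ this yields the identity
\begin{equation*}
  \int_s^t J(a,n_\tau)\,d\tau=-\int_a^\infty\bigl(n(x,t)-n(x,s)\bigr)\,dx .
\end{equation*}
Letting $a\to0^+$, the right-hand side converges by dominated convergence (as $n_\tau\in L^1$) to $N(n_s)-N(n_t)$, so $\lim_{a\to0^+}\int_s^t J(a,n_\tau)\,d\tau$ exists. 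Next, using Lemma~\ref{l:n0exist} (pointwise existence of $n_\tau(0)$) together with the global bound on $n$ from Remark~\ref{r:bounded}, dominated convergence gives $\int_s^t\bigl((a^2-2a)n_\tau+n_\tau^2\bigr)\,d\tau\to\int_s^t n_\tau(0)^2\,d\tau$. Subtracting, the quantity $g(a):=\int_s^t a^2\partial_x n_\tau(a)\,d\tau$ has a finite limit as $a\to0^+$, and everything reduces to showing this limit is $0$.

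This identification is the crux, and I would resolve it by an averaging (Ces\`aro) argument that exploits the weight $x^2$. For $\delta>0$, Fubini and one integration by parts in $x$ give
\begin{equation*}
  \frac1\delta\int_0^\delta g(x)\,dx
  =\int_s^t\frac1\delta\Bigl(\delta^2 n_\tau(\delta)-2\int_0^\delta y\,n_\tau(y)\,dy\Bigr)\,d\tau,
\end{equation*}
where the boundary term at $y=0$ drops out because $y^2 n_\tau(y)\to0$. Using the global $L^\infty$ bound on $n$, the inner expression is $O(\delta)$ uniformly in $\tau$, so the left-hand side tends to $0$ as $\delta\to0^+$. Since $g(a)$ has a limit as $a\to0^+$, that limit must agree with the limit of its averages, namely $0$. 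This proves $\lim_{a\to0^+}\int_s^t a^2\partial_x n_\tau\,d\tau=0$, and hence \eqref{e:intJ0} for $0<s\le t$. The case $s=0$ follows by letting $s\downarrow0$: the tail $\int_0^s J(a,n_\tau)\,d\tau$ is controlled uniformly in $a$ by $\|n_s-n_0\|_{L^1}$ via the same integral identity, while $\int_0^s n_\tau(0)^2\,d\tau\le s\|n\|_{L^\infty}^2\to0$.

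For the stronger statement \eqref{e:a0}, now with $0<s\le t$, I would split $\partial_x n=(\partial_x n)^+-(\partial_x n)^-$ and invoke the negative slope bound of Lemma~\ref{l:oleinik}. The Oleinik inequality provides an explicit one-sided bound on $\partial_x n_\tau$ whose product with $x^2$ vanishes as $x\to0$, integrably in $\tau$ on $[s,t]$; this is exactly where $s>0$ is needed, since the bound degenerates as $\tau\to0$, and it controls one of $x^2(\partial_x n_\tau)^\pm$. The already-established vanishing of $\int_s^t x^2\partial_x n_\tau\,d\tau$ then forces the other signed part to vanish as well, giving $\lim_{x\to0}\int_s^t x^2\abs{\partial_x n_\tau}\,d\tau=0$. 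The final $L^1_\tau$ convergence of the flux is then immediate from the triangle inequality $\abs{J(x,n_\tau)-n_\tau(0)^2}\le x^2\abs{\partial_x n_\tau}+\abs{(x^2-2x)n_\tau}+\abs{n_\tau^2-n_\tau(0)^2}$, the first term handled by the preceding estimate and the last two by dominated convergence.

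The main obstacle is pinning down that the diffusive boundary contribution $\int_s^t x^2\partial_x n_\tau\,d\tau$ actually \emph{vanishes} rather than merely converging to some limit. The averaging identity above is what extracts this, converting the a priori unknown behavior of $\partial_x n$ at the degenerate boundary into a clean $O(\delta)$ estimate through the weight $x^2$ and the $L^1$/$L^\infty$ control on $n$.
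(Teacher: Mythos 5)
Your proposal is correct and follows essentially the same route as the paper: the paper obtains \eqref{e:intJ0} by combining the integrated form of \eqref{e:komp} and \eqref{jr} with the loss formula (Proposition~\ref{p:lossFormula}), whose proof contains exactly your averaging/integration-by-parts device for killing $\int_s^t x^2\partial_x n_\tau\,d\tau$ (averaged over $(\epsilon,2\epsilon)$ rather than $(0,\delta)$), and it proves \eqref{e:a0} from the Oleinik bound via $\abs{\partial_x n}\leq \partial_x n + 2\varphi_\tau$, which is your positive/negative-part split in different notation. The only difference is organizational: you re-derive the loss-formula content inside the lemma (using that $\int_s^t a^2\partial_x n_\tau\,d\tau$ has a limit, so the limit must equal that of its Ces\`aro means), whereas the paper proves Proposition~\ref{p:lossFormula} first and cites it.
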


We are presently unaware whether or not~\eqref{e:a0} holds if $s = 0$.
The proof of Lemma~\ref{l:intJ0} is somewhat indirect.
We first establish a loss formula (Proposition~\ref{p:lossFormula}, below) relating the decrease in total photon number to the outflux of zero-energy photons.
(For clarity of presentation we state Proposition~\ref{p:lossFormula} in next subsection, as it fits better with our results on condensate formation.)
It turns out that the loss formula can be used to prove convergence of the time integrated flux as stated in~\eqref{e:intJ0}.
To obtain the stronger convergence stated in~\eqref{e:a0}, we require the Oleinik inequality (Lemma~\ref{l:oleinik}), and hence require $s > 0$.

The next two results are the two main tools that we will use to study the long time behavior of solutions and photon loss.
The first asserts that the solution operator to~\eqref{e:komp}--\eqref{e:noflux} is a contraction in $L^1$.
The second provides a comparison principle, without requiring a boundary condition at $x = 0$.
Both results are proved in Section~\ref{s:construct}, below.
\begin{lemma}[$L^1$ contraction]\label{l:l1contract}
  Let $n, m$ be two bounded, nonnegative solutions of~\eqref{e:komp}.
  Then for any $0 \leq s \leq t$, we have
  \begin{equation}\label{eqn:contract}
    \norm{n_t - m_t}_{L^1}
      + \int^t_s \abs[\big]{ n^2_\tau (0)-m^2_\tau(0)} \, d\tau
    \leq \norm{n_s - m_s}_{L^1}\,.
  \end{equation}
\end{lemma}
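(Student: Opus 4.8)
The plan is to run a Kruzhkov-type $L^1$ estimate directly on the difference $w \defeq n - m$. Subtracting the two copies of~\eqref{e:komp}, one sees that $w$ solves the \emph{linear} (variable-coefficient) equation $\partial_t w = \partial_x G$, where $G \defeq x^2\partial_x w + (x^2-2x)w + (n+m)w$, using $n^2-m^2=(n+m)w$. Because $w$ satisfies a genuinely linear equation with known bounded coefficients, no doubling of variables is needed: a regularized-sign computation suffices. I would fix a smooth convex approximation $S_\delta$ of $r\mapsto\abs{r}$ with $S_\delta'\to\sgn$, $S_\delta''\ge0$ concentrating near the origin (e.g.\ $S_\delta(r)=\sqrt{r^2+\delta^2}-\delta$), and work first on a truncated interval $(a,R)$ with $0<a<R<\infty$ to stay away from the degeneracy at $x=0$ and from infinity, where Theorem~\ref{t:exist} furnishes classical $C^{2,1}$ regularity.

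Multiplying the equation for $w$ by $S_\delta'(w)$ and integrating by parts over $(a,R)$ gives
\begin{equation*}
  \frac{d}{d\tau}\int_a^R S_\delta(w)\,dx
  = \bigl[\,S_\delta'(w)\,G\,\bigr]_a^R
    - \int_a^R x^2 S_\delta''(w)(\partial_x w)^2\,dx
    - \int_a^R S_\delta''(w)\,w\,\partial_x w\,B\,dx\,,
\end{equation*}
where $B\defeq(x^2-2x)+(n+m)$ collects the transport coefficients. The second term is nonnegative, so I would simply discard it (this is what produces the inequality). For the third, I would write $S_\delta''(w)\,w\,\partial_x w=\partial_x R_\delta(w)$ with $R_\delta(r)\defeq\int_0^r\rho\,S_\delta''(\rho)\,d\rho$, integrate by parts once more, and use $0\le R_\delta\le\delta$ together with $R_\delta(w)\to0$; since $a,R$ are fixed and the coefficients are smooth and bounded on $[a,R]$, this term vanishes as $\delta\to0$. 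Integrating in $\tau$ over $[s,t]$ and letting $\delta\to0$ (dominated convergence in $\tau$, using the uniform bound $\abs{w\,S_\delta''(w)}\le C$) then yields
\begin{equation*}
  \int_a^R\abs{w_t}\,dx-\int_a^R\abs{w_s}\,dx
  \le \int_s^t\sgn(w(R,\tau))\,G(R,\tau)\,d\tau
     -\int_s^t\sgn(w(a,\tau))\,G(a,\tau)\,d\tau\,.
\end{equation*}

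It remains to pass to the limits $R\to\infty$ and then $a\to0$. The term at $x=R$ is dominated by $\int_s^t(\abs{J(R,n_\tau)}+\abs{J(R,m_\tau)})\,d\tau$, which tends to $0$ by the no-flux condition~\eqref{e:noflux} via~\eqref{jr}, while $\int_a^R\abs{w_t}\,dx\to\norm{w_t}_{L^1}$ by monotone convergence as $R\to\infty$ and then $a\to0$. The delicate step, and the main obstacle, is the $x=a$ boundary term as $a\to0$, where the diffusion coefficient degenerates and the nonlinear advective flux $n^2$ produces the loss term. Here I would invoke Lemma~\ref{l:intJ0}: for $s>0$, estimate~\eqref{e:a0} gives $G(a,\cdot)=J(a,n)-J(a,m)\to n^2(0)-m^2(0)$ in $L^1(s,t)$, while Lemma~\ref{l:n0exist} ensures $w(a,\tau)\to n_\tau(0)-m_\tau(0)$ pointwise, so $\sgn(w(a,\tau))\to\sgn(n_\tau(0)-m_\tau(0))$ wherever the limit is nonzero. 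Splitting $\sgn(w(a))\,G(a)$ into $\sgn(w(a))\bigl[G(a)-(n^2(0)-m^2(0))\bigr]$, whose $L^1(s,t)$ norm is bounded by $\norm{G(a)-(n^2(0)-m^2(0))}_{L^1(s,t)}\to0$, plus $\sgn(w(a))(n^2(0)-m^2(0))$, which converges by dominated convergence with the set $\{n_\tau(0)=m_\tau(0)\}$ contributing nothing, and using $n,m\ge0$ so that $\sgn(n(0)-m(0))(n^2(0)-m^2(0))=\abs{n^2(0)-m^2(0)}$, I obtain
\begin{equation*}
  \lim_{a\to0}\int_s^t\sgn(w(a,\tau))\,G(a,\tau)\,d\tau
  = \int_s^t\abs{n_\tau^2(0)-m_\tau^2(0)}\,d\tau\,.
\end{equation*}
Combining the three limits yields~\eqref{eqn:contract} for $0<s\le t$.

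Finally, to reach $s=0$, I would let $s\to0^+$ in the inequality just established: the left-hand term $\norm{n_s-m_s}_{L^1}\to\norm{n_0-m_0}_{L^1}$ by the $C([0,\infty);L^1)$ regularity in Theorem~\ref{t:exist}, and $\int_s^t\abs{n_\tau^2(0)-m_\tau^2(0)}\,d\tau\to\int_0^t\abs{n_\tau^2(0)-m_\tau^2(0)}\,d\tau$ by monotone convergence since the integrand is nonnegative, giving~\eqref{eqn:contract} in full generality. The crux of the whole argument is the passage $a\to0$, precisely where the full strength of Lemma~\ref{l:intJ0} (in particular~\eqref{e:a0}, valid only for $s>0$) is required; the rest is a routine regularized-sign computation legitimized by the classical regularity of $n$ and $m$ on $(0,\infty)^2$.
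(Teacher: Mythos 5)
Your proof is correct, and its skeleton---a regularized convex approximation of the absolute value on a truncated rectangle, discarding the nonnegative diffusion dissipation, passing $R\to\infty$ via~\eqref{jr}, then $a\to0$ via Lemmas~\ref{l:n0exist} and~\ref{l:intJ0}, and finally $s\to0$ by $L^1$-continuity---matches the paper's, which proves the truncated estimate as Lemma~\ref{l:contraction} and then takes the same three limits in the same order. You diverge at the two technical pressure points, legitimately in both cases. For the regularization error you integrate by parts a second time using $R_\delta(r)=\int_0^r\rho\,S_\delta''(\rho)\,d\rho\leq\delta$, which costs you local bounds on $\partial_x(n+m)$; these are available from the $C^{2,1}((0,\infty)^2)$ regularity on the compact set $[a,R]\times[s,t]$ with $s>0$, but the paper avoids needing them by absorbing the cross term into the diffusion via Young's inequality and using only $w^2\sgn_\eps'(w)\leq C\eps$, so it requires nothing beyond $L^\infty$ bounds on $n,m$. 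More substantively, at the $x=a$ boundary term the paper deliberately sidesteps the discontinuity of the sign: it splits $J=J_\lin+n^2$ so that the nonlinear part combines with the sign into the exact algebraic identity $\sgn(n-m)(n^2-m^2)=\abs{n^2-m^2}$ at \emph{every} $a$ (so the sign prefactor never needs to converge), while the linear part $J_\lin(a,\cdot)$ is killed in $L^1(s,t)$ by~\eqref{e:a0}. You instead keep the full flux $G(a,\cdot)\to n^2(0)-m^2(0)$ in $L^1(s,t)$ and prove a.e.\ convergence of $\sgn(w(a,\tau))$ directly, observing that the sign can fail to converge only on the set $\{n_\tau(0)=m_\tau(0)\}$, where the limiting factor $n_\tau^2(0)-m_\tau^2(0)$ vanishes identically, so dominated convergence still applies. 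Both resolutions rest on exactly the same inputs (estimate~\eqref{e:a0}, valid only for $s>0$, together with Lemma~\ref{l:n0exist}); the paper's flux-splitting is slightly more economical, while your version makes explicit precisely where and why the discontinuity of the sign is harmless.
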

\begin{remark*}
  Here $n_\tau(0) = \lim_{x \to 0^+} n_\tau(x)$, which exists by Lemma~\ref{l:n0exist}.
\end{remark*}

\begin{lemma}[Weak comparison principle]\label{l:comparison}
  Let $T > 0$, $m \in L^\infty(Q_T)$ be a nonnegative sub-solution to~\eqref{e:komp}--\eqref{e:noflux}, and $n \in L^\infty(Q_T)$ be a nonnegative super-solution to~\eqref{e:komp}--\eqref{e:noflux}.
  If $m_0 \leq n_0$ then we must have $m_t \leq n_t$ for all $t \in [0, T]$.
\end{lemma}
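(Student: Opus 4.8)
The plan is to run a Kato-type $\sgn_+$ argument on the difference $w \defeq m - n$ and show that $\int_0^\infty w_+\,dx$ is non-increasing in time, starting from the value zero. Subtracting the defining differential inequalities for a sub- and a super-solution gives
\begin{equation*}
  \partial_t w \le \partial_x \Jdif\,, \qquad
  \Jdif \defeq J(x,m) - J(x,n) = x^2 \partial_x w + (x^2 - 2x)\,w + (m+n)\,w\,,
\end{equation*}
where I have used the factorization $m^2 - n^2 = (m+n)w$ so that the entire right-hand side is linear in $w$ with coefficients bounded on $Q_T$. I would test this inequality against a smooth non-decreasing approximation $p_\delta$ of $\mathbf 1_{\{w>0\}}$ (with $p_\delta'\ge 0$ supported in $(0,\delta)$) on the truncated domain $(\eta,R)$, and then pass to the limits in a carefully chosen order.

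Multiplying by $p_\delta(w)$ and integrating over $(\eta,R)$, write $b \defeq x^2 - 2x + m + n$. After integrating by parts, the diffusive contribution produces $-\int x^2 (\partial_x w)^2 p_\delta'(w)\,dx \le 0$, which I discard. The advective contribution equals $\int b\,\partial_x \Phi_\delta(w)\,dx$ with $\Phi_\delta(r) \defeq \int_0^r s\,p_\delta'(s)\,ds$; integrating by parts once more and using $\abs{\Phi_\delta} \le \delta$ everywhere, both the resulting boundary terms and the bulk term $\int \partial_x b\,\Phi_\delta(w)\,dx$ vanish as $\delta \to 0$ on the fixed domain $[\eta,R]$. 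Letting $\delta \to 0$ (so $p_\delta(w)\to\mathbf 1_{\{w>0\}}$ and the primitive of $p_\delta$ tends to $w_+$) and integrating over $[s,t]$ then yields
\begin{equation*}
  \int_\eta^R w_+(t)\,dx - \int_\eta^R w_+(s)\,dx
    \le \int_s^t \bigl[\Jdif\,\mathbf 1_{\{w>0\}}\bigr]_{x=\eta}^{x=R}\,d\tau\,.
\end{equation*}
The boundary term at $x = R$ is killed by the no-flux condition \eqref{e:noflux}, exactly as in \eqref{jr}, upon sending $R \to \infty$.

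The crux is the boundary term at $x = \eta$ as $\eta \to 0$, and this is where the degeneracy forces $s>0$. Here I would invoke Lemma~\ref{l:intJ0}, specifically \eqref{e:a0} applied to both $m$ and $n$, which guarantees that $\int_s^t \eta^2\,\abs{\partial_x w_\tau(\eta)}\,d\tau \to 0$; thus the singular diffusive part of $\Jdif$ disappears in the time-integrated limit and only the advective part survives. Since $w_\tau$ extends continuously to $x=0$ by Lemma~\ref{l:n0exist}, dominated convergence gives
\begin{equation*}
  \lim_{\eta \to 0}\int_s^t \Jdif(\eta,\tau)\,\mathbf 1_{\{w(\eta,\tau)>0\}}\,d\tau
    = \int_s^t \bigl(m_\tau(0) + n_\tau(0)\bigr)\,\bigl(w_\tau(0)\bigr)_+\,d\tau \ge 0 \,.
\end{equation*}
This is the favorable-sign observation that drives the argument: wherever the sub-solution exceeds the super-solution near zero, the excess outflux $m_\tau(0)^2 - n_\tau(0)^2 \ge 0$ can only remove positive mass of $w$. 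Consequently $\int_0^\infty w_+(t)\,dx \le \int_0^\infty w_+(s)\,dx$ for every $0 < s \le t$, and letting $s \to 0^+$ using $C([0,\infty);L^1)$ continuity together with $w_+(0)=(m_0-n_0)_+=0$ gives $\int_0^\infty w_+(t)\,dx \le 0$, whence $m_t \le n_t$.

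The step I expect to be the main obstacle is precisely this $x=0$ boundary analysis: because $\partial_x w$ may be singular at the origin, the diffusive flux cannot be controlled pointwise, so one must lean on the time-integrated vanishing \eqref{e:a0}, available only for $s>0$, which is what necessitates the final $s\to 0$ passage and the appeal to $L^1$-continuity in time. A secondary technical point is justifying the interchange of the $\eta\to0$ limit with the time integral in the presence of the indicator $\mathbf 1_{\{w>0\}}$, which I would handle via the pointwise convergence $\bigl(w_\tau(\eta)\bigr)_+ \to \bigl(w_\tau(0)\bigr)_+$. One must also ensure the adopted notion of sub-/super-solution carries enough regularity (interior $C^{2,1}$, continuity at $x=0$, and $L^1$-continuity in time) for these manipulations; I would either build this into the definition or recover it as for genuine solutions in Theorem~\ref{t:exist}.
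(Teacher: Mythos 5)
Your core mechanism is exactly the paper's: a Kato--Kruzhkov regularized sign argument on the difference (you track $\int (m-n)_+$, the paper tracks $\int (n-m)^-$, which is the same quantity), with the same order of limits (regularization parameter first on a fixed truncated domain $(\eta,R)$, so the degenerate diffusion is uniformly elliptic there; then $\eta\to0$, $R\to\infty$; then $s\to0$ via $L^1$-continuity), the same favorable boundary sign $\bigl(m_\tau(0)+n_\tau(0)\bigr)\,w_\tau(0)_+\ge 0$ at the origin, and comparable control of the cross terms (your $\Phi_\delta$ primitive trick is, if anything, slightly cleaner than the paper's Young-inequality estimate with error $O(\eps)/r^2$). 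The one genuine gap is how you control the boundary terms: you invoke Lemma~\ref{l:intJ0} (specifically~\eqref{e:a0}) ``applied to both $m$ and $n$,'' and~\eqref{e:noflux}/\eqref{jr} at $x=R$. These are theorems about \emph{solutions} of~\eqref{e:komp}--\eqref{e:noflux}: \eqref{e:a0} is derived from the loss formula (Proposition~\ref{p:lossFormula}) together with the Oleinik inequality (Lemma~\ref{l:oleinik}), whose proof differentiates the \emph{equation} and runs a minimum principle --- for a mere sub-solution the resulting differential inequality for $\partial_x m$ points the wrong way, so no Oleinik bound, hence no~\eqref{e:a0}, is available. This is not a pedantic point: the comparison lemma is applied throughout the paper to explicit non-solutions with corners --- $q_t(x)=(a_t-b_tx)_+$ in Proposition~\ref{p:persistence}, the stationary barriers $S_\gamma$ and $Z$, the parabolic arcs in Proposition~\ref{p:formation} --- for which Lemma~\ref{l:intJ0} simply does not apply. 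Similarly, at $x=R$ a sub/super-solution satisfies only the one-sided time-integrated flux conditions, not~\eqref{jr}.

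The paper's resolution, which your closing hedge half-identifies, is to make precisely these boundary controls part of the \emph{definition} of sub- and super-solution (Definition~\ref{d:subsol}): continuity at $x=0$ together with $\lim_{x\to0^+}\int_s^t \abs{x^2\partial_x n_\tau}\,d\tau=0$ (condition~\eqref{e:vanish0}), $n_t\to n_0$ in $L^1$, and $\limsup_{x\to\infty}\int_s^t J(x,m_\tau)\,d\tau\le 0$, $\liminf_{x\to\infty}\int_s^t J(x,n_\tau)\,d\tau\ge 0$ replacing~\eqref{e:noflux}. Of your two proposed repairs, only ``build it into the definition'' works; ``recover it as for genuine solutions'' is not an option for the reason above. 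One further small caution at $R\to\infty$: because the indicator $\one_{\{w>0\}}$ sits inside the time integral, you cannot simply split $\int_s^t \Jdif(R,\tau)\one_{\{w>0\}}\,d\tau$ into the two one-sided flux conditions; in the paper's main applications one of $m,n$ is a genuine solution (so its flux at $R$ vanishes in $L^1_t$ by~\eqref{e:x2nPtwise}--\eqref{e:x2dxnVanish}) and the other is an explicit barrier whose flux at infinity has a pointwise sign, which is what makes this step harmless. With the definitional conditions in place, your argument is correct and coincides with the paper's proof.
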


We reiterate that our comparison principle \emph{does not} require the assumption $m \leq n$ at the boundary~$x = 0$.
Instead, it provides as a conclusion that $m_t(x) \leq n_t(x)$ for every $t > 0$ and $x \geq 0$, including at $x = 0$.
The notion of sub and super-solutions used in Lemma~\ref{l:comparison} are defined precisely in Definition~\ref{d:subsol}, below.

\subsection{Photon Loss, and Condensate Formation.}

We now turn to results concerning loss of photons, 
which corresponds to Bose--Einstein condensation at the level of approximation that the Kompaneets equation represents.
Throughout this section we will assume $n_0$ is a nonnegative bounded function satisfying~\eqref{e:x2n}, and $n$ is the global solution to~\eqref{e:komp}--\eqref{e:noflux} with initial data $n_0$.
Our first result is an explicit formula for the total photon number that was mentioned earlier.

\begin{proposition}[Loss formula]\label{p:lossFormula}
  Whenever $0 \leq s \leq t$ we have
  \begin{equation}\label{e:loss}
    N(n_t)  + \int_s^t n_\tau(0)^2 \, d\tau = N(n_s)\,.
  \end{equation}
\end{proposition}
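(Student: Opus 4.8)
The plan is to derive \eqref{e:loss} by integrating the conservation law \eqref{e:komp} over a truncated region $(\eps,R)\times(s,t)$ and then letting $R\to\infty$ and $\eps\to 0$. First I would fix $0<s\le t$ and, using the interior regularity $n\in C^{2,1}((0,\infty)^2)$, integrate $\Dt n=\Dx J$ over $(\eps,R)\times(s,t)$. Fubini and the fundamental theorem of calculus give
\begin{equation*}
  \int_\eps^R \paren[\big]{n_t(x)-n_s(x)}\,dx
    = \int_s^t J(R,n_\tau)\,d\tau - \int_s^t J(\eps,n_\tau)\,d\tau\,.
\end{equation*}
Letting $R\to\infty$ eliminates the flux at infinity by \eqref{jr} (this is exactly where the hypothesis $s>0$ enters), and since $n_t,n_s\in L^1$ the left side converges, leaving $\int_\eps^\infty (n_t-n_s)\,dx = -\int_s^t J(\eps,n_\tau)\,d\tau$.

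The hard part is the limit $\eps\to 0$ of the right side. Writing $J(\eps,n_\tau)=\eps^2\Dx n_\tau(\eps)+(\eps^2-2\eps)n_\tau(\eps)+n_\tau(\eps)^2$, the advective terms are harmless, since $(\eps^2-2\eps)n_\tau(\eps)\to 0$ and $n_\tau(\eps)^2\to n_\tau(0)^2$ by Lemma~\ref{l:n0exist}. The diffusion term $\eps^2\Dx n_\tau(\eps)$ is the genuine obstacle, because we have no pointwise control on $\Dx n_\tau$ as $\eps\to 0$. The key idea is to avoid the pointwise limit and instead average in $\eps$: for $\delta>0$ I would integrate the identity over $\eps\in(0,\delta)$ and divide by $\delta$. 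Integrating by parts in $\eps$ (on $(\eta,\delta)$ and letting $\eta\to 0$, using $\eps^2 n_\tau(\eps)\to 0$) turns the diffusion contribution into
\begin{equation*}
  \frac{1}{\delta}\int_0^\delta \eps^2\Dx n_\tau(\eps)\,d\eps
    = \delta\,n_\tau(\delta) - \frac{2}{\delta}\int_0^\delta \eps\,n_\tau(\eps)\,d\eps\,,
\end{equation*}
and both terms on the right are $O(\delta\,\norm{n}_{L^\infty})$, hence vanish as $\delta\to 0$; the averaged advective terms converge to $n_\tau(0)^2$ by continuity at the origin.

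It then remains to pass these averaged limits under the $\tau$-integral and on the left. Since $n$ is bounded on $[s,t]$ by \eqref{e:nSpace}, the averaged integrand $\frac1\delta\int_0^\delta J(\eps,n_\tau)\,d\eps$ is bounded uniformly in $\delta$ and $\tau$, so dominated convergence gives $\frac1\delta\int_0^\delta\!\int_s^t J(\eps,n_\tau)\,d\tau\,d\eps\to\int_s^t n_\tau(0)^2\,d\tau$; on the left, the map $\eps\mapsto\int_\eps^\infty(n_t-n_s)\,dx$ is continuous with value $N(n_t)-N(n_s)$ at $\eps=0$, so its $\delta$-average converges to the same. This establishes \eqref{e:loss} for $0<s\le t$. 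To include $s=0$, I would apply the result with $s$ replaced by $s'\in(0,t)$ and let $s'\to 0^+$: the continuity $n\in C([0,\infty);L^1)$ gives $N(n_{s'})\to N(n_0)$, while monotone convergence (the integrand is nonnegative) handles $\int_{s'}^t n_\tau(0)^2\,d\tau\to\int_0^t n_\tau(0)^2\,d\tau$, and finiteness of the right-hand side shows the limit is finite. The whole argument is engineered so as never to invoke the stronger flux statement of Lemma~\ref{l:intJ0}, which is proved afterward and would otherwise create a circularity.
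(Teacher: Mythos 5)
Your proposal is correct and takes essentially the same route as the paper's proof: integrate the conservation law over a truncated rectangle, kill the flux at infinity via \eqref{jr}, neutralize the uncontrolled diffusion term $x^2\partial_x n$ by averaging near $x=0$ and integrating by parts in the spatial variable, and conclude with Lemma~\ref{l:n0exist}, dominated convergence, and the fundamental theorem of calculus on the left-hand side. The only differences are minor: the paper averages over the window $(\eps,2\eps)$, which keeps the integrand continuous on a compact set so the $\eps$--$\tau$ interchange is automatic, whereas your $(0,\delta)$-average requires a one-line Fubini justification of the absolute integrability of $\eps^2\partial_x n_\tau(\eps)$ near $\eps=0$ (available from the Oleinik bound $\partial_x n_\tau\ge-\varphi_\tau$ of Lemma~\ref{l:oleinik}, already proved at this stage), and your explicit limiting treatment of the case $s=0$ makes precise a step the paper leaves implicit.
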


As a result the total photon number can only decrease with time, and can only decrease through an outflux of zero-energy photons.
In fact, equation~\eqref{e:loss} shows that the change in the total photon number between time $0$ and $t$ is exactly $\int_0^t n_s(0)^2 \, ds$, and thus we may interpret $\int_0^t n_s(0)^2 \, ds$ as the mass of the Bose--Einstein condensate at time $t$.
Notice that since $n_s(0)^2$ is manifestly nonnegative, the total photon number can never increase through an influx at $x=0$.
That is, according to Kompaneets dynamics, photons may enter the Bose--Einstein condensate, but can not leave it.

In addition to the above physical interpretation, Proposition~\ref{p:lossFormula} is essential to obtaining the behavior of the flux at $x = 0$ (Lemma~\ref{l:intJ0}, above).
Thus we prove Proposition~\ref{p:lossFormula} in Section~\ref{s:construct}, before Lemma~\ref{l:intJ0}.
\medskip

Next we show that once a photon outflux at $x = 0$ starts, it will never stop.
Thus once the Bose--Einstein condensate forms, its mass will always strictly increase with time.
\begin{proposition}[Persistence]\label{p:persistence}
  There exists $t_*\in [0,\infty]$ such that $n_t(0)>0$ whenever $t> t_*$, and $n_t(0)=0$ whenever $0 < t<t_*$.
\end{proposition}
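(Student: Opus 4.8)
The plan is to reduce the statement to a forward-invariance property and then establish that property by comparison with an explicit subsolution. First I set $t_* \defeq \inf\{t > 0 : n_t(0) > 0\}$, with the convention $\inf\emptyset = +\infty$. For $0 < t < t_*$ we then have $n_t(0) = 0$ directly from the definition of the infimum, so the entire proposition follows once I show the set $\{t > 0 : n_t(0) > 0\}$ is forward invariant, namely
\begin{equation*}
  n_{t_0}(0) > 0 \quad\Longrightarrow\quad n_t(0) > 0 \ \text{ for all } t > t_0 .
\end{equation*}
Granting this, any $t > t_*$ exceeds some $t_0$ with $n_{t_0}(0) > 0$, whence $n_t(0) > 0$.

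To prove forward invariance I would construct a subsolution that lies below $n$ at time $t_0$ and is strictly positive at $x=0$ for all later times, and then invoke the comparison principle (Lemma~\ref{l:comparison}). Writing the flux divergence out, \eqref{e:komp} reads $\partial_t n = x^2 \partial_x^2 n + (x^2 + 2n)\partial_x n + (2x - 2)n$. The obstruction to a naive construction is that this equation admits no \emph{stationary} subsolution that is positive at $x = 0$: at the boundary the subsolution inequality formally forces $\partial_t \underline n(0) \le 2\underline n(0)(\partial_x \underline n(0) - 1)$, and together with the no-flux condition at infinity this is incompatible with a time-independent positive boundary value, since the nonlinear outflux $n^2$ drains it. (This is consistent with $\hat n_\mu(0) = 0$ and with the expectation $n_t(0) \to 0$ as $t\to\infty$.) The remedy is to build exponential time decay into the barrier so as to create the needed slack.

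Concretely I would take $\underline n(x,t) = \epsilon\, e^{-C(t - t_0)}\psi(x)$, where $\psi \in C^\infty$ is a fixed profile with $\psi(0) = 1$, $\psi'(0) = 0$, $\psi > 0$ on $[0,R)$, supported in $[0,R]$, and meeting zero tangentially and convexly at $x = R$ (so $\psi(R) = \psi'(R) = 0$ and $\psi'' \ge 0$ near $R$). After dividing by $\epsilon\, e^{-C(t-t_0)} > 0$, the subsolution inequality $\partial_t \underline n \le x^2\partial_x^2\underline n + (x^2 + 2\underline n)\partial_x \underline n + (2x-2)\underline n$ reduces to
\begin{equation*}
  x^2\psi'' + x^2\psi' + (2x - 2 + C)\psi + 2\epsilon\, e^{-C(t-t_0)}\psi\psi' \ge 0 .
\end{equation*}
As $x \to 0^+$ the left side tends to $C - 2$, which is nonnegative once $C \ge 2$; on any compact subset of $(0,R)$, where $\psi$ is bounded below, the term $(2x-2+C)\psi$ dominates all others for $C$ large; and near $x = R$ the tangential convex vanishing makes $x^2\psi'' \ge 0$ the leading term while the remaining terms are $O(\psi) = o(1)$. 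Hence for $C$ sufficiently large $\underline n$ is a genuine subsolution, and being compactly supported it satisfies \eqref{e:noflux} automatically.

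It then remains to arrange $\underline n_{t_0} \le n_{t_0}$. Since $n_{t_0}(0) = c > 0$, continuity at the boundary (Lemma~\ref{l:n0exist}) gives $\delta > 0$ with $n_{t_0} \ge c/2$ on $[0,\delta]$; choosing $R \le \delta$ and $\epsilon \le (c/2)/\max\psi$ yields $\underline n_{t_0} = \epsilon\psi \le n_{t_0}$ on $[0,R]$ and $\underline n_{t_0} = 0 \le n_{t_0}$ elsewhere. The comparison principle (Lemma~\ref{l:comparison}) then gives $n_t \ge \underline n_t$ on $[t_0,T]$ for every $T$, and letting $x \to 0^+$ we obtain $n_t(0) \ge \underline n_t(0) = \epsilon\, e^{-C(t-t_0)} > 0$ for all $t \ge t_0$, which is exactly the forward invariance claimed. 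The main difficulty is the nonexistence of a positive stationary barrier at the degenerate boundary, which is what forces the time-dependent construction above; the delicate points are verifying the subsolution inequality uniformly up to $x = 0$ and at the cutoff $x = R$, and checking that $\underline n$ meets the hypotheses of Lemma~\ref{l:comparison} (in particular that the notion of subsolution in Definition~\ref{d:subsol} tolerates the $C^1$ matching at $x = R$ and requires no condition at $x = 0$).
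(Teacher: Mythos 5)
Your proof is correct in substance and takes a route that differs from the paper's in its key construction, though both share the same skeleton. The paper also sets $t_* = \inf\set{t > 0 \st n_t(0) > 0}$ and reduces everything to forward invariance; but where you build a separated barrier $\epsilon\, e^{-C(t-t_0)}\psi(x)$ under $n_{t_0}$ using only continuity at the boundary (Lemma~\ref{l:n0exist}), the paper uses the Oleinik inequality (Lemma~\ref{l:oleinik}) to convert $n_T(0) > 0$ into a wedge $n_T(x) \geq (a_T - b_T x)_+$ and then evolves the truncated linear profile $q_t(x) = (a_t - b_t x)_+$ by the exactly solvable ODE system $\partial_t a = -2a(1+b)$, $\partial_t b = a - 2b(1+b)$, for which $\partial_t q - \partial_x J(x, q) = 3x(b_t x - a_t) \leq 0$ on the support, with the corner condition~\eqref{e:cornerSubSol} at $\hat x_t = a_t/b_t$; comparison then gives $n_t(0) \geq a_t > 0$. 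Your structural observation---that the outflux term forces $\partial_t \underline n(0) \leq 2 \underline n(0)(\partial_x \underline n(0) - 1)$ at the boundary, so no positive stationary barrier exists and exponential time decay must be built in---is exactly the mechanism behind the paper's decaying $a_t$. The paper's route buys an explicit, quantitative lower bound $n_t(0) \geq a_t$ with a computable decay rate; yours buys a globally smooth barrier (no corner to check) at the cost of a non-explicit large constant $C$.

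Two details in your verification need tightening, both fixable. First, take $C > 2$ (or simply ``$C$ large''): the limit value $C - 2 \geq 0$ at $x = 0^+$ does not by itself give the inequality on a punctured neighborhood when $C = 2$. Second, your near-$R$ justification ``the remaining terms are $O(\psi)$'' is false as literally stated: for a smooth compactly supported profile, $x^2 \psi'$ is generically much larger than $\psi$ near $R$ (for the standard bump tail $\psi = e^{-1/(R-x)}$ one has $\psi' \sim -\psi/(R-x)^2$), and tangential convexity alone does not guarantee the inequality pointwise---a convex tail whose $\psi''$ vanishes along a sequence approaching $R$ can fail at those points, since there $(2x - 2 + C)\psi$ need not dominate $x^2\abs{\psi'}$. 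What saves the argument is fixing an explicit tail and checking that $x^2\psi''$ dominates all the negative terms: e.g.\ with $\psi = e^{-1/u}$, $u = R - x$, one computes $x^2(\psi'' + \psi') = x^2 u^{-4}(1 - 2u - u^2)e^{-1/u} > 0$ for small $u$, while $\abs{2\lambda \psi \psi'} = 2\lambda u^{-2} e^{-2/u}$ is negligible against it and $(2x - 2 + C)\psi \geq 0$; the tail $(R-x)_+^3$ works similarly. With that repair, your barrier satisfies Definition~\ref{d:subsol} (it is smooth, compactly supported, so~\eqref{e:vanish0} and the flux condition at infinity are trivial), and the application of Lemma~\ref{l:comparison} closes the argument exactly as you say.
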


Due to Proposition~\ref{p:lossFormula}, $t_*$ is the time of onset of photon loss.
Of course there are situations (such as Corollary~\ref{c:mulim}, in the case where $n_0 \leq \hat n_0$) when $t_* = \infty$, and 
photon loss never occurs.
There are, however, a few scenarios under which one can prove $t_* < \infty$, so photon loss begins in finite time.
One scenario was constructed by Escobedo et\ al.\ in~\cite{EscobedoHerreroEA98}, where the solution develops a ``viscous shock'' (see Figure~\ref{f:formationEHV}).
Namely, for any $t_*, c_* > 0$, they produce solutions~$n$ such that
\begin{equation*}
  \lim_{t \to t_*^-} n_t(c (t_* - t)) = c_*
  \quad\text{if } c > c_*\,,
  \qquad\text{and}\qquad
  \lim_{t \to t_*^-} n_t(c (t_* - t)) = 0
  \quad\text{if } c < c_*\,.
\end{equation*}
For this solution $n_t(0)$ has a jump discontinuity at $t = t_*$, and $N(n_t)$ has a corner at $t = t_*$ (see Figure~\ref{f:formationEHV}).
Escobedo et al.\ produce such solutions with $N(n_t)$ arbitrarily small, showing this scenario is not related to an excess of 
photon number above the maximum equilibrium value $N(\hat n_0)$.
\begin{figure}[htb]
  \setkeys{Gin}{width=.3\linewidth}
  \includegraphics{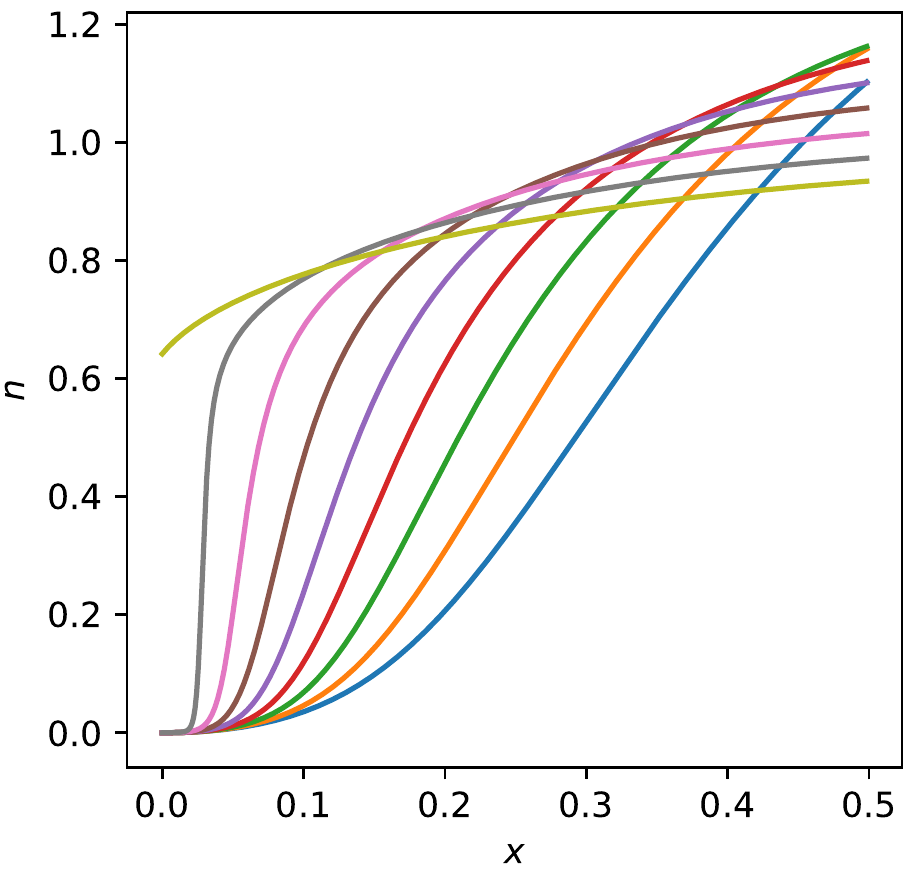}
  \quad
  \includegraphics{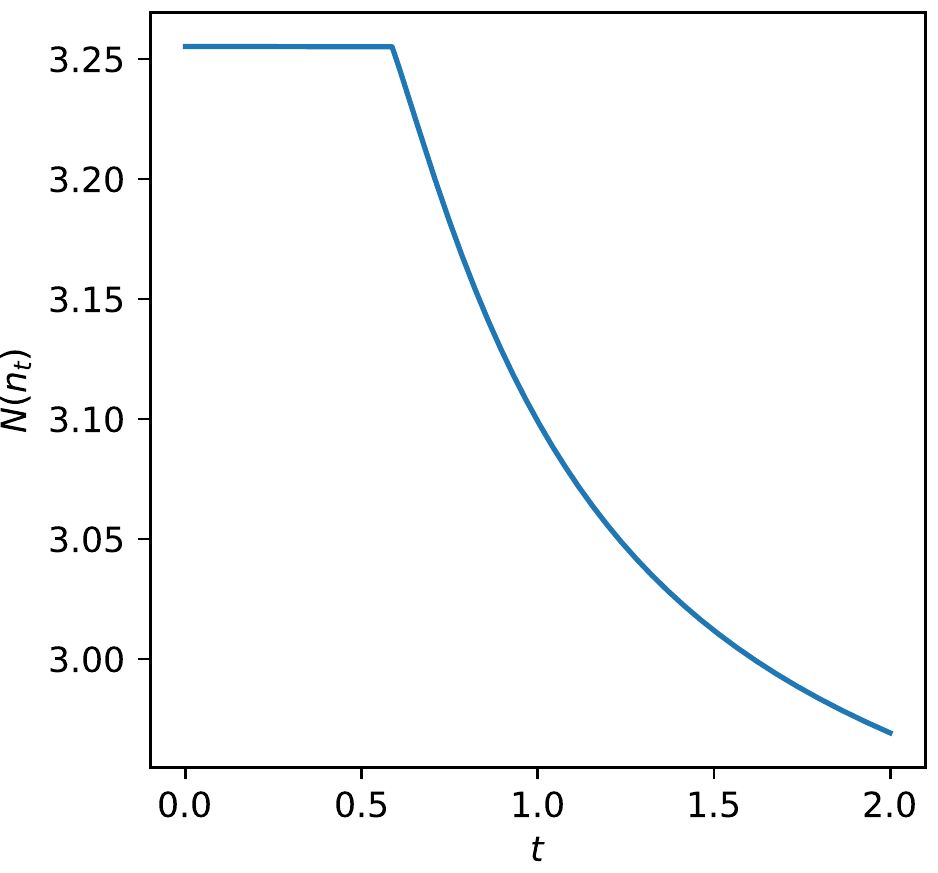}
  \quad
  \includegraphics{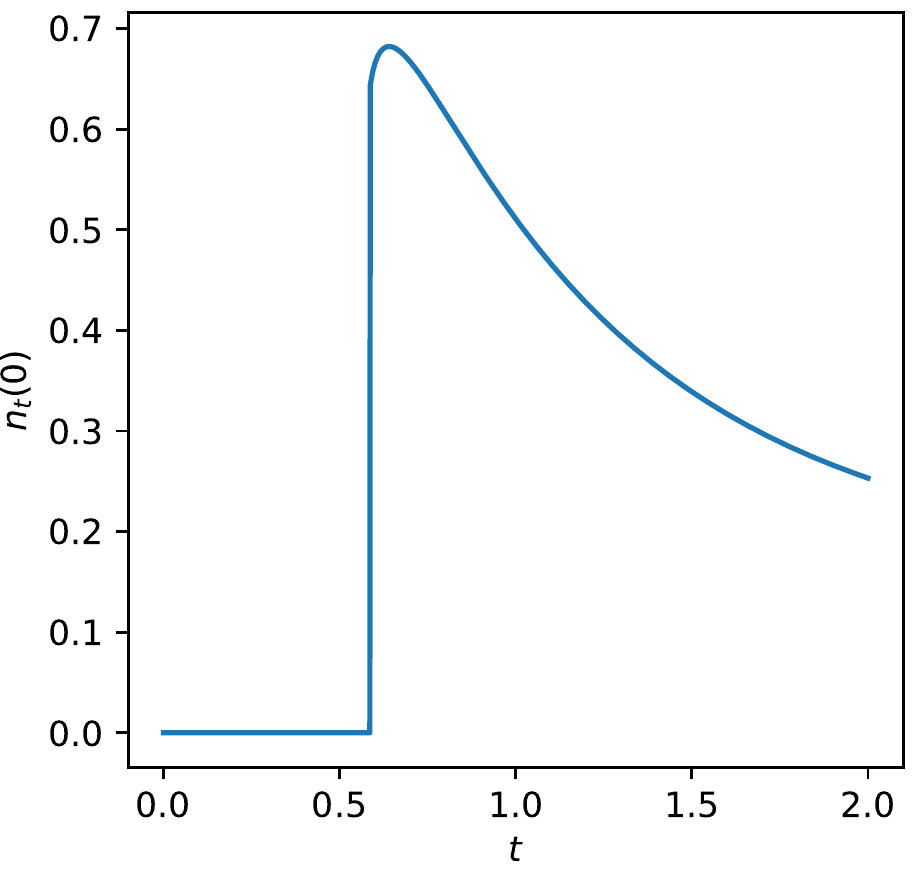}
  \caption{%
    \footnotesize
    Numerical simulations showing the onset of photon loss 
    through a ``viscous shock'' \`a la Escobedo et\ al.~\cite{EscobedoHerreroEA98}.
    Left: Profile of the solution $n_t(x)$ vs $x$ at various times close to $t_*$.
    Center: Total photon number $N(n_t)$ vs $t$ showing a corner at $t = t_*$.
    Right: Photon outflux at $x = 0$ vs $t$, showing a jump at $t = t_*$.
    (The numerical method used to generate these plots is described in Appendix~\ref{s:nmethod}, and the parameters used are listed in Remark~\ref{r:params}.)
  }
  \label{f:formationEHV}
\end{figure}

There are two other scenarios under which one can prove $t_* < \infty$.
The first is a mass condition that guarantees $t_* < \infty$ if the initial photon number larger than that the maximum photon number can be sustained in equilibrium.
While this is the natural physically expected behavior, it was not proved rigorously before.
We prove it here as a consequence of our long time convergence result (Theorem~\ref{t:lim}).

\begin{figure}[htb]
  \setkeys{Gin}{width=.3\linewidth}
  \includegraphics{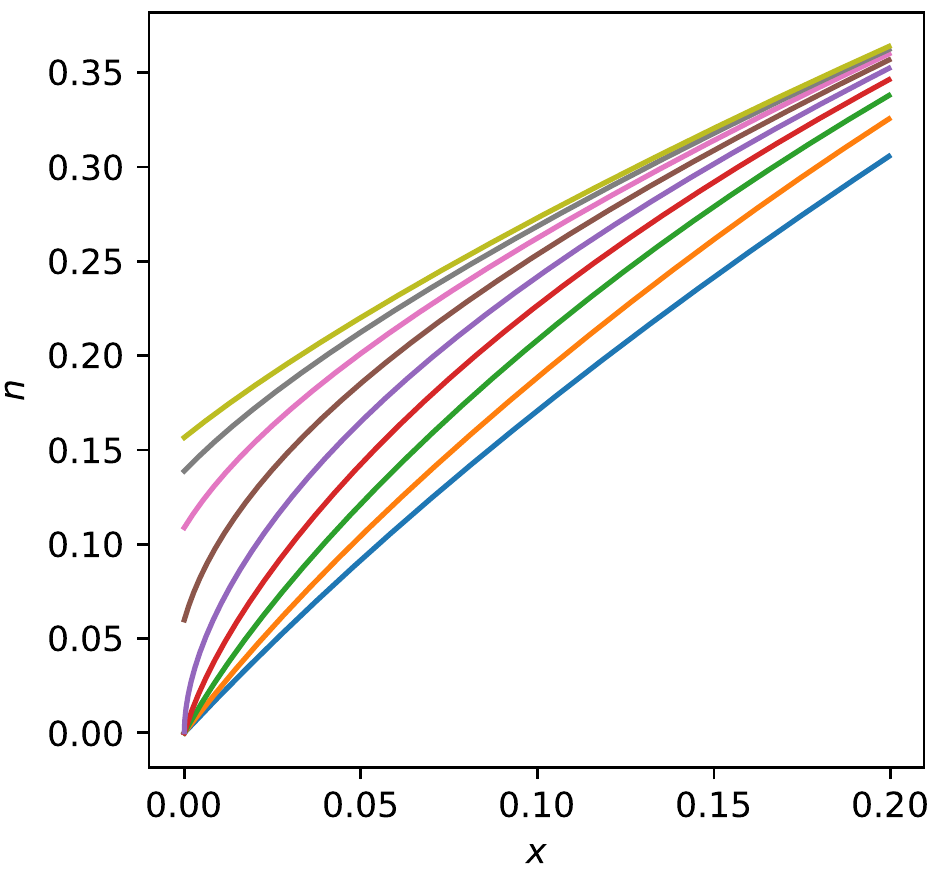}
  \quad
  \includegraphics{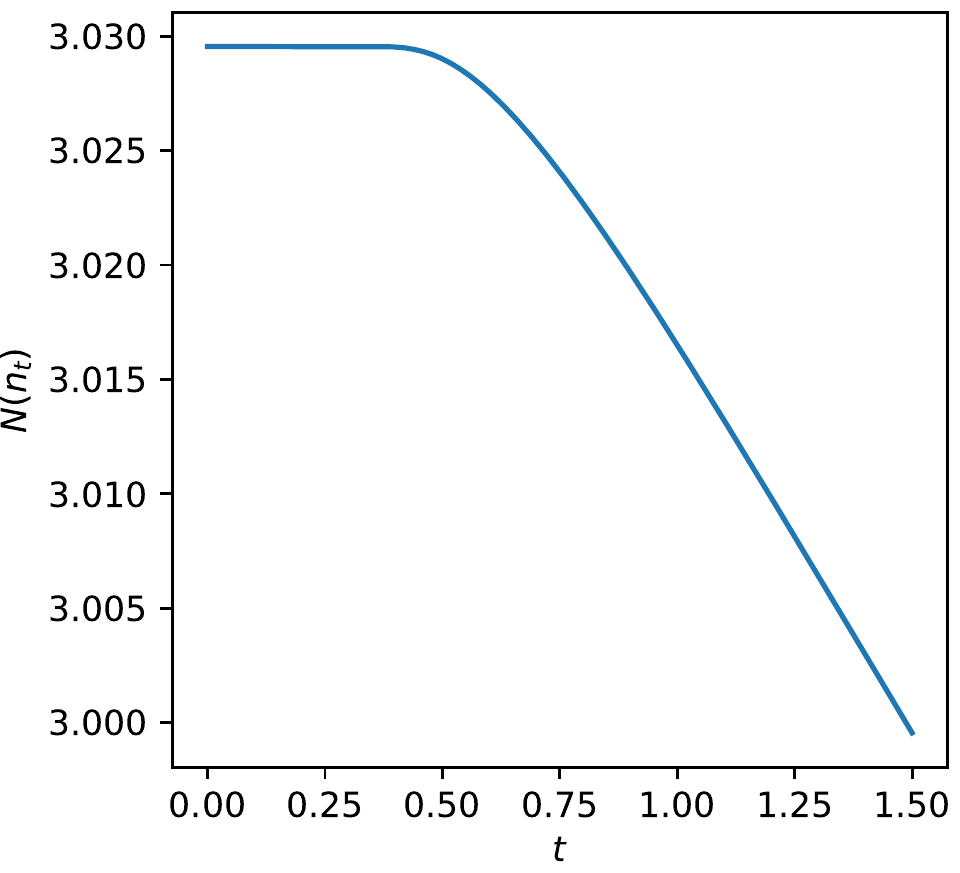}
  \quad
  \includegraphics{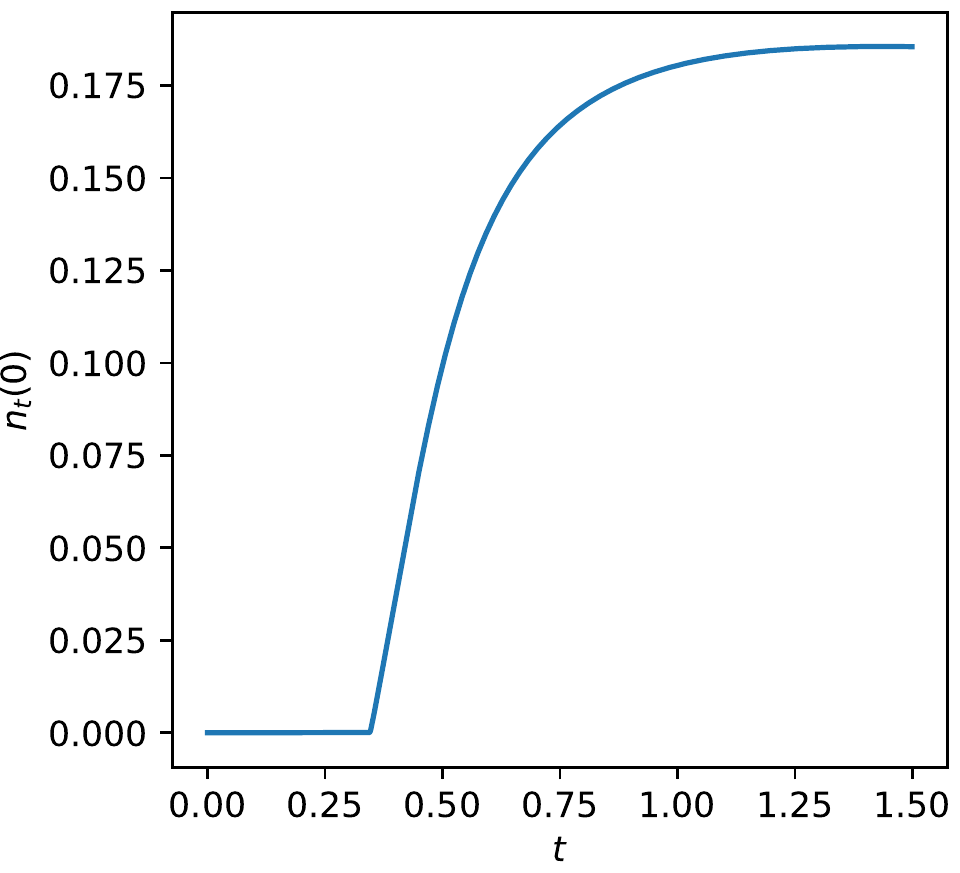}
  \caption{%
    \footnotesize
    Numerical simulations showing the onset of photon loss through the slope condition in Proposition~\ref{p:formation}.
    Left: Profile of the solution $n_t(x)$ vs $x$ at various times close to $t_*$.
    Center: Total photon number $N(n_t)$ vs $t$ showing a~$C^1$ transition at $t = t_*$.
    Right: Photon outflux at $x = 0$ vs $t$, showing a corner at $t = t_*$.
    (The numerical method used to generate these plots is described in Appendix~\ref{s:nmethod}, and the parameters used are listed in Remark~\ref{r:params}.)
  }
  \label{f:formationSlope}
\end{figure}
The second scenario we provide here is a slope condition that guarantees $t_* < \infty$, provided $\partial_x n_0 > 1$ (see Figure~\ref{f:formationSlope}).
To the best of our knowledge, this scenario hasn't been identified before.
In this case, in contrast to the viscous shock of Escobedo et\ al.~\cite{EscobedoHerreroEA98}, the photon outflux at $0$ can be continuous but not differentiable  in time at $t = t_*$.
Moreover, $\partial_x n_t(0) \to \infty$ as $t \to t_*^-$.

\begin{proposition}[Onset of loss]\label{p:formation}
  Let $t_*$ be the time given by Proposition~\ref{p:persistence}.
  \begin{enumerate}\reqnomode
    \item
      \emph{(Mass condition)} If $N(n_0) > N(\hat n_0)$ then $t_* < \infty$.
    \item
      \emph{(Slope condition)}
      If $\partial_x n_0(0)>1$, then $t_* \leq \bar t_*$, where
      \begin{equation}\label{e:tStarRiccati}
	\bar t_* \defeq \frac{1}{2} \ln\paren[\Big]{ \frac{\partial_x n_0(0)}{\partial_x n_0(0) -1} } \,. 
      \end{equation}
      Moreover, there exists $n_0$ for which $t_* = \bar t_*$ and the photon outflux $n_t(0)$ is continuous at $t = t_*$.
  \end{enumerate}
\end{proposition}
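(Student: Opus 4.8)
This is a short consequence of the long-time convergence result Theorem~\ref{t:lim} together with persistence. Theorem~\ref{t:lim} supplies $\mu\in[0,\infty)$ with $n_t\to\hat n_\mu$ and $\int_0^\infty n_t(0)^2\,dt = N(n_0)-N(\hat n_\mu)$. Since $N(\hat n_\mu)\le N(\hat n_0)$ for every $\mu\ge0$, the hypothesis $N(n_0)>N(\hat n_0)$ forces
\[
  \int_0^\infty n_t(0)^2\,dt \;\ge\; N(n_0)-N(\hat n_0) \;>\;0 ,
\]
so $n_{t_1}(0)>0$ for some finite $t_1$, and Proposition~\ref{p:persistence} then gives $t_*\le t_1<\infty$.

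\textbf{Part (2): the mechanism.} For the slope condition the driving mechanism is a Riccati equation for the boundary slope. If $t_*=0$ there is nothing to prove, so assume $t_*>0$; then $n_t(0)=0$ for $0<t<t_*$ by Proposition~\ref{p:persistence}. Writing \eqref{e:komp} in non-divergence form,
\[
  \partial_t n = x^2\,\partial_x^2 n + (x^2+2n)\,\partial_x n + (2x-2)\,n ,
\]
and differentiating in $x$, the slope $u=\partial_x n$ satisfies
\[
  \partial_t u = x^2\,\partial_x^2 u + (x^2+2x+2n)\,\partial_x u + 2u^2 + (4x-2)\,u + 2n .
\]
The coefficient $x^2$ of $\partial_x^2 u$ and the coefficient $x^2+2x+2n$ of $\partial_x u$ both vanish at $x=0$ once $n(0,t)=0$, so setting $v(t)=\partial_x n_t(0)$ and letting $x\to0^+$ formally collapses this to the scalar Riccati equation $v'=2v(v-1)$. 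With $v(0)=\partial_x n_0(0)>1$ its solution blows up exactly at $\bar t_*=\tfrac12\ln\!\big(\partial_x n_0(0)/(\partial_x n_0(0)-1)\big)$, matching \eqref{e:tStarRiccati}; since $v$ stays finite for $t<t_*$, this already suggests $t_*\le\bar t_*$.

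\textbf{Part (2): rigor and the main obstacle.} The crux is to justify this boundary computation despite the degeneracy at $x=0$ and the fact that $\partial_x n$ may itself blow up there. I would work with $w=n/x$, which for $t<t_*$ extends to $x=0$ with $w(0,t)=\partial_x n_t(0)=v(t)$ and solves
\[
  \partial_t w = x^2\,\partial_x^2 w + x(x+2+2w)\,\partial_x w + 2w^2 + (3x-2)\,w ,
\]
a quasilinear equation that is uniformly parabolic for $x>0$ and degenerates to $\partial_t w=2w^2-2w$ at $x=0$. The plan is to use interior parabolic regularity together with the negative slope bound of Lemma~\ref{l:oleinik} to show that $w$ is continuous up to $\{x=0\}$ on $(0,t_*)$, that $v$ is $C^1$, and that the degenerate terms contribute nothing in the limit, yielding $v'=2v(v-1)$ exactly; if $t_*>\bar t_*$, then $v$ is finite and satisfies this ODE on all of $[0,\bar t_*]$, contradicting its blow-up at $\bar t_*$, so $t_*\le\bar t_*$. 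I expect this boundary-regularity step to be the hard part: the natural shortcut of bounding $n$ from below by an explicit barrier that is linear in $x$ near $0$ and whose slope obeys the Riccati equation \emph{fails}, because the curvature and nonlinear-advection corrections enter at order $x^2$ with an unfavorable sign of size $O(v^2)$ that cannot be absorbed by slowing the barrier. Any workable barrier must instead be matched to the genuinely nonlinear blow-up profile on a strip that shrinks as $v\to\infty$.

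\textbf{Sharpness.} For the final claim I would exhibit one admissible $n_0$ (e.g.\ with $n_0(x)=cx$ near $x=0$ for a constant $c>1$, suitably cut off) and pin $t_*=\bar t_*$ by a two-sided argument. A subsolution as above forces $t_*\le\bar t_*$. For the reverse inequality I would build a supersolution $\bar n$ with $\bar n_t(0)=0$ on $[0,\bar t_*)$, so that the comparison principle Lemma~\ref{l:comparison} gives $n_t(0)\le\bar n_t(0)=0$ and hence no loss before $\bar t_*$, i.e.\ $t_*\ge\bar t_*$. Continuity of the outflux $t\mapsto n_t(0)$ at $t_*$—in contrast to the jump in the Escobedo et al.\ viscous shock—would then follow by analyzing the solution just past onset together with the loss formula Proposition~\ref{p:lossFormula}, which shows $N(n_t)$ leaves its constant value in a $C^1$ manner precisely because $n_t(0)$ switches on continuously from $0$.
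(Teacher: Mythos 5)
Your part (1) is correct and is essentially the paper's own proof verbatim. The genuine gap is in part (2), where your rigorization plan---show $w=n/x$ is continuous up to $\{x=0\}$, that $v=\partial_x n_t(0)$ is $C^1$, that the degenerate terms contribute nothing, and conclude $v'=2v(v-1)$ \emph{exactly}---is a program the paper deliberately avoids, because the boundary regularity it requires is precisely what is not available. The only boundary control proved is the one-sided Oleinik bound (Lemma~\ref{l:oleinik}); the paper states explicitly that it does not know whether $x\,\partial_x n$ or $x^2\partial_x^2 n$ vanish as $x\to 0$ (this is why even the comparison principle is proved by a weak/integral argument, and why the stronger vanishing condition~\eqref{e:strongerVanish0} is flagged as unverified even for the numerical scheme). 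Worse, the exact Riccati ODE cannot hold for general data: the paper notes that the profile away from the origin can initiate loss strictly before $\bar t_*$, and that the outflux can jump (the Escobedo et al.\ viscous shock)---behavior incompatible with the boundary slope obeying a closed autonomous scalar ODE, so the terms $x^2\partial_x^2 w$ and $x\,\partial_x w$ genuinely contribute in the limit. Finally, your contradiction needs ``$v$ stays finite for $t<t_*$,'' which is unproven and is in fact expected to \emph{fail} exactly in the sharp case: the constructed solutions develop a square-root singularity, with $\partial_x n_t(0)\to\infty$ while $n_{t}(0)$ is still $0$ at $t=t_*$, so slope blow-up does not by itself force onset of loss.

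What the paper does instead---and what your proposal is missing---is an explicit barrier construction that converts the Riccati dynamics into positivity of the outflux via the boundary-condition-free comparison principle (Lemma~\ref{l:comparison}). The sub-solution is $z_t=(u_t-c_tx^2)_+$ with $u_t$ the upper branch of the sideways parabola $x=a_tu+\tfrac12 b_tu^2$, where $\partial_t a=2a-2$ (the Riccati equation in disguise, since $\partial_x z_t(0)=1/a_t$), $b_t=b_0e^{6t}$, $c_t=c_0e^{t}$; verifying $\mathcal L z\le 0$ on the support uses only $x\partial_x u\le u$ and elementary sign conditions, no boundary regularity of $n$. Crucially the family continues smoothly \emph{past} the blow-up time $t^*_\epsilon$ where $a_t$ changes sign, at which point $u_t(0)=-2a_t/b_t>0$, so comparison yields $n_t(0)\ge u_t(0)>0$ for $t\in(t^*_\epsilon,2t^*_\epsilon]$; sending $\epsilon\to0$ gives $t_*\le\bar t_*$. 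Your remark that the barrier must be ``matched to the nonlinear blow-up profile'' correctly diagnoses why a barrier linear in $x$ fails, but the matched barrier has to actually be produced, and this is the heart of the proof. For the sharpness claim your two-sided outline does match the paper's strategy: the super-solution there is $Z_t=u_t\varmin S_\gamma$ near the origin (with $b$ constant, $\mathcal L u\ge0$ for small $x$ and large $b$, and the corner condition~\eqref{e:cornerSupSol} at the crossing point), and data is squeezed below $Z_0$ with matching slope; but continuity of $n_t(0)$ at $t_*$ then follows immediately from the squeeze $0\le n_t(0)\le Z_t(0)$ with $Z_t(0)$ continuous and vanishing at $\bar t_*$---no analysis ``just past onset'' via the loss formula is needed or, at this level of detail, available.
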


In the second scenario above, the profile of the initial photon number density away from the origin may initiate loss of photons 
before time $\bar t_*$.
It also allows the photon outflux to have a jump discontinuity at any time, before, at, or after $\bar t_*$, though.
Thus for arbitrary initial data with $\partial_x n_0(0) > 1$, we cannot expect photon loss to begin exactly at time 
$t_* = \bar t_*$, or the photon outflux to be continuous at time $t_*$.
We can, however, produce a family of initial data for which it is true that $t_* = \bar t_*$ precisely, and $n_t(0)$ is continuous at (but not necessarily after) time $t_*$, and this is the second assertion of Proposition~\ref{p:formation}.
The initial data we consider produces solutions where $n_{t_*}(\cdot)$ develops a square-root singularity at $x=0$, and $n_{\cdot}(0)$ develops a corner at $t = t_*$.
Generic initial data for which the photon outflux is continuous at time $t_*$ may exhibit different singular behavior at the point $(x, t) = (0, t_*)$, and we do not presently have a complete characterization.

Finally, we state one result that guarantees photon loss never occurs.
Namely, if the initial data lies entirely below the maximal stationary solution~$\hat n_0$, then 
total photon number is globally conserved and no condensate can form.
\begin{lemma}[Absence of loss]\label{l:absence}
  If $n_0 \leq \hat n_0$, then~$t_* = \infty$.
\end{lemma}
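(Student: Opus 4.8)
The plan is to compare our solution against the maximal stationary solution $\hat n_0$ using the weak comparison principle (Lemma~\ref{l:comparison}), and then read off the boundary behaviour. First I would observe that $\hat n_0$ is itself an exact, time-independent solution of~\eqref{e:komp}. Writing $\hat n_0 = x^2 \hat f_0$ with $\hat f_0(x) = 1/(e^x - 1)$, a direct computation gives $\partial_x \hat f_0 + \hat f_0 + \hat f_0^2 = 0$, so the $f$-flux vanishes identically and hence $\partial_t \hat n_0 = \partial_x J(x, \hat n_0) = 0$. Since $\hat n_0(x)$ decays exponentially as $x \to \infty$, it satisfies the no-flux condition~\eqref{e:noflux}, and it is bounded on $(0, \infty)$, so $\hat n_0 \in L^\infty(Q_T)$ for every $T$. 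In particular, being an exact solution, $\hat n_0$ is simultaneously a sub- and a super-solution in the sense of Definition~\ref{d:subsol}.

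Next I would apply Lemma~\ref{l:comparison} with our solution $n$ in the role of the sub-solution and $\hat n_0$ in the role of the super-solution. The hypothesis $n_0 \leq \hat n_0$ is precisely the ordering required at the initial time, so the conclusion of Lemma~\ref{l:comparison} yields
\begin{equation*}
  n_t(x) \leq \hat n_0(x)
  \qquad\text{for all } t \in [0, T] \text{ and all } x \geq 0\,,
\end{equation*}
and, crucially, this inequality holds up to and including the boundary $x = 0$ without any boundary condition being imposed there.

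It then remains to evaluate this ordering at $x = 0$. Since $\hat n_0(x) = x^2/(e^x - 1)$ behaves like $x$ as $x \to 0^+$, we have $\hat n_0(0) = \lim_{x \to 0^+} \hat n_0(x) = 0$. Combining this with the comparison inequality and the nonnegativity of $n$, for every $t > 0$ we obtain
\begin{equation*}
  0 \leq n_t(0) = \lim_{x \to 0^+} n_t(x) \leq \lim_{x \to 0^+} \hat n_0(x) = 0\,,
\end{equation*}
so $n_t(0) = 0$ for all $t > 0$ (the limit defining $n_t(0)$ exists by Lemma~\ref{l:n0exist}). Finally, by Proposition~\ref{p:persistence} there is a threshold $t_* \in [0, \infty]$ with $n_t(0) > 0$ for every $t > t_*$; since we have just shown $n_t(0) = 0$ for all $t > 0$, no finite threshold can exist, and therefore $t_* = \infty$.

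The argument is essentially immediate once the comparison principle is in hand, so I do not anticipate a serious obstacle; the only points that need care are confirming that $\hat n_0$ genuinely qualifies as a super-solution in the precise sense of Definition~\ref{d:subsol} (in particular that its behaviour at the degenerate boundary $x = 0$ is admissible) and that the conclusion of Lemma~\ref{l:comparison} remains valid on the unbounded domain up to $x = 0$. Both are furnished by the statements already established, so the real content is the verification that $\hat n_0$ is a stationary solution with the correct decay together with the extraction of the boundary value $\hat n_0(0) = 0$.
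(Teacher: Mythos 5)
Your proposal is correct and follows exactly the paper's argument: the paper likewise treats $\hat n_0$ as a (stationary) super-solution and $n$ as a sub-solution, invokes Lemma~\ref{l:comparison} to get $n_t(x) \leq \hat n_0(x)$ everywhere including $x = 0$, and concludes from $\hat n_0(0) = 0$ that $n_t(0) = 0$ for all $t > 0$, hence $t_* = \infty$. The only difference is that you spell out the verification that $\hat n_0$ is an exact stationary solution (vanishing $f$-flux) and qualifies under Definition~\ref{d:subsol}, details the paper leaves implicit.
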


Lemma~\ref{l:absence} is already contained in work of Escobedo \& Mischler~\cite{EscobedoMischler01}.
We can, however, provide a short and direct proof of it here using the comparison principle. 
\begin{proof}[Proof of Lemma~\ref{l:absence}]
  Treating $\hat n_0$ as a super-solution and $n$ as a sub-solution, Lemma~\ref{l:comparison} implies that $n_t(x) \leq \hat n_0(x)$ for all $t > 0$, $x \geq 0$.
  Since $\hat n_0(0) = 0$, we must have $n_t(0) = 0$ for all $t > 0$, hence the total photon number $N[n_t]$ is constant.
\end{proof}

\subsection{Long Time Convergence.}\label{s:longtime}

We now study the behavior of solutions as~$t \to \infty$.
Our main result shows that as~$t \to \infty$, the solution must converge (in~$L^1$) to a stationary solution~$\hat n_\mu$.
The parameter~$\mu$ and the total loss in the photon number can be determined uniquely (but not explicitly) from the initial data.

\begin{theorem}[Long time convergence]\label{t:lim}
  Let $n_0$ be a nonnegative bounded function which is not identically $0$ and satisfies~\eqref{e:x2n}.
  If $n$ is the unique global solution of~\eqref{e:komp}--\eqref{e:noflux} with initial data $n_0$, then
  \begin{equation}\label{e:l1conv}
    \lim_{t \to \infty} \norm{ n_t - \hat n_\mu }_{L^1} = 0\,,
  \end{equation}
  where $\mu \in [0, \infty)$ is the unique number for which
  \begin{equation}\label{e:muDef}
    N(\hat n_\mu)=N(n_0)- \int_0^\infty n_t(0)^2 \, dt \,.
  \end{equation}
\end{theorem}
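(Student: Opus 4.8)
The plan is to combine the loss formula (Proposition~\ref{p:lossFormula}) with an entropy-dissipation/LaSalle argument. First I would record the consequences of the loss formula: since $N(n_t) = N(n_0) - \int_0^t n_\tau(0)^2\,d\tau$ is nonincreasing and bounded below by $0$, the limit $N_\infty \defeq \lim_{t\to\infty} N(n_t)$ exists, $\int_0^\infty n_\tau(0)^2\,d\tau \le N(n_0) < \infty$, and $N_\infty = N(n_0) - \int_0^\infty n_t(0)^2\,dt$. To guarantee that no mass escapes entirely (which would force the spurious ``limit'' $\mu=\infty$, $\hat n_\infty = 0$), I would compare $n$ with the solution $\underline n$ issuing from $\min(n_0,\hat n_0)$. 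Since $\min(n_0,\hat n_0) \le \hat n_0$, Lemma~\ref{l:absence} shows $\underline n$ never loses photons, so $N(\underline n_t) \equiv N(\min(n_0,\hat n_0)) \defeq m_0$, and $m_0 > 0$ because $n_0 \not\equiv 0$. The comparison principle (Lemma~\ref{l:comparison}) then gives $n_t \ge \underline n_t$, whence $N(n_t) \ge m_0$ for all $t$ and therefore $N_\infty \ge m_0 > 0$.

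The core of the argument is the Lyapunov structure from Section~\ref{s:largetime}. Writing $g = x + \ln\frac{f}{1+f}$, the flux in~\eqref{e:komp} is a multiple of $\partial_x g$, and the quantum (free) entropy $H$ dissipates as $\frac{d}{dt}H(n_t) = -D(n_t)$ with $D(n_t) = \int_0^\infty x^4 f(1+f)(\partial_x g)^2\,dx \ge 0$ vanishing exactly at the equilibria $\hat n_\mu$. The boundary term at $x=0$ produced by integration by parts is $g(0)\,n_t(0)^2$, and it vanishes because $g(0)=0$ whenever $n_t(0)>0$ (there $f\to\infty$, so $\ln\frac{f}{1+f}\to 0$); the term at infinity vanishes by~\eqref{e:noflux} and~\eqref{jr}. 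Granting that $H(n_{t_0})$ is finite for some $t_0>0$ (using interior parabolic smoothing) and bounded below along the orbit (working, if necessary, with the relative entropy against a fixed $\hat n_{\mu_0}$, which is nonnegative and has the same dissipation), integration gives $\int_{t_0}^\infty D(n_t)\,dt < \infty$, so there is a sequence $t_k\to\infty$ with $D(n_{t_k})\to 0$.

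Next I would establish precompactness of $\{n_t : t\ge 1\}$ in $L^1(0,\infty)$: the global $L^\infty$ bound (Proposition~\ref{p:nBdd}) controls the mass on $(0,\delta)$ uniformly, the uniform decay~\eqref{e:x2nPtwise} gives tightness at infinity, and the Oleinik one-sided slope bound (Lemma~\ref{l:oleinik}) together with interior parabolic regularity yields $L^1$-equicontinuity of translates on compact subsets of $(0,\infty)$; Kolmogorov--Riesz then gives compactness. Passing to a subsequence, $n_{t_k}\to n_\infty$ in $L^1$. Lower semicontinuity of the dissipation forces $\partial_x g_\infty\equiv 0$, so $n_\infty = \hat n_\mu$ for some $\mu\in[0,\infty]$; since $N(n_\infty)=N_\infty\ge m_0>0$, in fact $\mu<\infty$. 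To upgrade to convergence of the full trajectory I would invoke LaSalle: by the $L^1$-contraction (Lemma~\ref{l:l1contract}) the solution semigroup is continuous, so the $\omega$-limit set is nonempty, compact and invariant; $H$ is constant on it, hence every element is an equilibrium $\hat n_\mu$, and $N\equiv N_\infty$ on it. Because $\mu\mapsto N(\hat n_\mu)$ is a strictly decreasing bijection of $[0,\infty)$ onto $(0,2\zeta(3)]$, the $\omega$-limit set reduces to the single point $\hat n_\mu$ with $N(\hat n_\mu)=N_\infty$, giving~\eqref{e:l1conv}; the same monotonicity yields uniqueness of $\mu$ and, with $N_\infty\in(0,2\zeta(3)]$, its characterization~\eqref{e:muDef}.

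I expect the main obstacles to be the two technical points underlying the entropy step and the compactness step: verifying that the entropy is finite and bounded below along the orbit and that its boundary flux at the degenerate endpoint $x=0$ genuinely vanishes, and proving $L^1$-precompactness uniformly up to that degenerate boundary. By contrast, the loss-formula bookkeeping, the comparison lower bound on the mass, and the identification and uniqueness of $\mu$ are comparatively routine.
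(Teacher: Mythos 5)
Your overall architecture (loss formula, entropy dissipation, compactness, LaSalle, identification of $\mu$ via the strictly decreasing map $\mu \mapsto N(\hat n_\mu)$) matches the paper's in spirit, and your mass lower bound via comparison with the solution started from $\min(n_0,\hat n_0)$ is exactly the device used in the paper's proof of Proposition~\ref{p:lasalle}. But there is a genuine gap at the entropy step, and it sits precisely where you wave it through rather than where you flag difficulty. You assert that the boundary term at infinity in the dissipation identity ``vanishes by~\eqref{e:noflux} and~\eqref{jr}.'' This does not follow: the boundary term is $Jh$, not $J$, and $h(x,n) = x - \ln(1 + x^2/n)$ grows \emph{linearly} in $x$ when $n$ decays only algebraically, so $J\to 0$ gives no control of $Jh$. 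Worse, under the standing hypothesis~\eqref{e:x2n} alone the entropy itself may be infinite: the integrand contains $xn$, and $n_0(x)\sim 1/(x^2\log x)$ satisfies~\eqref{e:x2n} while $\int x n_0\,dx = \infty$, and nothing in Theorem~\ref{t:exist} or Proposition~\ref{p:nBdd} upgrades the spatial decay at later times. This is why the paper proves entropy dissipation (Lemma~\ref{ent}) only under the exponential decay hypothesis~\eqref{e:expDecay}, using the stationary super-solutions $S_\gamma$ to dominate $n_t$ and kill $Jh$ at infinity, and explicitly states it does not know whether~\eqref{e:ent} holds under~\eqref{e:x2n} alone. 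Your relative-entropy fallback does not repair this, since the relative entropy against $\hat n_{\mu_0}$ carries the same linearly growing weight. A secondary soft spot: your ``lower semicontinuity of the dissipation'' under $L^1$ convergence is unsubstantiated ($D$ involves $\partial_x h$); the paper avoids it entirely by using invariance of the $\omega$-limit set, constancy of $H$ there (via dominated convergence with majorant $S_\gamma$), and then $\partial_t H(U_t b)=0 \Rightarrow D(U_t b)=0$ --- but note this again requires the $\omega$-limit points to lie under $S_\gamma$, which your direct approach does not provide for general data.

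The missing idea that closes the gap is the paper's truncation argument. One first proves the full LaSalle convergence (Proposition~\ref{p:lasalle}) only for initial data dominated by some $S_\gamma$ --- where Lemma~\ref{ent} is available --- and then, for general $n_0$ satisfying~\eqref{e:x2n}, sets $a_R = n_0$ on $[0,R]$ and $0$ beyond. Each $a_R$ is compactly supported, hence lies under some $S_{\gamma(R)}$, so $U_t a_R \to \hat n_{\nu(R)}$ in $L^1$. Monotonicity of $a_R$ in $R$ plus the comparison principle makes $\nu(R)$ nonincreasing, so $\mu = \lim_{R\to\infty}\nu(R)$ exists in $[0,\infty)$, and a three-term triangle inequality using the $L^1$ contraction (Lemma~\ref{l:l1contract}), $\|n_t - U_t a_R\|_{L^1} \le \|n_0 - a_R\|_{L^1}$, transfers the convergence to $n_t$. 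Without this step (or a substitute argument establishing finiteness and dissipation of $H$ under~\eqref{e:x2n} alone --- an open point even for the authors), your proof covers only exponentially decaying data and not the stated generality of Theorem~\ref{t:lim}. Once $L^1$ convergence is in hand, your identification of $\mu$ through $N$ and the loss formula is fine and is exactly how the paper derives~\eqref{e:muDef}.
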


As mentioned above, while the parameter~$\mu$ can be determined uniquely from the identity~\eqref{e:muDef}, it can not in general be explicitly computed from the initial data $n_0$.
The best we can do presently is to obtain a non-trivial lower bound for $N(\hat n_\mu)$ for general initial data, and compute $\mu$ explicitly in two special cases.
We present this below.

\begin{corollary}\label{c:mulim}
  Let $n_0$ and $n$ be as in Theorem~\ref{t:lim}.
  \begin{enumerate}\reqnomode
    \item
      If $n_0 \geq \hat n_0$, then~\eqref{e:l1conv} holds with~$\mu = 0$, and the total loss in the photon number is precisely $N(n_0) - N(\hat n_0)$.

    \item
      If, on the other hand, $n_0 \leq \hat n_0$, then there is never any photon outflux at $x = 0$, and a Bose--Einstein condensate never forms.
      Consequently~\eqref{e:l1conv} holds for the unique $\mu \in [0, \infty)$ such that $N(n_0) = N( \hat n_\mu )$.
    \item
      In general, the total photon number in equilibrium is bounded below by
      \begin{equation}\label{e:NmuLower}
	N(\hat n_\mu) \geq \int_0^\infty (n_0 \varmin \hat n_0) \, dx > 0\,,
      \end{equation}
      where the notation $a \varmin b$ above denotes the minimum of $a$ and $b$.
  \end{enumerate}
\end{corollary}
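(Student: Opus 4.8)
The plan is to dispatch the three parts in order, leaning on the comparison principle (Lemma~\ref{l:comparison}), the long time convergence result (Theorem~\ref{t:lim}), and the elementary fact that $\mu \mapsto N(\hat n_\mu)$ is continuous and strictly decreasing on $[0,\infty)$ with maximal value $N(\hat n_0) = 2\zeta(3)$ attained at $\mu = 0$. The recurring mechanism is that $L^1$ convergence $n_t \to \hat n_\mu$ forces $N(n_t) \to N(\hat n_\mu)$, so any pointwise comparison between two solutions passes to the limit at the level of total photon number. For part (i), since the stationary solution $\hat n_0$ is in particular a sub-solution and $\hat n_0 \le n_0$, Lemma~\ref{l:comparison} gives $\hat n_0 \le n_t$ for all $t \ge 0$; integrating yields $N(n_t) \ge N(\hat n_0)$, and letting $t \to \infty$ gives $N(\hat n_\mu) \ge N(\hat n_0)$. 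As $N(\hat n_0)$ is the maximum of $N(\hat n_\mu)$ over $\mu \ge 0$ and the map is strictly decreasing, this forces $\mu = 0$. The total loss then drops out of~\eqref{e:muDef} (equivalently the loss formula~\eqref{e:loss} with $s=0$, $t\to\infty$): $\int_0^\infty n_t(0)^2\,dt = N(n_0) - N(\hat n_0)$.

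For part (ii), the hypothesis $n_0 \le \hat n_0$ lets me invoke Lemma~\ref{l:absence}, so that $t_* = \infty$ and hence $n_t(0) = 0$ for every $t > 0$; thus $\int_0^\infty n_t(0)^2\,dt = 0$ and no condensate forms. Then~\eqref{e:muDef} reduces to $N(\hat n_\mu) = N(n_0)$. Since $n_0 \not\equiv 0$ we have $0 < N(n_0) \le N(\hat n_0) = 2\zeta(3)$, so by the continuity and strict monotonicity of $\mu \mapsto N(\hat n_\mu)$ there is a unique $\mu \in [0,\infty)$ with $N(\hat n_\mu) = N(n_0)$, and Theorem~\ref{t:lim} yields~\eqref{e:l1conv} for this $\mu$.

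For part (iii), I introduce the truncated datum $m_0 \defeq n_0 \varmin \hat n_0$ and let $m$ be the corresponding solution of~\eqref{e:komp}--\eqref{e:noflux}. It is bounded, nonnegative, satisfies~\eqref{e:x2n} (because $m_0 \le n_0$), and is not identically $0$: as $n_0 \ge 0$ is not identically $0$ and $\hat n_0 > 0$ on $(0,\infty)$, the minimum is strictly positive on a set of positive measure, so $\int_0^\infty (n_0 \varmin \hat n_0)\,dx > 0$. Since $m_0 \le \hat n_0$, part (ii) applies to $m$, giving convergence to some $\hat n_{\mu'}$ with $N(\hat n_{\mu'}) = N(m_0)$. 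Since also $m_0 \le n_0$, Lemma~\ref{l:comparison} (with $m$ a sub-solution and $n$ a super-solution) gives $m_t \le n_t$ for all $t$, hence $N(m_t) \le N(n_t)$. Letting $t \to \infty$ and using $L^1$ convergence on both sides produces $N(\hat n_\mu) \ge N(\hat n_{\mu'}) = N(m_0) = \int_0^\infty (n_0 \varmin \hat n_0)\,dx > 0$, which is~\eqref{e:NmuLower}.

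Each step is individually short; the one genuine idea is the comparison with the truncated datum $n_0 \varmin \hat n_0$ in part (iii), which transfers the exactly solvable case (ii) into a lower bound in the general case. The main thing to be careful about is checking that $m_0$ inherits all hypotheses of Theorem~\ref{t:lim} — boundedness, the decay~\eqref{e:x2n}, and nontriviality — and that the chain of inequalities survives passage to the limit, which it does precisely because $L^1$ convergence controls the total photon number $N$.
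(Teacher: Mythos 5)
Your proof is correct and follows essentially the same route as the paper: comparison with $\hat n_0$ plus Theorem~\ref{t:lim} for parts (i) and (ii), and comparison with the truncated datum $n_0 \varmin \hat n_0$ (the paper's $\ubar n_0$) combined with the conservation from part (ii) for part (iii). The only cosmetic difference is that in part (i) you deduce $\mu = 0$ from strict monotonicity of $\mu \mapsto N(\hat n_\mu)$, whereas the paper uses the pointwise inequality $\hat n_\mu < \hat n_0$ for $\mu > 0$ against the bound $n_t \geq \hat n_0$ --- both rest on the same monotonicity of $\hat n_\mu$ in $\mu$.
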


The main tool used in the proof of Theorem~\ref{t:lim} is to use the fact that solutions dissipate a quantum entropy
\begin{equation}\label{e:HdefIntro}
  H(n) \defeq \int_0^\infty \paren[\Big]{
    xn + n\ln n -(n+x^2)\ln(n+x^2)+x^2 \ln (x^2)
  } \, dx\,.
\end{equation}
While this can be checked formally (see~\cite{CaflischLevermore86} or Section~\ref{s:largetime}), we are only able to prove~\eqref{e:HdefIntro} rigorously under a decay assumption on the initial data.
\begin{lemma}[Entropy dissipation]\label{ent}
  Suppose there exists $C_0 > 0$ such that
  \begin{equation}\label{e:expDecay}
    n_0(x) \leq C_0(1 + x^2) e^{-x}
    \quad\text{for all}\quad x > 0 \,.
  \end{equation}
  Then for any~$t > 0$ we have
  \begin{equation}\label{e:ent}
    \partial_t H(n_t)
    + \int_0^\infty \frac{J^2}{n(n+ x^2)} \, dx
    = 0.
  \end{equation}
\end{lemma}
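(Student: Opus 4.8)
The plan is to realize $H$ as a Lyapunov functional whose dissipation is produced by a single integration by parts, and then to spend essentially all of the effort justifying the resulting boundary terms at the two spatial endpoints. First I would identify the \emph{entropy variable}: writing $h(n,x)$ for the integrand of $H$ in \eqref{e:HdefIntro}, a direct computation gives $\partial_n h = q$, where $q = q(x,n) := x + \ln\frac{n}{n+x^2}$, and a second direct computation gives the crucial identity $\partial_x q = \frac{J}{n(n+x^2)}$ on the region $\{n>0\}$, with $J$ as in \eqref{e:komp}. Since $n$ is positive and smooth in the interior $(0,\infty)^2$ by Theorem~\ref{t:exist}, combining $\partial_t h = q\,\partial_t n = q\,\partial_x J$ with the product rule yields the pointwise conservation--dissipation identity
\[ \partial_t h + \frac{J^2}{n(n+x^2)} = \partial_x(qJ) \quad\text{on } (0,\infty)^2. \]
Everything then reduces to integrating this over a box and showing that the entropy flux $qJ$ contributes nothing at $x=0$ and $x=\infty$.

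Concretely, I would fix $0<\delta<R<\infty$ and $0<s\le t$ and integrate the identity over $(\delta,R)\times(s,t)$ to obtain
\[ \int_\delta^R\big(h(n_t)-h(n_s)\big)\,dx + \int_s^t\!\!\int_\delta^R \frac{J^2}{n(n+x^2)}\,dx\,d\tau = \int_s^t\big(q(R,n_\tau)J(R,n_\tau)-q(\delta,n_\tau)J(\delta,n_\tau)\big)\,d\tau. \]
The three limiting passages are: (i) as $R\to\infty$ the outer boundary term vanishes; (ii) as $\delta\to0$ the inner boundary term vanishes; and (iii) the bulk terms converge to $H(n_t)-H(n_s)$ and $\int_s^t\!\int_0^\infty \frac{J^2}{n(n+x^2)}\,dx\,d\tau$. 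For (iii) the decay hypothesis \eqref{e:expDecay} is essential: via the comparison principle (Lemma~\ref{l:comparison}) and the exponential super-solutions underlying Proposition~\ref{p:nBdd} one has the uniform bound $n_\tau(x)\le C(1+x^2)e^{-x}$, and a short estimate on $h$ then shows the integrand of $H$ is absolutely integrable, so dominated convergence applies to the entropy terms while monotone convergence handles the nonnegative dissipation (which is thereby shown to be finite). The same super-solution bound, together with the uniform decay in \eqref{e:x2dxnVanish} and the integrated flux bound \eqref{jr}, forces the outer boundary term to vanish as $R\to\infty$: $J(R,n_\tau)$ decays (exponentially) fast enough to absorb the merely logarithmic growth of $q$, giving (i).

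The remaining term (ii), the inner boundary flux as $\delta\to0$, is where I expect the real difficulty. Splitting $q(\delta,n_\tau)=\delta+\ln\frac{n_\tau(\delta)}{n_\tau(\delta)+\delta^2}$, the contribution of the first piece is $\delta\int_s^t J(\delta,n_\tau)\,d\tau$, which tends to $0$ because $\int_s^t J(\delta,n_\tau)\,d\tau\to\int_s^t n_\tau(0)^2\,d\tau$ by Lemma~\ref{l:intJ0}. The genuinely delicate piece is $\int_s^t \ln\frac{n_\tau(\delta)}{n_\tau(\delta)+\delta^2}\,J(\delta,n_\tau)\,d\tau$: where $n_\tau(0)>0$ both the logarithm and $J$ are bounded (Lemma~\ref{l:n0exist} and Lemma~\ref{l:intJ0}), so this is routine; but where $n_\tau(0)=0$ the logarithm can blow up while $J\to0$, and one must show the product still vanishes. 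Here I would invoke the Oleinik slope bound (Lemma~\ref{l:oleinik}) to control how fast $n_\tau(x)$ can decay as $x\to0$, together with the strong flux convergence $\int_s^t x^2|\partial_x n_\tau|\,d\tau\to0$ from \eqref{e:a0} (which is exactly why the restriction $s>0$ appears), in order to dominate $\ln\frac{n}{n+\delta^2}\,J$ by an integrable quantity tending to zero.

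Having obtained $H(n_t)-H(n_s)+\int_s^t D(\tau)\,d\tau=0$ with $D(\tau):=\int_0^\infty \frac{J^2}{n(n+x^2)}\,dx$, I would finish by noting that interior regularity together with the uniform bounds makes $\tau\mapsto D(\tau)$ continuous on $(0,\infty)$, so fixing any $s\in(0,t)$ and differentiating in $t$ yields $\partial_t H(n_t)=-D(t)$ for every $t>0$, which is precisely \eqref{e:ent}. The decisive obstacle throughout is the behavior at the degenerate boundary $x=0$: the entropy variable $q$ is genuinely singular there when $n_\tau(0)=0$, and only the finite-speed, Oleinik-type structure of the problem---packaged in Lemmas~\ref{l:n0exist}, \ref{l:oleinik} and~\ref{l:intJ0}---tames it, whereas the outer boundary is comparatively easy once the exponential super-solution bound coming from \eqref{e:expDecay} is in hand.
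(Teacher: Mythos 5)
Your skeleton---multiply the equation by the entropy variable $h(x,n)=x+\ln\frac{n}{n+x^2}$, integrate by parts over a box, and kill the two boundary fluxes---is exactly the paper's, but the step you yourself identify as ``the real difficulty'' does not close with the tools you invoke, and a parallel gap sits in your outer boundary term. At $x=\delta$: the Oleinik bound (Lemma~\ref{l:oleinik}) gives $\partial_x n_\tau\ge-\varphi_\tau$, hence only $n_\tau(\delta)\ge n_\tau(0)-C\delta$, which is vacuous precisely when $n_\tau(0)=0$; nothing in the paper rules out $n_\tau(\delta)$ being as small as $e^{-1/\delta}$, in which case $\ln\frac{n_\tau(\delta)}{n_\tau(\delta)+\delta^2}$ blows up like $-1/\delta$, while Lemma~\ref{l:intJ0} and~\eqref{e:a0} give only \emph{rate-free} $L^1$-in-time convergence of $J(\delta,n_\tau)$. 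No combination of these lemmas dominates the product $\ln\frac{n}{n+\delta^2}\,J$, and it is not even clear that your pointwise-in-$x$ trace $\int_s^t q(\delta,n_\tau)J(\delta,n_\tau)\,d\tau$ has a limit as $\delta\to0$. At $x=R$ the claim that $q$ grows ``merely logarithmically'' is unjustified: $q(R,n)=R-\ln\paren[\big]{1+R^2/n}$, and since no positive lower bound on $n$ at spatial infinity is established anywhere (strong positivity is only local), $q$ can be arbitrarily large in magnitude; moreover $J$ contains $x^2\partial_x n$, for which only uniform vanishing~\eqref{e:x2dxnVanish}, not an exponential rate, is known.

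The paper circumvents both problems by an algebraic reorganization that your proposal lacks and that is the actual content of the proof. Writing $h=x+\partial_n\Phi$ with $\Phi$ as in~\eqref{e:HPhi}, one has the exact identity
\begin{equation*}
  Jh=\partial_x\paren[\big]{x^2(xn+\Phi)}-B\,,
  \qquad
  B= xn(x^2+x-n)+2x\Phi+x^2\partial_x\Phi+n(x^2-2x-n)\partial_n\Phi\,,
\end{equation*}
and the elementary bounds $n\abs{\partial_n\Phi}=n\ln\paren[\big]{1+\tfrac{x^2}{n}}\le x^2$ and $x\abs{\partial_x\Phi}\le 2n$ show that \emph{every} logarithm occurs multiplied by a factor annihilating its singularity. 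Consequently $\abs{B}\le C\epsilon$ near zero and $\abs{x^2(xn+\Phi)}+\abs{B}\le CR^5e^{-R}$ on $[R,R+1]$, using only the upper bound $n\le S_\gamma$ from Corollary~\ref{c:Sbound}---no lower bound on $n$, no Oleinik inequality, no~\eqref{e:a0}, and no decay rate for $\partial_x n$ are needed, since $\partial_x n$ enters only through the exact derivative, which telescopes. The paper also replaces your sharp traces by \emph{averaged} boundary terms via the piecewise-linear cutoff $\zeta$, so only $\frac{1}{\epsilon}\brak[\big]{x^2(xn+\Phi)}_\epsilon^{2\epsilon}=O(\epsilon)$ and the spatial average of $B$ appear, sidestepping the unknown pointwise behavior of $Jh$ as $x\to0$. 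If you wish to repair your argument, the fix is this rewriting of $Jh$ together with the cutoff averaging, not finer lower bounds on $n$.
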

\begin{remark}
  To prove existence of solutions to~\eqref{e:komp}--\eqref{e:noflux}, one only needs to assume the algebraic decay condition~\eqref{e:x2n}.
  However, to prove Lemma~\ref{ent}, we required the exponential decay assumption~\eqref{e:expDecay}.
  We are presently unaware if~\eqref{e:ent} holds for initial data that only satisfies~\eqref{e:x2n}, and our proof requires the strong exponential decay assumption~\eqref{e:expDecay}.
\end{remark}

Once entropy decay is established, we will prove Theorem~\ref{t:lim} using the $L^1$-contraction and LaSalle's invariance principle (see Section~\ref{s:largetime}, below).
We remark, however, our proof gives no information on the rate at which~$n_t \to \hat n_\mu$.
For exponentially decaying initial data, the entropy can be used to provide some information about the convergence rate, and we conclude by stating this result.

\begin{proposition}[Convergence rate]\label{p:rate}
  Suppose~$n_0$ satisfies~\eqref{e:expDecay}, and let~$\mu$ be as in Theorem~\ref{t:lim}.
  Then there exists a constant $C$ (depending only on~$C_0$) such that
  \begin{equation}\label{e:rate}
    \norm{ x^2 (n_t - \hat n_\mu) }^2_{L^1}
      \leq C \paren[\Big]{ H(n)-H(\hat n_\mu)+\mu \int_t^\infty n_s(0)^2 \, ds } \,
  \end{equation}
  for all~$t \geq 0$.
\end{proposition}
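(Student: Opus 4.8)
The plan is to recognize the right-hand side of~\eqref{e:rate} as a relative entropy and then to establish a Csisz\'ar--Kullback--Pinsker-type inequality controlling the weighted $L^1$ distance by it. Writing the integrand of the entropy as $h(n,x) \defeq xn + n\ln n - (n+x^2)\ln(n+x^2) + x^2\ln(x^2)$, I would first compute $\partial_n h = x + \ln\frac{n}{n+x^2}$ and evaluate it at the equilibrium. Using the identity $\hat n_\mu/(\hat n_\mu + x^2) = e^{-(x+\mu)}$, one finds $\partial_n h(\hat n_\mu, x) = -\mu$, a constant independent of $x$; this is exactly the Euler--Lagrange characterization of the Bose--Einstein equilibrium. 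Consequently the relative entropy
\[
  H(n_t \mid \hat n_\mu) \defeq \int_0^\infty \paren[\big]{ h(n_t,x) - h(\hat n_\mu,x) + \mu\,(n_t - \hat n_\mu) } \, dx
  = H(n_t) - H(\hat n_\mu) + \mu\,\paren[\big]{ N(n_t) - N(\hat n_\mu) }.
\]
The loss formula (Proposition~\ref{p:lossFormula}) gives $N(n_t) = N(n_0) - \int_0^t n_s(0)^2\,ds$, while the defining relation~\eqref{e:muDef} gives $N(\hat n_\mu) = N(n_0) - \int_0^\infty n_s(0)^2\,ds$; subtracting yields $N(n_t) - N(\hat n_\mu) = \int_t^\infty n_s(0)^2\,ds$. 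Hence $H(n_t \mid \hat n_\mu)$ is precisely the quantity in parentheses on the right of~\eqref{e:rate}, and it remains to prove $\norm{x^2(n_t - \hat n_\mu)}_{L^1}^2 \le C\,H(n_t \mid \hat n_\mu)$.

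For the inequality I would exploit convexity of $h$ in its first argument. Since $\partial_n^2 h = x^2/\paren[\big]{n(n+x^2)} > 0$, Taylor's theorem with Lagrange remainder gives, pointwise in $x$,
\[
  h(n_t,x) - h(\hat n_\mu,x) + \mu\,(n_t - \hat n_\mu)
  = \frac{1}{2}\,\frac{x^2}{\xi(\xi+x^2)}\,(n_t - \hat n_\mu)^2,
\]
for some $\xi = \xi(x)$ lying between $n_t(x)$ and $\hat n_\mu(x)$; integrating shows $H(n_t \mid \hat n_\mu) = \tfrac12 \int_0^\infty \frac{x^2}{\xi(\xi+x^2)}(n_t-\hat n_\mu)^2\,dx \ge 0$, confirming in passing that the right side of~\eqref{e:rate} is nonnegative. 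Splitting the weight and applying Cauchy--Schwarz,
\[
  \int_0^\infty x^2\abs{n_t - \hat n_\mu}\,dx
  = \int_0^\infty \frac{x\abs{n_t-\hat n_\mu}}{\sqrt{\xi(\xi+x^2)}}\cdot x\sqrt{\xi(\xi+x^2)}\,dx
  \le \sqrt{2\,H(n_t \mid \hat n_\mu)}\,\paren[\Big]{\int_0^\infty x^2\,\xi(\xi+x^2)\,dx}^{1/2},
\]
where the $\xi$-factors cancel inside the first integrand. Squaring then reduces the proposition to a uniform bound on the weight integral $\int_0^\infty x^2\,\xi(\xi+x^2)\,dx$.

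The main obstacle is this last bound, and it is here that the exponential-decay hypothesis~\eqref{e:expDecay} is used. Since $\xi$ lies between $n_t$ and $\hat n_\mu$, it suffices to bound each. For the equilibrium, $\hat n_\mu(x) \le x^2/(e^x-1) \le C(1+x^2)e^{-x}$. For the solution, I would invoke a time-uniform pointwise estimate $n_t(x) \le C(1+x^2)e^{-x}$, valid for all $t \ge 0$ with $C$ depending only on $C_0$, obtained by comparison (Lemma~\ref{l:comparison}) against a stationary super-solution dominating the initial data in~\eqref{e:expDecay}. With $\xi \le C(1+x^2)e^{-x}$ one has $x^2\xi(\xi+x^2) \le C\,x^6 e^{-x}$ for large $x$ and $\le C\,x^2$ near the origin, so $\int_0^\infty x^2\,\xi(\xi+x^2)\,dx \le C(C_0) < \infty$. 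Combining this with the Cauchy--Schwarz estimate and squaring gives~\eqref{e:rate}. The remaining care is routine but essential: checking that $H(n_t)$ and all the integrals above are finite under~\eqref{e:expDecay}, and verifying that the super-solution comparison indeed produces a time-uniform exponential bound with constant controlled by $C_0$.
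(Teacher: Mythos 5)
Your proposal is correct and follows essentially the same route as the paper's proof: Taylor expansion of the entropy about $\hat n_\mu$ using $\partial_n\Phi(x,\hat n_\mu)=-\mu$, identification of the linear term with $\mu\int_t^\infty n_s(0)^2\,ds$ via the loss formula and~\eqref{e:muDef}, and a Cauchy--Schwarz step converting the weighted $L^2$ control from the quadratic remainder into the weighted $L^1$ bound, with finiteness of the weight integral secured by comparison with the stationary super-solution $S_\gamma$. The only (cosmetic) difference is that you carry the intermediate value $\xi$ through the Cauchy--Schwarz split and bound it by $S_\gamma$ afterwards, whereas the paper replaces $\tilde n$ by $S_\gamma$ first (using $n_t\le S_\gamma$ and $\hat n_\mu\le\hat n_0\le S_\gamma$, so the weight is $t$-independent from the outset).
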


\subsection*{Plan of this paper.}
In Section~\ref{s:construct} we prove our results concerning properties of solutions
(Lemmas~\ref{l:n0exist}--\ref{l:comparison})
and the loss formula (Proposition~\ref{p:lossFormula}).
The proofs require an Oleinik type bound on the negative slope, and we also prove this in Section~\ref{s:construct}.
In Section~\ref{s:bec} we prove our results concerning onset, persistence and absence of photon loss (Proposition~\ref{p:persistence}, the second assertion in Proposition~\ref{p:formation} and Lemma~\ref{l:absence}).
In Section~\ref{s:largetime} we prove our results concerning the long time behavior (Theorem~\ref{t:lim}, Corollary~\ref{c:mulim}, Lemma~\ref{ent} and~\ref{p:rate}, and the first assertion in Proposition~\ref{p:formation}).
Finally we conclude this paper with two appendices.
Appendix~\ref{s:nmethod} describes the numerical method used to produce Figures~\ref{f:formationEHV}--\ref{f:formationSlope}.

\section{Construction and Properties of Solutions.}\label{s:construct}
\subsection{Negative Slope Bounds (Lemma~\ref{l:oleinik}).}
We begin by proving an Oleinik inequality, that guarantees a negative slope bound for solutions.
This will be used in the proofs of Lemmas~\ref{l:n0exist} and~\ref{l:intJ0}.
\begin{lemma}[Oleinik inequality]\label{l:oleinik}
Let $T > 0$, let $Q_T = (0, \infty) \times (0, T)$, and $n \in L^\infty(Q_T)$ be a nonnegative solution of~\eqref{e:komp}--\eqref{e:noflux} with initial data satisfying~\eqref{e:x2n}.
For every $(x, t) \in (0, \infty) \times (0, T]$, we have
\begin{equation}\label{e:dxnLower}
  \partial_x n \geq -\frac{1}{2t}-\frac{5x}{2} - \frac{\alpha}2 \,,
  \quad\text{for any }
  \alpha \geq {\sqrt{6 \norm{n}_{L^\infty(Q_T)} + 1} - 1 }\,.
\end{equation}
\end{lemma}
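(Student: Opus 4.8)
The plan is to run a barrier/maximum-principle argument on the equation satisfied by the slope $w \defeq \partial_x n$. Since the top-order coefficient $x^2$ in \eqref{e:komp} is strictly positive on $(0,\infty)^2$ and $n$ is bounded (Theorem~\ref{t:exist}, Remark~\ref{r:bounded}), interior parabolic (Schauder) regularity upgrades the $C^{2,1}$ regularity to $C^\infty$ on $(0,\infty)^2$, so I may differentiate \eqref{e:komp} in $x$; then $w$ solves classically
\[
  \partial_t w = x^2 \partial_x^2 w + (x^2 + 2x + 2n)\,\partial_x w + 2 w^2 + (4x-2) w + 2n\,.
\]
Writing $\phi(x,t) \defeq -\tfrac{1}{2t} - \tfrac{5x}{2} - \tfrac{\alpha}{2}$ for the claimed barrier, the goal \eqref{e:dxnLower} is exactly $W \defeq w - \phi \ge 0$ on $(0,\infty)\times(0,T]$. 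First I would verify that $\phi$ is a strict subsolution of the $w$-equation at every point where $w$ has dipped to or below it, and then argue that a negative infimum of $W$ cannot be approached at the parabolic boundary.

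The algebraic core is the touching-point computation. Suppose at an interior point $w \le \phi$, $\partial_x W = 0$ (so $\partial_x w = \partial_x\phi = -\tfrac52$) and $\partial_x^2 W \ge 0$ (so $\partial_x^2 w \ge 0 = \partial_x^2\phi$). Discarding the nonnegative term $x^2\partial_x^2 w$ and using $\partial_t\phi = \tfrac{1}{2t^2}$ gives
\[
  \partial_t W \ge 2 w^2 + (4x-2) w - \tfrac52(x^2+2x+2n) + 2n - \tfrac{1}{2t^2} =: g(w)\,.
\]
As a function of $w$, $g$ is an upward parabola with vertex at $w^\ast = \tfrac12 - x$; since $\phi < w^\ast$ for all $x,t,\alpha>0$ and $w \le \phi$, both $w$ and $\phi$ lie on the decreasing branch, whence $g(w) \ge g(\phi)$. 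Expanding $g(\phi)$ produces the decisive cancellations — the $\tfrac{1}{2t^2}$ terms cancel $\partial_t\phi$, and the $x^2$-terms cancel because of the tuned coefficient $\tfrac52$ — leaving
\[
  g(\phi) = \frac{3x+1}{t} + \frac{\alpha}{t} + 3\alpha x + \Bigl(\frac{\alpha^2}{2} + \alpha - 3n\Bigr)\,.
\]
Every term but the last is nonnegative, and $\tfrac{\alpha^2}{2}+\alpha \ge 3n$ holds precisely when $\alpha^2 + 2\alpha - 6\norm{n}_{L^\infty(Q_T)} \ge 0$, i.e. when $\alpha \ge \sqrt{6\norm{n}_{L^\infty(Q_T)}+1}-1$. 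So under the stated hypothesis $g(\phi) > 0$ strictly, hence $\partial_t W > 0$ at any touching point. The positivity of $2w^2$ is the self-correcting mechanism, in analogy with the Riccati/characteristic estimate for scalar conservation laws, while the $\tfrac{1}{2t}$ in $\phi$ supplies the smoothing that frees the bound from any assumption on the initial slope.

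To pass from this differential inequality to the global bound I would show the infimum of $W$ over $(0,\infty)\times(0,T]$ cannot be negative. Two of the three boundaries are easy: as $x\to\infty$, \eqref{e:x2dxnVanish} gives $x^2 w \to 0$, so $w\to 0$ while $\phi\sim -\tfrac{5x}{2}\to -\infty$, hence $W\to+\infty$; and as $t\to 0^+$, standard interior parabolic estimates bound $|w|$ on compact $x$-sets by a multiple of $t^{-1/2}$, which is dominated by $\phi\sim-\tfrac1{2t}$, so again $W\to+\infty$. If the infimum were a negative value attained in the open region (or on $t=T$), that point would be a spatial minimum with $\partial_x W=0$, $\partial_x^2 W\ge 0$ and $\partial_t W\le 0$, contradicting $\partial_t W>0$ from the previous step.

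The hard part will be the degenerate boundary $x\to 0^+$, where $\phi$ stays finite ($=-\tfrac1{2t}-\tfrac\alpha2$) but there is no pointwise lower bound on $w=\partial_x n$: a priori the slope could behave like $-C/x^2$, and indeed the paper shows it can become singular at $x=0$, so a naive perturbation of the barrier by $-\epsilon/x$ or $-\epsilon/x^2$ will not beat it. I would handle this by exploiting the degeneracy exactly as in the comparison principle (Lemma~\ref{l:comparison}): the diffusion coefficient $x^2$ vanishes at $x=0$, so in an integrated (weak) formulation the boundary flux at $x=0$ is suppressed by the $x^2$ weight, and the first-order coefficient $x^2+2x+2n$ equals $2n\ge 0$ there, i.e. the drift points into the domain, so no boundary data at $x=0$ is needed and the comparison runs up to $x=0$. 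Concretely, I would test the equation for the negative part $(\phi-w)_+$ against a weight adapted to the $x^2$-degeneracy — or, equivalently, first establish \eqref{e:dxnLower} for regularized/approximating solutions whose slope is controlled near $x=0$ and then pass to the limit, using that the pointwise inequality \eqref{e:dxnLower} is stable under the convergence furnished by the $L^1$-contraction (Lemma~\ref{l:l1contract}) — and show this energy cannot manufacture a negative dip from the $x=0$ boundary. This degenerate-boundary analysis, rather than the (clean) barrier algebra, is where the genuine difficulty lies.
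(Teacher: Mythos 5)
Your core computation is exactly the paper's. The equation~\eqref{e:w} for $w = \partial_x n$, the barrier $\varphi = \tfrac\alpha2 + \tfrac{5x}2 + \tfrac1{2t}$, and the touching-point algebra (the cancellation of the $\tfrac1{2t^2}$ and $x^2$ terms, leaving $\tfrac{3x+1}{t} + \tfrac\alpha{t} + 3\alpha x + \tfrac12\alpha(\alpha+2) - 3n$, hence the threshold $\alpha \geq \sqrt{6\norm{n}_{L^\infty(Q_T)}+1}-1$) coincide term for term with the paper's~\eqref{e:ole1}--\eqref{e:ole2}; your treatment of the $x\to\infty$ boundary via~\eqref{e:x2dxnVanish} and of $t \to 0^+$ via the interior estimate $\abs{w} \lesssim t^{-1/2}$ likewise matches what the paper does (implicitly) on the truncated parabolic boundary.

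The genuine gap is at $x \to 0^+$, and you correctly flag it but neither of your two proposed fixes closes it. The weighted-energy test of $(\phi - w)_+$ requires integrating $x^2 \partial_x^2 w$ by parts, producing boundary terms involving $x^2 \partial_x w = x^2 \partial_x^2 n$, whose behavior as $x \to 0$ is precisely what is unknown at this stage; the only available control, the time-integrated statement~\eqref{e:a0} of Lemma~\ref{l:intJ0}, concerns $x^2\partial_x n$ (not $\partial_x^2 n$) and is itself proved \emph{using} the Oleinik inequality, so invoking anything in its vicinity is circular. Your second route defers everything to unspecified approximating solutions ``whose slope is controlled near $x=0$'': the truncated problems of~\cite{EscobedoHerreroEA98} carry boundary conditions at $x = \epsilon$ and furnish no slope control there, so the same difficulty reappears at the cut boundary. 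The paper's actual device, absent from your proposal, is to truncate to $Q'_\delta = (2\delta, 1/\delta)\times(2\delta, T)$ and augment the barrier by the auxiliary singular term $\psi \defeq \delta e^{30t}/(x-\delta)^4$ of~\eqref{e:z0psiDef}, whose pole sits at $x = \delta$, \emph{outside} the truncated domain. One verifies $2\psi^2 + \mathcal L_0 \psi > 0$ (the paper's~\eqref{e:ole3}; the strict positivity in your $g(\phi) > 0$ leaves room to absorb it), while on the cut boundary $x = 2\delta$ the barrier has size $\sim \delta^{-3}$, which dominates $\sup_t \abs{w(2\delta, t)}$ --- finite for each fixed $\delta$ by smoothness in the open quadrant, and of size $O(\delta^{-3/2})$ by interior estimates in the degenerate scaling $x \sim \delta$. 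This sidesteps the objection behind your remark that a perturbation by $-\epsilon/x$ or $-\epsilon/x^2$ ``will not beat'' the possible singularity of $w$ at $x=0$: the barrier never has to beat $w$ \emph{at} $x = 0$, only on the compact boundary segment $x = 2\delta$, and sending $\delta \to 0$ then yields~\eqref{e:dxnLower} at every interior point. Relatedly, your heuristic that the drift $x^2 + 2x + 2n$ ``points into the domain'' at $x=0$, so no boundary data is needed, is a plausibility argument, not a proof, for a degenerate parabolic operator; the $\psi$-barrier is exactly what converts it into one. Without this (or an equivalent) device your argument does not close.
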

\begin{remark*}
  While Lemma~\ref{l:oleinik} provides a lower bound for $\partial_x n$, there is certainly no upper bound.
  In fact, we expect $\partial_x n$ to become singular at $x = 0$ at the time when loss of photons commences.
\end{remark*}
\begin{proof}[Proof of Lemma~\ref{l:oleinik}]
  Differentiating~\eqref{e:komp} with respect to $x$ and setting~$w = \partial_x n$ shows
  \begin{equation}\label{e:w}
    \Dt w =x ^2 \Dxx w +2\paren[\Big]{ n +x+\frac{x^2}{2} }\Dx w +2w(w+2x-1)+2n\,.
  \end{equation}
  The main idea behind the proof is to construct a suitable sub-solution to~\eqref{e:w}.
  For this, let $\delta > 0$ and let $Q'_\delta = (2\delta, 1/\delta) \times (2\delta, T)$.
  Define $\ubar{w}$ by $\ubar{w} \defeq -(\varphi + \psi)$, where
  \begin{equation}\label{e:z0psiDef}
    \varphi \defeq  \frac{\alpha}2 + \frac{5 x}{2} +\frac{1}{2(t - \delta) }\,,
    \qquad\text{and}\qquad
    \psi \defeq \frac{\delta e^{30t}}{(x - \delta)^{4}}\,.
  \end{equation}
  We claim~$\ubar{w}$ is the desired sub-solution to~\eqref{e:w} in~$Q'_\delta$.

  To see this, define the linear differential operator $\mathcal L_0$ by
  $$
    \mathcal L_0 v \defeq \Dt v -x^2 \partial_x^2 v - 2 \paren[\Big]{ n+x + \frac{x^2}{2} } \Dx v
    +(1-2x)2v \,,
  $$
  and observe that~\eqref{e:w} implies
  $
    \mathcal L_0 w = 2w^2 + 2n\,.
  $
  Thus for $u=w-\ubar{w}$, we have 
  \begin{equation}\label{e:ole1}
    \mathcal L_0 u
      =  2(u+\ubar w)^2 + 2n -\mathcal L_0 \ubar{w} 
      =  2u^2 + 4u\ubar{w} + 2\ubar{w}^2 + 2n +\mathcal L_0 (\varphi + \psi)  \,.
  \end{equation}
  We will now find a lower bound for the right hand side of this.
  
  First, we note that $\varphi\psi > 0$ and $2\partial_x\varphi=5$ in $Q'_\delta$, hence
  \begin{align}
  \nonumber
     & 2\ubar{w}^2+2n+ \mathcal L_0 \varphi  
     \\ \nonumber
     & >
     2(\psi^2+\varphi^2)-3n + 2\varphi(1-2x) 
     -5\paren[\Big]{x+\frac{x^2}{2}}
     -\frac1{2(t-\delta)^2}
     \\ \nonumber
     & = 2\psi^2-3n + \frac{ 1+ \alpha +3x }{t-\delta}
     + 3\alpha x + \frac12\alpha(\alpha+2)
    \\ \label{e:ole2}
      & \geq  2\psi^2  - 3 \norm{n}_{L^\infty(Q_T)} + \frac12\alpha(\alpha+2)
      \geq 2\psi^2\,.
  \end{align}
  The last inequality above followed by our choice of~$\alpha$.

Next, we note that since $e^{30t}>1$ and $n\geq 0$ we have
\begin{align}
\nonumber
2\psi^2+\mathcal L_0\psi 
&= \psi\brak[\Big]
 {2\psi + 30 - \frac{20x^2}{(x-\delta)^2}+(2n+2x+x^2)\frac4{x-\delta} + 2-4x}
 \\\nonumber 
&> \psi\brak[\Big]
 {\frac{2\delta}{(x-\delta)^4} + 30 -  \frac{20x^2}{(x-\delta)^2}+(2x+x^2)\frac4{x} + 2-4x}
 \\
  \label{e:ole3}
&= \psi\brak[\Big]
 {\,\frac{2}{\delta^3}
 \paren[\Big]{\frac{\delta}{x-\delta}}^4 
 + 40 - 20 
 \paren[\Big]{1+\frac{\delta}{x-\delta}}^2\, } >0\,,
\end{align}
provided $x > \delta$, and $\delta$ is sufficient small.
Using~\eqref{e:ole2} and~\eqref{e:ole3} in~\eqref{e:ole1} implies
\begin{equation}\label{e:L0U}
  \mathcal L_0 u \geq 2u^2  + 4u\ubar{w} \geq 4 \ubar{w} u\,.
\end{equation}

Finally, let
\begin{equation*}
  \partial Q'_\delta = \paren[\big]{ \set{2\delta, \tfrac{1}{\delta} } \times  [2\delta, T]  } \cup \paren[\big]{ \paren{2\delta, \tfrac{1}{\delta}} \times \set{2\delta} }
\end{equation*}
denote the parabolic boundary of~$Q'_\delta$.
Using~\eqref{e:x2dxnVanish} and~\eqref{e:z0psiDef} we see that $u > 0$ on $\partial Q'_\delta$ when~$\delta$ is sufficiently small.
Moreover, since $\ubar{w}$ and $2x - 1$ are bounded in $Q'_\delta$, the minimum principle (see for instance~\cite[\S7.1.4]{Evans98}) applies to the inequality~\eqref{e:L0U} and guarantees $u \geq 0$ in $Q'_\delta$.
Sending~$\delta \to 0$ concludes the proof.
\end{proof}

One immediate consequence of Lemma~\ref{l:oleinik} is that for any~$t > 0$ the limit $\lim_{x \to 0^+} n_t(x)$ must exist.
This is the content of Lemma~\ref{l:n0exist}, and we prove it here.

\begin{proof}[Proof of Lemma~\ref{l:n0exist}]
  Notice that for any $t>0$, \eqref{e:dxnLower} implies 
  \begin{equation*}
  \partial_x\paren[\Big]{n_t+ x\paren[\Big]{\frac\alpha2 + \frac1{2t}}+\frac{5x^2}4} \ge 0.
  \end{equation*}
  This immediately implies $\lim_{x \to 0} n_t(x)$ exists as claimed. 
\end{proof}
\subsection{Loss Formula and Flux Behavior at \texorpdfstring{$x = 0$}{x=0}.}
Next we prove Proposition~\ref{p:lossFormula} that provides an explicit expression relating the decrease in the total photon number to the outflux of zero-energy photons.
\begin{proof}[Proof of Proposition~\ref{p:lossFormula}]
  Integrating~\eqref{e:komp} on~$(x, R) \times (s, t)$ we see
  \begin{equation*}
    \int_x^R (n_t(y) - n_s(y)) \, dy
      = \int_s^t J(R, n_\tau) \, d\tau - \int_s^t (x^2 \partial_x n_\tau + (x^2 - 2x) n_\tau + n_\tau^2) \, d\tau \,.
  \end{equation*}
  The first term on the right vanishes as $R \to \infty$ (equation~\eqref{jr}).
  The second term requires care: we can't directly send $x \to 0$ as we don't know the behavior of~$x^2 \partial_x n_\tau(x)$ at this stage.
  We instead average this term for~$x \in (\epsilon, 2\epsilon)$, and then send $\epsilon \to 0$.
  Using the notation~$\avint_\epsilon^{2\epsilon} f \, dx$ to denote the averaged integral~$\frac{1}{\epsilon} \int_\epsilon^{2\epsilon} f \, dx$, we observe
  \begin{equation}\label{e:avN}
    \avint_\epsilon^{2\epsilon}
      \int_x^\infty (n_t(y) - n_s(y)) \, dy \, dx
      = - \int_s^t \avint_{\epsilon}^{2\epsilon} (x^2 \partial_x n_\tau + (x^2 - 2x) n_\tau + n_\tau^2) \, dx \, d\tau \,.
  \end{equation}
  Notice that for $\tau>0$ fixed,
  \begin{equation*}
    \avint_\epsilon^{2\epsilon} x^2 \partial_x n_\tau
    = \frac{1}{\epsilon} \brak[\Big]{ x^2 n_\tau }_\epsilon^{2\epsilon} 
      - 2 \avint_\epsilon^{2\epsilon} x n_\tau \, dx\,,
  \end{equation*}
  which is uniformly bounded and vanishes as $\epsilon \to 0$.
  The dominated convergence theorem now implies
  \begin{equation*}
    \lim_{\epsilon \to 0} \int_s^t \avint_\epsilon^{2\epsilon} x^2 \partial_x n_\tau \, d\tau = 0\,.
  \end{equation*}
  Of course, Lemma~\ref{l:n0exist} and the dominated convergence theorem also implies
  \begin{equation*}
    \lim_{\epsilon \to 0} \int_s^t \avint_{\epsilon}^{2\epsilon} ((x^2 - 2x) n_\tau + n_\tau^2) \, dx \, d\tau
    = \int_s^t n_\tau(0)^2 \, d\tau\,.
  \end{equation*}
  By the fundamental theorem of calculus the left hand side of~\eqref{e:avN} converges to $N(n_t) - N(n_s)$ as $\epsilon \to 0$.
  Thus sending~$\epsilon \to 0$ in~\eqref{e:avN} yields~\eqref{e:loss} as claimed.
\end{proof}

Using the loss formula (Proposition~\ref{p:lossFormula}) in conjunction with the Oleinik inequality (Lemma~\ref{l:oleinik}) we can now prove Lemma~\ref{l:intJ0} concerning convergence of the flux $J(x, n_\tau)$ as $x \to 0$.
\begin{proof}[Proof of Lemma~\ref{l:intJ0}]
  Observe first
  \begin{equation*}
    \lim_{x \to 0} \int_s^t J(x, n_\tau) \, d\tau
      = \lim_{x \to 0} \int_x^\infty \paren[\big]{ n_s(y) - n_t(y) }
      = N(n_s) - N(n_t)
      = \int_s^t n_\tau(0)^2 \, d\tau \,,
  \end{equation*}
  proving~\eqref{e:intJ0}.
  Here the first equality follows from~\eqref{e:komp} and~\eqref{jr}, and the last equality follows from Proposition~\ref{p:lossFormula}.

  In order to prove the stronger convergence stated in~\eqref{e:a0}, we require $s > 0$.
  Let
  \begin{equation}\label{e:varphiDef}
    \varphi_\tau(y) \defeq  \frac{\alpha}2 + \frac{5 y}{2} +\frac{1}{2\tau }\,,
  \end{equation}
  where~$\alpha$ is as in~\eqref{e:dxnLower} and note that Lemma~\ref{l:oleinik} implies $\partial_x n_t \geq -\varphi_t $.
  Thus
  \begin{equation*}
    \abs{\partial_x n_t} \leq \partial_x n_t + 2 {\varphi_t}\,,
  \end{equation*}
  and hence
  \begin{align*}
    \int_s^t x^2 \abs{\partial_x n_\tau } \, d\tau
      &\leq \int_s^t x^2 \partial_x n_\tau  \, d\tau
	+ 2 \int_s^t x^2 {\varphi_\tau } \, d\tau
    \\
      &= \int_s^t (J(x, n_\tau) - n_\tau^2(x) ) \, d\tau
	+ \int_s^t \paren[\big]{
	      2 x^2 {\varphi_\tau}
	      -  (x^2 - 2x) n_\tau
	    }
	    \, d\tau\,.
  \end{align*}
  Using \eqref{e:intJ0} and the dominated convergence theorem the right hand side vanishes as $x \to 0$.
  This proves the first identity in~\eqref{e:a0}.
  The second identity follows immediately from the first and the dominated convergence theorem.
\end{proof}

\subsection{Contraction (Lemma~\ref{l:l1contract}).}
Our aim in this section is to prove that the solution operator to~\eqref{e:komp}--\eqref{e:noflux} is an~$L^1$ contraction (Lemma~\ref{l:l1contract}).
\begin{lemma}\label{l:contraction}
  Let $T > 0$, $m, n \in L^\infty(Q_T)$ be two nonnegative  solutions to \eqref{e:komp}--\eqref{e:noflux} whose initial data satisfies~\eqref{e:x2n}.
  For any $0 < r < R < \infty$, and any $0<s<t \leq T$, we have
  \begin{align}\label{eqn:contract1}
    \nonumber
    \MoveEqLeft
    \int^R_r \abs{ n_t(x)-m_t(x)} \, dx
    + \int^t_s\sgn\paren[\big]{
	n_\tau(r)-m_\tau(r)}
      \paren[\big]{
	J(r,n_\tau)-J(r,m_\tau)
      }
      \, d\tau
    \\
    \nonumber
    &\leq \int^R_r \abs{ n_s(x)-m_s(x) } \, dx
    \\
    &\phantom{\leq}
      +\int^t_s \sgn\paren[\big]{ n_\tau(R)-m_\tau(R) }
	\paren[\big]{ J(R,n_\tau)-J(R,m_\tau) }
      \, d\tau \, .
  \end{align}
\end{lemma}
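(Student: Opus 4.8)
The plan is to prove this local contraction by a direct energy-type computation on the interior rectangle, using the fact that on the closure of $(r,R)\times(s,t)$, with $r,s>0$, both $n$ and $m$ are classical $C^{2,1}$ solutions by Theorem~\ref{t:exist}. This lets me avoid the doubling-of-variables/Kru\v zkov machinery entirely and work with genuine derivatives. Writing $u \defeq n - m$, the difference of fluxes organizes cleanly as
\[
  J(x,n) - J(x,m) = x^2 \Dx u + b\, u, \qquad b \defeq (x^2 - 2x) + (n+m),
\]
and $u$ solves $\Dt u = \Dx\paren{J(x,n)-J(x,m)}$ classically in the interior.

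First I would regularize the absolute value: fix a smooth convex approximation $\eta_\eps$ of $\abs{\cdot}$ with $\eta_\eps(0)=0$, $\eta_\eps''\ge 0$, and $\eta_\eps'\to\sgn$, $\eta_\eps\to\abs{\cdot}$ as $\eps\to 0$ (for concreteness $\eta_\eps(u)=\sqrt{u^2+\eps^2}-\eps$). Multiplying the equation for $u$ by $\eta_\eps'(u)$ and using the chain rule gives the divergence identity
\[
  \Dt \eta_\eps(u) = \Dx\brak{\eta_\eps'(u)\paren{J(x,n)-J(x,m)}} - \eta_\eps''(u)(\Dx u)\paren{x^2\Dx u + b\,u}.
\]
The crucial point is that the last term splits into $x^2\eta_\eps''(u)(\Dx u)^2 \ge 0$, which carries the correct sign and may be discarded to yield an inequality, plus a cross term $b\,u\,\eta_\eps''(u)\Dx u$.

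Then I would integrate this inequality over $(r,R)\times(s,t)$. The divergence term produces exactly the lateral boundary flux contributions at $x=r$ and $x=R$; since $\eta_\eps'(u)\to\sgn(u)$ pointwise with $\abs{\eta_\eps'(u)\paren{J(x,n)-J(x,m)}}\le\abs{J(x,n)-J(x,m)}$ bounded on the compact lateral boundaries, dominated convergence sends these to the $\sgn$-weighted flux terms in~\eqref{eqn:contract1}, while the time-slice terms converge to $\int_r^R\abs{u_t}$ and $\int_r^R\abs{u_s}$. Rearranging then gives the claim.

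The one step that requires genuine care rather than bookkeeping is showing the cross term vanishes in the limit. For this I would write $u\,\eta_\eps''(u)\Dx u = \Dx\zeta_\eps(u)$ with $\zeta_\eps(u)\defeq u\eta_\eps'(u)-\eta_\eps(u)$ and integrate by parts in $x$,
\[
  \int_r^R b\,u\,\eta_\eps''(u)\Dx u\,dx = \big[\,b\,\zeta_\eps(u)\,\big]_r^R - \int_r^R (\Dx b)\,\zeta_\eps(u)\,dx.
\]
Because $\zeta_\eps\to 0$ uniformly (indeed $0\le\zeta_\eps\le\eps$ for the sample mollifier above) while $b$ and $\Dx b = 2x-2+\Dx n+\Dx m$ stay bounded on the compact region, both terms vanish as $\eps\to 0$ after integrating in $\tau$. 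This is precisely where $r>0$ and $s>0$ are essential: they keep $\Dx n$ and $\Dx m$ controlled away from the degenerate boundary $x=0$, so that $\Dx b$ is bounded and the integration by parts is legitimate.
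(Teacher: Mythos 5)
Your proof is correct, and its skeleton is the same as the paper's: regularize $\abs{\cdot}$, multiply the equation for $u = n-m$ by the regularized sign, integrate over $(r,R)\times(s,t)$, recover the two $\sgn$-weighted flux terms from the lateral boundaries by dominated convergence, and reduce everything to showing that an interior error term vanishes as $\eps \to 0$. Where you genuinely diverge is the mechanism for that last step. The paper keeps the full interior term $-\sgn_\eps'(u)\,\Dx u\,(x^2 \Dx u + b u)$ and applies Young's inequality, absorbing the cross term into the good piece $x^2\sgn_\eps'(u)(\Dx u)^2$; the leftover is controlled by $\tfrac{1}{2r^2}\,u^2\sgn_\eps'(u)\,(R^2 + \norm{n}_{L^\infty(Q_T)} + \norm{m}_{L^\infty(Q_T)})^2$ together with the elementary bound $u^2\sgn_\eps'(u) \leq C\eps$ (the paper's $\sgn_\eps'$ is supported in $\set{\abs{u}\le\eps}$). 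That route uses only the $L^\infty$ bounds on $n$ and $m$ --- no derivative information at all --- at the price of a constant degenerating like $r^{-2}$, which is harmless since $r$ is fixed before $\eps \to 0$. You instead discard the diffusion term and integrate the cross term by parts a second time through the convex-defect function $\zeta_\eps(u) = u\eta_\eps'(u) - \eta_\eps(u)$ with $0 \leq \zeta_\eps \leq \eps$; this is a clean and correct alternative, but it transfers the burden onto $\Dx b = 2x - 2 + \Dx n + \Dx m$, so you need interior bounds on $\Dx n$ and $\Dx m$. These are legitimately available on the compact rectangle from the $C^{2,1}((0,\infty)^2)$ regularity in Theorem~\ref{t:exist}, though note that for the paper the hypotheses $r,s>0$ are needed only so the classical computation makes sense on the closed rectangle (and for the $r^{-2}$ factor), not to control slopes. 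The trade-offs: the paper's estimate is quantitative in the sup norms alone and reapplies verbatim when the same computation is reused for the comparison principle (Lemma~\ref{l:comparison}) with sub- and super-solutions that are only piecewise smooth across corner curves, where $\Dx b$ is merely bounded almost everywhere; your argument also survives there, since the corner conditions \eqref{e:cornerSubSol}--\eqref{e:cornerSupSol} keep $b$ Lipschitz in $x$, but that requires an extra remark, and your error constant depends on interior derivative bounds that are qualitative rather than explicit in the data. In exchange, your route avoids Young's inequality entirely and makes the $O(\eps)$ size of the interior error transparent rather than one-sided.
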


Momentarily postponing the proof of Lemma~\ref{l:contraction}, we prove Lemma~\ref{l:l1contract}.
\begin{proof}[Proof of Lemma~\ref{l:l1contract}]
  In order to prove Lemma~\ref{l:l1contract}, we only need to send $r \to 0$ and $R \to \infty$ in~\eqref{eqn:contract1}.
  Using~\eqref{e:x2nPtwise}, \eqref{e:x2dxnVanish} the last term on the right of~\eqref{eqn:contract1} vanishes as $R \to \infty$.

  The convergence as $r \to 0$ requires care: While $J(r, n_\tau) \to n_\tau(0)^2$ in $L^1((s, t))$, the sign function is discontinuous and the pre-factor $\sign(n_\tau(r) - m_\tau(r))$ may not converge.
  Expanding out the flux explicitly, however, allows us to still compute the limit as $r \to 0$.
  Indeed let~$J_{\lin}$ be the linear terms in the flux~$J$.
  That is, define
  \begin{equation}\label{e:Jlin}
    J_\lin(x, n) \defeq x^2 \partial_x n + (x^2 - 2x) n\,,
  \end{equation}
  and note $J(x, n) = J_\lin(x, n) + n^2$.
  Now
  \begin{align*}
    \MoveEqLeft
      \int_s^t
	\sign( n_\tau(r) - m_\tau(r) ) (J(r, n_\tau) - J(r, m_\tau))
	\, d\tau
      \\
      &= \int_s^t
	  \abs{n_\tau(r) - m_\tau(r)}^2
	    \, d\tau
      \\
	&\qquad+ \int_s^t
	  \sign( n_\tau(r) - m_\tau(r) ) (J_{\lin}(r, n_\tau) - J_{\lin}(r, m_\tau))
	  \, d\tau.
  \end{align*}
  The first term on the right converges to $\int_s^t (n_\tau(0)^2 - m_\tau(0)^2) \, d\tau$ as $r \to 0$.
  For any $0 < s \leq t$, \eqref{e:a0} and the dominated convergence theorem imply that the second term vanishes as $r \to 0$.
  This proves~\eqref{eqn:contract} for any $0 < s \leq t$.
  Using continuity in $L^1$ and sending $s \to 0$ concludes the proof.
\end{proof}

It remains to prove Lemma~\ref{l:contraction}.
Before going through the proof, we first perform a formal calculation showing why~\eqref{eqn:contract1} is expected.
Let $w=n-m$ and note
\begin{equation}\label{e:w1}
  \partial_t w -\partial_x (J(x, n)-J(x, m))=0 \,.
\end{equation}
Multiplying by~$\sign(w)$ and integrating in space gives
\begin{align}\label{e:wFormal}
  \nonumber
  \MoveEqLeft
  \int_r^R \partial_t \abs{w} \, dx
    - \brak[\Big]{ \sign(w) ( J(x, n)  - J(x, m) ) }_r^R
  \\
    &\qquad= -\int_r^R \partial_x \sign(w) ( J(x, n)  - J(x, m) ) \, dx \,.
\end{align}
We will show (using the structure of~$J$) that the right hand side is negative.
Once this is established, integrating in time will yield~\eqref{eqn:contract1} as desired.

To make this argument rigorous, we will regularize $\sign(w)$, and explicitly check that the right hand side of~\eqref{e:wFormal} is indeed negative.
As we will shortly see we only obtain a one sided bound for this term after regularization: while it is certainly negative, it need not vanish.

\begin{proof}[Proof of Lemma~\ref{l:contraction}]
  Let $\sgn_\eps$ be an odd smooth increasing function on $\mathbb{R}$ such that
  \begin{equation}\label{eqn:sgnepsdef}
    \sgn_\eps(x)\defeq
      \begin{cases}
	1 & x>\eps \,,\\
	-1 & x < -\eps\,,
  \end{cases}
  \end{equation}
  given by $\sgn_\eps(x)=2\int_0^x\eta_\eps(z)\,dz$ where $\eta_\eps$ is a standard mollifier~\cite{Evans98},
  and define
  \begin{equation*}
    \zeta_{\eps}(x) = \int_0^x \sgn_\epsilon(y) \, dy \,.
  \end{equation*}
  Note that~$\zeta_\epsilon$ is a smooth convex even function with~$\zeta_\epsilon(0) = 0$.

  Multiplying equation~\eqref{e:w1} by~$\sign_\epsilon(w)$ and integrating by parts yields
  \begin{align*}
    \MoveEqLeft
    \int_s^t \int_r^R \partial_t \zeta_\eps(w) \, dx \, d\tau - \int_s^t \sgn_\eps(w) (J(x, n)-J(x, m)\Bigr|_r^R d\tau
    \\
    & =- \int_s^t \int_r^R \partial_x \sgn_\eps(w)
      \paren[\big]{ J(x, n)-J(x, m) } \,  dx \, d\tau \,.
  \end{align*}
  We note that $\zeta_\epsilon(w)$ increases to $|w|$, and $\sgn_\eps(w) \to \sgn(w)$.
  Thus to complete the proof, it suffices to find an upper bound for the right hand side that vanishes as $\epsilon \to 0$.
  Using Young's inequality, we have  
  \begin{align*}
  \MoveEqLeft
  -\int_s^t \int_r^R  \sgn_\eps'(w)\partial_x w (x^2 \partial_x w +(x^2-2x)w +w(m+n)) \, dx \, d\tau \\
  & \leq - \int_s^t \int_r^R  \sgn_\eps'(w) \left(\frac{x^2}{2} (\partial_x w)^2 -\frac{w^2}{2x^2} (x^2-2x  +m+n)^2 \right) \, dx \, d\tau\\
  &  \leq  \frac{1}{2r^2} \int_s^t \int_r^R \sgn_\eps'(w) w^2 (R^2+ \norm{m}_{L^\infty(Q_T)} + \norm{n}_{L^\infty(Q_T)} )^2 \, dx \, d\tau. 
\end{align*}
Note that for any $z \in \R$, we must have $z^2 \sign_\epsilon'(z) \leq \epsilon$.
Indeed, if $\abs{z} \geq \epsilon$, then $\sign_\epsilon'(z) = 0$.
On the other hand, if $\abs{z} < \epsilon$, then $z^2 \sign_\epsilon'(z) =2\epsilon^2 \eta_\eps(z) \leq C\epsilon$.
Using this in the above yields
\begin{multline*}
  -\int_s^t \int_r^R  \sgn_\eps'(w)\partial_x w (x^2 \partial_x w +(x^2-2x)w +w(m+n)) \, dx \, d\tau \\
  \leq C\eps\frac{(R^2+ \norm{m}_{L^\infty(Q_T)} + \norm{n}_{L^\infty(Q_T)})^2}{2r^2}(t-s)(R-r) \,,
  \end{multline*}
  which vanishes as~$\epsilon \to 0$.
  This completes the proof.
\end{proof}

\subsection{Comparison (Lemma~\ref{l:comparison}).} 
This section is devoted to the proof of the comparison principle.
We begin by stating the definitions of the sub and super solutions we use.
\begin{definition}\label{d:subsol}
  Let $Q = (0, \infty) \times (0, \infty)$, and $n \in C^{2,1}(Q)$ be a function such that $n_t \to n_0$ in $L^1([0, \infty))$, and whenever $0 < s \leq t$ we have
  \begin{equation}\label{e:vanish0}
    \lim_{x \to 0^+} n_t(x) = n_t(0)\,,
    \qquad
    \lim_{x \to 0^+} \int_s^t \abs{x^2 \partial_x n_\tau} \, d\tau  = 0\,.
  \end{equation}
  We say $n$ is a \emph{sub-solution} to~\eqref{e:komp}--\eqref{e:noflux} if whenever $0 < s \leq t$ we have
  \begin{gather*}
    \partial_t n  \leq \partial_x J(x, n)\,,
    \qquad\text{and}\qquad
    \limsup_{x \to \infty} \int_s^t J(x, n_\tau)\, d\tau \leq 0\,.
  \end{gather*}
  We say $n$ is a~\emph{super-solution} to~\eqref{e:komp}--\eqref{e:noflux} if whenever $0 < s \leq t$ we have
  \begin{equation*}
    \partial_t n  \geq \partial_x J(x, n)\,,
    \qquad\text{and}\qquad
    \liminf_{x \to \infty} \int_s^t J(x, n_\tau)\, d\tau \geq 0\,.
  \end{equation*}
\end{definition}
\begin{remark}
  We note that the (globally unique) solutions provided by Theorem~\ref{t:exist} are both sub and super-solutions in the sense of Definition~\ref{d:subsol}.
  Certainly by~\eqref{e:komp} we have $\partial_t n = \partial_x J$.
  For the flux at infinity note that the vanishing conditions~\eqref{e:x2nPtwise} and \eqref{e:x2dxnVanish} imply
  \begin{equation*}
    \lim_{x \to \infty} \int_s^t J(x, n_\tau) \, d\tau = 0\,,
  \end{equation*}
  as needed.
\end{remark}
\begin{remark}
  As with the classical theory, we can relax the requirement that $n \in C^{2,1}(Q)$.
  For our purposes it will suffice to consider functions that are $C^{2,1}$, except on finitely many, non-degenerate, disjoint $C^2$ curves of the form $x = \gamma(t)$.
  At each point $(x, t)$ on one of these curves  we require continuity.
  Moreover, we require sub-solutions to satisfy the ``V corner'' condition
  \begin{subequations}
  \begin{equation}\label{e:cornerSubSol}
    -\infty < \partial_{x}^- n_{t}(x) \leq \partial_{x}^+ n_{t}(x) < \infty\,,
  \end{equation}
  and super-solutions to satisfy the ``inverted V corner'' condition
  \begin{equation}\label{e:cornerSupSol}
    \infty > \partial_{x}^- n_{t}(x) \geq \partial_{x}^+ n_{t}(x) > -\infty\,.
  \end{equation}
  \end{subequations}

  While a more general notion based on viscosity solutions is possible, it is unnecessary for our purposes since all the sub and super-solutions we construct are in the above form.
\end{remark}
\medskip

We now provide some intuition as to why the comparison principle (Lemma~\ref{l:comparison}) holds.
Suppose momentarily
\begin{equation}\label{e:nmStrict}
  \partial_t m \leq \partial_x J(x, m)\,,
  \qquad
  \partial_t n > \partial_x J(x, n)\,,
\end{equation}
and that $x \partial_x m$, $x^2 \partial_x^2 m$, $x \partial_x n$ and $x^2 \partial_x^2 n$ all vanish as $x \to 0$.
Let $t_0$ be the first time at which $n_{t_0}(0) = m_{t_0}(0)$.
A standard comparison principle argument (see for instance~\cite[Th. 2.6.16]{Friedman64}) shows $n \geq m$ for all $(x, t) \in [0, t_0] \times [0, \infty)$, and hence $\partial_t m_{t_0}(0) \geq \partial_t n_{t_0}(0)$ and $\partial_x (m_{t_0}(0)^2) \leq \partial_x (n_{t_0}(0)^2)$.
Using~\eqref{e:nmStrict} and our vanishing assumptions this would imply $\partial_t m_{t_0}(0) < \partial_t n_{t_0}(0)$, which is a contradiction.

In order to make the above argument rigorous, we would have to show that the terms $x \partial_x n$ and $x^2 \partial_x n$ vanish as $x \to 0$.
Presently we don't know whether or not either of these conditions holds, and the most we can prove (equation~\eqref{e:a0}) is not strong enough to make the above argument work.
To circumvent this issue, we instead prove Lemma~\ref{l:comparison} using the technique used to prove the weak maximum principle.

\begin{proof}[Proof of Lemma~\ref{l:comparison}]
  We begin by providing a formal argument.
  Let $w = n - m$ and observe
  \begin{equation*}
    \partial_t w - \partial_x (J(x, n) - J(x, m)) \geq 0\,.
  \end{equation*}
  Multiplying this by ~$-\one_{\set{w\leq 0}}$ and integrating yields, 
  since $w^-= -\one_{\set{w\leq 0}} w$, 
  \begin{align*}
    \MoveEqLeft
    \int_0^\infty \partial_t w^- \, dx
      + \brak[\Big]{ \one_{\set{w \leq 0}} ( J(x, n)  - J(x, m) ) }_0^\infty
    \\
      &\qquad\leq \int_0^\infty \partial_x \one_{\set{w \leq 0}} ( J(x, n)  - J(x, m) ) \, dx \,.
  \end{align*}
  Since $n$ is a super-solution and~$m$ is a sub-solution, we know
  \begin{equation*}
    \liminf_{x \to \infty} J(x, n) - J(x, m) \geq 0 \,.
  \end{equation*}
  Consequently,
  \begin{align*}
    \MoveEqLeft
    \int_0^\infty \partial_t w^- \, dx
      - \one_{\set{w \leq 0}} 
      (n(0)^2 - m(0)^2)
      \leq \int_0^\infty \partial_x \one_{\set{w \leq 0}} ( J(x, n)  - J(x, m) ) \, dx \,.
  \end{align*}
  As before, we claim the right hand side is negative.
  Once this is established, integrating in time immediately yields
  \begin{equation}\label{e:wMinus}
    \int_0^\infty w_t(x)^- \, dx
    + \int^t_s w_\tau(0)^- (n_\tau(0) + m_\tau(0) ) \, d\tau
    \leq \int_0^\infty w_s(x)^- \, dx\,.
  \end{equation}
  Sending $s \to 0$ we see that $w_t^- = 0$ for all $t > 0$, forcing $m_t(x) \leq n_t(x)$ for all $x, t \geq 0$.

  To make the above formal argument rigorous, we use the same regularization procedure as Lemma~\ref{l:contraction}.
  Let $0 < r < R < \infty$, and $0 < s < t \leq T$, and let $\mathcal H_\epsilon$ be a smooth increasing function such that
  \begin{equation*}
    \mathcal H_\epsilon(x) \defeq
      \begin{cases}
	-1 & x < -\epsilon  \,,
	\\
	0 & x > 0 \,,
      \end{cases}
  \end{equation*}
  given by $\mathcal H_\eps(x) = \int_{-\eps}^{\eps+2x}\eta_\eps(z)\,dz$ from the standard mollifier $\eta_\eps$,
  and let
  \begin{equation*}
    \zeta_\epsilon(x) = \int_0^x \mathcal H_\epsilon(y) \, dy\,.
  \end{equation*}
  By following the proof of Lemma~\ref{l:contraction}, we deduce an analog
  to \eqref{eqn:contract1}, namely
  \begin{align}\label{eqn:compare1}
    \nonumber
    \MoveEqLeft
    \int^R_r w_t(x)^- \, dx
    - \int^t_s
    \one_{\set{w \leq 0}} 
      \paren[\big]{
	J(r,n_\tau)-J(r,m_\tau)
      }
      \, d\tau
    \\
    &\leq \int^R_r w_s(x)^- \, dx
      -\int^t_s \one_{\set{w \leq 0}} \paren[\big]{ J(R,n_\tau)-J(R,m_\tau) }
      \, d\tau \, .
  \end{align}
Now sending~$r \to 0$ (using~\eqref{e:vanish0}) and~$R \to \infty$ as in the proof of Lemma~\ref{l:l1contract}, we obtain~\eqref{e:wMinus} as desired.
  This finishes the proof.
\end{proof}

\subsection{Pointwise Bounds.}\label{s:pointwise}

The comparison principle (Lemma~\ref{l:comparison}) allows us to easily obtain pointwise upper bounds, provided we can find suitable super-solutions.
The key to many of our estimates is a family of stationary super-solutions that allows us time independent bounds on solutions with exponentially decaying initial data.
We present this next.
\begin{figure}[htb]
  \includegraphics[width=.75\linewidth]{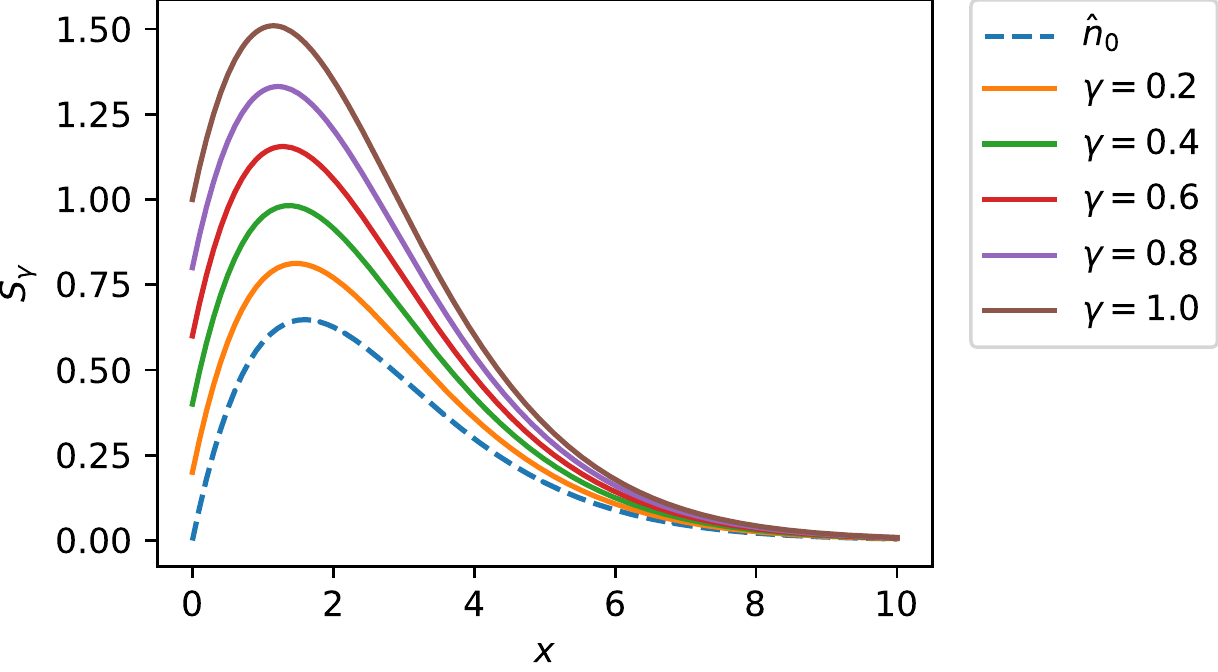}
  \caption{Plots of the stationary super-solution~$S_\gamma$ for various values of~$\gamma$.}
\end{figure}

\begin{lemma}\label{l:uss}
  For any~$\gamma \geq 0$ the function $S_\gamma$ defined by
  \begin{equation}\label{d:S}
    S_\gamma(x) =\hat n_0(x)  + \gamma m(x)\,,
    \quad\text{where}\quad
    m(x)
      =\frac{x^2e^x}{(e^x-1)^2}\,,
  \end{equation}
  is a stationary super-solution to~\eqref{e:komp}--\eqref{e:noflux}.
\end{lemma}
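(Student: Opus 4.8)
The plan is to exploit that $S_\gamma$ is stationary, so the super-solution requirements of Definition~\ref{d:subsol} collapse to three items: the differential inequality $0 = \partial_t S_\gamma \geq \partial_x J(x, S_\gamma)$, i.e.\ $\partial_x J(x, S_\gamma) \leq 0$; the flux condition $\liminf_{x\to\infty}\int_s^t J(x, S_\gamma)\,d\tau \geq 0$; and the regularity and vanishing hypotheses of~\eqref{e:vanish0}. The heart of the matter is an explicit computation of the flux, and I claim
\begin{equation*}
  J(x, S_\gamma) = \gamma^2 m(x)^2 \geq 0 \,.
\end{equation*}
Everything else follows once this identity and the monotonicity of $m$ are established.

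To prove the flux identity I would first record the algebraic rearrangement (the same one that turns~\eqref{e:kompaneetsF} into~\eqref{e:komp}): for any $n$, writing $f = n/x^2$, the flux in~\eqref{e:komp} factors as $J(x, n) = x^4\paren[\big]{\partial_x f + f + f^2}$. For $S_\gamma$ we have $f = \hat f_0 + \gamma v$, where $\hat f_0 = 1/(e^x-1)$ and $v \defeq m/x^2 = \hat f_0(1+\hat f_0)$. Expanding $\partial_x f + f + f^2$ in powers of $\gamma$, the $O(1)$ term is $\partial_x \hat f_0 + \hat f_0 + \hat f_0^2 = 0$ since $\hat f_0$ solves the equilibrium ODE, the $O(\gamma^2)$ term is $v^2$, and the $O(\gamma)$ term is $\partial_x v + (1 + 2\hat f_0)v$. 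The key observation is that this linear term vanishes: $v = -\partial_x \hat f_0$ is (up to sign) the derivative with respect to $\mu$ of the one-parameter family $\hat f_\mu(x) = \hat f_0(x+\mu)$ of equilibria, hence lies in the kernel of the operator linearizing the equilibrium ODE about $\hat f_0$. A direct check using $\partial_x \hat f_0 = -\hat f_0(1+\hat f_0)$ confirms $\partial_x v = -(1+2\hat f_0)v$. Thus $\partial_x f + f + f^2 = \gamma^2 v^2$, and multiplying by $x^4$ gives $J(x, S_\gamma) = \gamma^2 (x^2 v)^2 = \gamma^2 m^2$.

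With the flux identity in hand, the differential inequality becomes $\partial_x J(x, S_\gamma) = 2\gamma^2 m\, m' \leq 0$, which reduces (since $m \geq 0$) to showing that $m$ is non-increasing on $(0,\infty)$. Differentiating $m = x^2 \hat f_0(1+\hat f_0)$ and again using $\partial_x\hat f_0 = -\hat f_0(1+\hat f_0)$ yields $m' = x\,\hat f_0(1+\hat f_0)\brak[\big]{2 - x\coth(x/2)}$, so the sign of $m'$ is that of $2 - x\coth(x/2)$. Setting $u = x/2$, I would reduce this to the elementary inequality $u\coth u > 1$ for all $u > 0$, which holds because $u\cosh u - \sinh u$ vanishes at $0$ and has positive derivative $u\sinh u$. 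Hence $m' < 0$ and $\partial_x J \leq 0$. The flux condition at infinity is then immediate from $J = \gamma^2 m^2 \geq 0$. Finally I would verify the remaining hypotheses of Definition~\ref{d:subsol}: $S_\gamma$ is smooth and $t$-independent, it lies in $L^1$ (both $\hat n_0$ and $m$ are bounded near $0$ and decay like $x^2 e^{-x}$ at infinity), and since $\partial_x S_\gamma$ is bounded near $x = 0$ we have $x^2\partial_x S_\gamma \to 0$, giving the vanishing condition in~\eqref{e:vanish0}.

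The main obstacle is the flux computation, and in particular recognizing the cancellation of the $O(\gamma)$ term via the kernel of the linearized equilibrium operator; the monotonicity of $m$ is the only other nontrivial point, and it is an elementary calculus estimate.
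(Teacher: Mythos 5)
Your proof is correct, and it rests on the same two pillars as the paper's own argument: the flux identity $J(x, S_\gamma) = \gamma^2 m^2$ and the monotonicity $\partial_x m \leq 0$, followed by the observations that $\partial_x J(x,S_\gamma) = 2\gamma^2 m\,\partial_x m \leq 0$ and that $J(x,S_\gamma) \geq 0$ handles the flux condition at infinity. The difference lies in how the identity is obtained. The paper works in the $n$-variable with the splitting $J = J_\lin + n^2$ and \emph{derives} $m$: demanding that the $O(\gamma)$ terms cancel yields the linear ODE $x^2 \partial_x m + (x^2 - 2x + 2\hat n_0)m = 0$, which is then solved with the normalization $m(0) = 1$ to produce the formula in~\eqref{d:S}. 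You instead \emph{verify} the cancellation in the $f$-variable via the factorization $J(x,n) = x^4(\partial_x f + f + f^2)$, and your identification of $v = m/x^2 = -\partial_x \hat f_0$ as the tangent (up to sign) to the equilibrium family $\hat f_\mu(x) = \hat f_0(x+\mu)$ at $\mu = 0$, hence a kernel element of the linearized equilibrium operator $\partial_x + (1 + 2\hat f_0)$, is a genuinely illuminating addition: it explains \emph{why} the paper's ODE admits this closed-form solution, and why $S_\gamma$ can be read as a first-order extrapolation of $\hat n_\mu$ past $\mu = 0$. For the monotonicity, the two arguments prove the identical inequality in different clothing: multiplying your condition $x \coth(x/2) > 2$ through by $e^x - 1$ gives exactly the paper's $g(x) \defeq (2-x)(e^x - 1) - 2x < 0$; the paper deduces it from $g(0) = g'(0) = 0$ and $g''(x) = -x e^x \leq 0$, while you reduce to $u \coth u > 1$ via the monotonicity of $u\cosh u - \sinh u$ — both are two-line calculus checks of equal weight. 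Finally, your explicit verification of the remaining hypotheses of Definition~\ref{d:subsol} (smoothness, $L^1$ membership, and the vanishing conditions~\eqref{e:vanish0}, which hold since $\hat n_0$ and $m$ extend analytically to $x = 0$) is more careful than the paper's proof, which records only the two flux conditions; including these checks is harmless and makes the application of the comparison principle in Corollary~\ref{c:Sbound} fully airtight.
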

An immediate consequence of Lemmas~\ref{l:comparison} and~\ref{l:uss} is a uniform in time upper bound for any solution of~\eqref{e:komp}--\eqref{e:noflux} with sufficiently rapidly decaying initial data.
\begin{corollary}\label{c:Sbound}
  Let $n$ be the solution to~\eqref{e:komp}--\eqref{e:noflux} with initial data~$n_0 \geq 0$.
  If for some~$\gamma \geq 0$ we have~$n_0 \leq S_\gamma$, then we must have $n_t(x) \leq S_\gamma(x)$ for all $t \geq 0$, $x \geq 0$.
\end{corollary}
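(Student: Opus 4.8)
The plan is to obtain Corollary~\ref{c:Sbound} as a direct application of the weak comparison principle (Lemma~\ref{l:comparison}), using the fact---supplied by Lemma~\ref{l:uss}, which I am free to assume---that $S_\gamma$ is a stationary super-solution. First I would observe that the hypothesis $n_0 \leq S_\gamma$ already guarantees the solution is well posed: since $\hat n_0(x)$ and $m(x) = x^2 e^x/(e^x-1)^2$ both decay like $x^2 e^{-x}$ at infinity, we have $S_\gamma(x) \sim (1+\gamma)x^2 e^{-x}$, so $x^2 n_0(x) \leq x^2 S_\gamma(x) \to 0$ and the decay condition~\eqref{e:x2n} holds. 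Thus Theorem~\ref{t:exist} provides the (unique, locally bounded) solution $n$, and the remark following Definition~\ref{d:subsol} shows that this $n$ is in particular a sub-solution in the required sense.

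Next I would check that $S_\gamma$ is an admissible super-solution to feed into Lemma~\ref{l:comparison}. Lemma~\ref{l:uss} gives the differential and flux-at-infinity inequalities of Definition~\ref{d:subsol}, so the only remaining point is boundedness: $\hat n_0$ is bounded, and $m$ is bounded as well, with $m(x) \to 1$ as $x \to 0^+$ and $m(x) \sim x^2 e^{-x} \to 0$ as $x \to \infty$; hence $S_\gamma \in L^\infty(Q_T)$ for every $T$. With both $n$ (sub-solution) and $S_\gamma$ (super-solution) in hand and $n_0 \leq S_\gamma$, Lemma~\ref{l:comparison} applied on $Q_T$ yields $n_t(x) \leq S_\gamma(x)$ for all $t \in [0,T]$ and all $x \geq 0$. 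Since $S_\gamma$ is stationary and this bound is uniform in $T$, letting $T \to \infty$ delivers the inequality for all $t \geq 0$, which is the assertion.

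I do not expect a genuine obstacle here: the entire content of the corollary is packaged into Lemmas~\ref{l:uss} and~\ref{l:comparison}, and the proof reduces to matching $n$ and $S_\gamma$ to the sub/super-solution framework of Definition~\ref{d:subsol}. The only steps that deserve a moment's attention are confirming that $S_\gamma$ really meets all the structural requirements of that definition (decay, the boundary behavior~\eqref{e:vanish0}, and membership in $L^\infty$) and the trivial passage from the finite-horizon bound on each $Q_T$ to a bound valid for all time.
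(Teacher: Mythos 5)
Your proposal is correct and follows exactly the paper's route: the paper states Corollary~\ref{c:Sbound} as an immediate consequence of Lemma~\ref{l:uss} (that $S_\gamma$ is a stationary super-solution) together with the weak comparison principle of Lemma~\ref{l:comparison}, applied with $n$ as sub-solution. Your additional verifications---that $n_0 \leq S_\gamma$ forces the decay condition~\eqref{e:x2n}, that $S_\gamma \in L^\infty$ and meets the structural requirements of Definition~\ref{d:subsol}, and the passage from $Q_T$ to all $t \geq 0$ using stationarity---are all sound and merely make explicit what the paper leaves implicit.
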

\begin{remark}
  Corollary~\ref{c:Sbound} applies to any initial data that can be bounded by $C(1+ x^2) e^{-x}$ for some $C \geq 0$, as in~\eqref{e:expDecay}.
\end{remark}

\begin{proof}[Proof of Lemma~\ref{l:uss}]
  Given the formula~\eqref{d:S}, one can directly differentiate and check that~$S_\gamma$ is a super-solution.
  A more illuminating proof is as follows:
  Observe
  \begin{equation*}
    J(x, S_\gamma) = J(x, \hat n_0) + J_\lin(x, \gamma m) + \gamma^2 m^2 + 2 \gamma m \hat n_0\,,
  \end{equation*}
  where $J_{\lin}$ (defined in equation~\eqref{e:Jlin}) is the linear terms in~$J$.
  Since $J(x, \hat n_0) = 0$, we will now look for functions~$m$ for which
  \begin{equation*}
    J(x, S_\gamma) = \gamma^2 m^2 \,.
  \end{equation*}
  For such functions we obtain the linear ODE
  \begin{equation*}
    J_\lin(x, m) + 2 m \hat n_0 = 0\,,
  \end{equation*}
  which simplifies to
  \begin{equation}\label{e:mEq}
    x^2 \partial_x m + (x^2 - 2x + 2\hat n_0)m = 0\,,
  \end{equation}
  Solving this ODE with the normalization~$m(0) = 1$ yields the formula for~$m$ in~\eqref{d:S}.

  Now to check~$S_\gamma$ is a stationary super-solution, we need to verify
  \begin{equation*}
    \partial_x J(x, S_\gamma) \leq 0
    \qquad\text{and}\qquad
    \lim_{x \to \infty} J(x, S_\gamma) \geq 0\,.
  \end{equation*}
  The second condition is true because $J(x, S_\gamma) = \gamma^2 m^2 > 0$.
  To check the first condition we note from~\eqref{d:S} 
  that~$m>0$, and from~\eqref{e:mEq} it follows
  \begin{equation*}
    \frac{\partial_x m}{m}
      = \frac{2x - x^2 - 2\hat n_0}{x^2}
      = \frac{g(x)}{x(e^x - 1)}\,,
    \quad\text{where}\quad
      g(x) \defeq (2 -x)(e^x - 1) - 2x \,.
  \end{equation*}
  Notice $g(0) = 0$, $g'(0) = 0$, and $g''(x) = - x e^x \leq 0$, which forces $g(x) \leq 0$ for all $x \geq 0$.
  Consequently, $\partial_x m \leq 0$ and hence $\partial_x J(x, S_\gamma) = 2 \gamma^2 m \partial_x m \leq 0$, concluding the proof.
\end{proof}

In case the initial data does not decay exponentially, we can still obtain explicit time-independent 
pointwise bounds.  The super-solutions, however, are not as natural as the $S_\gamma$ defined in~\eqref{d:S}.
\begin{proposition}\label{p:nBdd}
  For the solution~$n$ given by Theorem~\ref{t:exist}, $(1+x^2)n(x,t)$ is uniformly bounded on 
  $Q = (0, \infty)\times(0, \infty)$.
  Moreover, the uniform decay properties \eqref{e:x2nPtwise} and \eqref{e:x2dxnVanish} stated in Theorem~\ref{t:exist} hold with $T=\infty$.
\end{proposition}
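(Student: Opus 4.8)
The plan is to reduce the entire statement to a single comparison argument. The decay properties \eqref{e:x2nPtwise} and \eqref{e:x2dxnVanish} were obtained in~\cite{EscobedoHerreroEA98} on each finite time interval with constants governed only by $\sup_{[0,T]}\norm{n_\tau}_{L^\infty}$, so it suffices to produce a \emph{time-independent} pointwise bound $n_t(x)\le S(x)$ in which $S$ is bounded and satisfies $x^2 S(x)\le C$. Indeed, for such an $S$ the quantity $(1+x^2)n$ is uniformly bounded on $Q$, the $L^\infty$ bound ceases to depend on $T$, and the cited estimates then upgrade automatically to $T=\infty$. Thus the whole proposition rests on constructing a stationary super-solution $S$ of~\eqref{e:komp}--\eqref{e:noflux} that dominates $n_0$ and has $(1+x^2)S$ bounded, after which Lemma~\ref{l:comparison} (with the solution $n$ as sub-solution) gives $n_t\le S$ for all $t$.

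First I would determine the shape $S$ must have. A stationary super-solution requires, by Definition~\ref{d:subsol}, that $J(x,S)$ be nonnegative and nonincreasing. Since $n_0$ is bounded and satisfies~\eqref{e:x2n}, the delicate regime is $x\to\infty$, where I want $x^2 S$ to remain bounded. The linearized flux $J_\lin$ from~\eqref{e:Jlin} identifies the correct tail: the constant-flux equation $J_\lin(x,S)=c_0$ has a decaying solution behaving like $c_0/x^2$ as $x\to\infty$, which is exactly the borderline decay compatible with $x^2 S$ bounded. The difficulty is that a naive tail $S\sim c_0/x^2$ is \emph{not} admissible, since a direct computation gives $J(x,c_0/x^2)=c_0-4c_0/x+O(x^{-4})$; the advection term $(x^2-2x)S$ forces $J$ to \emph{increase} toward its limit, violating $\partial_x J\le 0$. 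I would therefore add a positive higher-order correction (of size $\sim b/x^3$ with $b$ large enough, equivalently letting $x^2 S$ approach $c_0$ from above) so that the $1/x$-coefficient of $J$ becomes nonpositive and $J$ is genuinely nonincreasing for large $x$.

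Next I would patch this tail to a bounded profile near the origin. On a neighbourhood of $0$ a constant equal to $\norm{n_0}_{L^\infty}$ is already a super-solution for $x\le 1$, since there $\partial_x J=(2x-2)\norm{n_0}_{L^\infty}\le 0$; I would continue it by an admissible bounded profile out to some $x_*$ chosen large enough, using $x^2 n_0(x)\to 0$, that the corrected algebraic tail dominates $n_0$ beyond $x_*$. The two pieces would be joined at $x_*$ so as to form an \emph{inverted-V corner}~\eqref{e:cornerSupSol}, which the comparison framework permits. With $S$ in hand I would verify the remaining hypotheses of Definition~\ref{d:subsol}: the flux condition at infinity holds because $J(x,S)\to c_0>0$, and the regularity at $x=0$ demanded by~\eqref{e:vanish0} holds since $S$ is smooth and bounded there with bounded derivative. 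Lemma~\ref{l:comparison} then yields $n_t(x)\le S(x)$ for all $t\ge 0$ and $x\ge 0$, hence the uniform bound on $(1+x^2)n$.

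The main obstacle is the super-solution construction itself, specifically reconciling boundedness near $x=0$ with a $c_0/x^2$ tail while keeping $J(x,S)$ globally nonincreasing. The advection term obstructs pure power-law tails, and a globally exact constant-flux profile would require solving a Riccati equation with no elementary closed form; so the construction must instead combine an explicit corrected tail, an explicit bounded near-origin profile, and a careful corner match, verifying $J\ge 0$ and $\partial_x J\le 0$ piecewise. Once this explicit $S$ is produced, the remainder---the comparison, the uniform bound, and the upgrade of~\eqref{e:x2nPtwise}--\eqref{e:x2dxnVanish} to $T=\infty$ via the estimates of~\cite{EscobedoHerreroEA98}---is routine.
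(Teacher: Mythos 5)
Your proposal is essentially the paper's own argument: a stationary, piecewise super-solution with bounded $(1+x^2)S$, an inverted-V corner match per~\eqref{e:cornerSupSol}, comparison via Lemma~\ref{l:comparison}, and the upgrade of~\eqref{e:x2nPtwise}--\eqref{e:x2dxnVanish} to $T=\infty$ by rerunning the arguments of~\cite{EscobedoHerreroEA98} with the now time-uniform bound. Your key diagnosis is also exactly right and matches the paper quantitatively: the pure tail $c_0/x^2$ fails since $J(x,c_0/x^2)=c_0-4c_0/x+c_0^2/x^4$ has increasing flux, and the paper's tail $c_1/\bigl(x(x-5)\bigr)=c_1/x^2+5c_1/x^3+\cdots$ is precisely your corrected tail with $b=5c_0>4c_0$. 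The one step you leave open---the ``admissible bounded profile'' on the intermediate region, where your constant piece only works for $x\le 1$ and the algebraic tail only for large $x$---is where the paper's single explicit choice does the work: it takes $Z(x)=e^{20-x}+c_0$ on all of $(0,15)$, for which $\partial_x J(x,Z)=-2(c_0+e^{20-x})(e^{20-x}+1-x)<0$ there because $e^{20-x}>e^5>x-1$ for $x<15$; a large decreasing exponential thus covers both the near-origin and middle regions in one piece, and the corner at $x=15$ is arranged by taking $c_1>900e^5$ and then enlarging $c_1$ so that $Z$ is continuous and dominates $n_0$. With that profile substituted for your unspecified middle piece, your outline becomes the paper's proof.
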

\begin{remark}
  Theorem~2 in~\cite{EscobedoHerreroEA98} already asserts that the solution is uniformly bounded in time.
  However, we were unable to verify the proof given. It asserts that a function of the 
  form $\beta(1\varmin x^{-2})$ is a super-solution to equation (3.2) of \cite{EscobedoHerreroEA98}, 
  which is a truncated form of equation \eqref{e:komp} above.  
  This does not work if $\beta$ is a fixed constant independent of both time 
  and truncation.
Instead, the function $\beta e^{4t}(1\varmin x^{-2})$ (corresponding to (2.3) in \cite{EscobedoHerreroEA98}) 
works as a uniform super-solution and provides local in time bounds as we stated in Theorem~\ref{t:exist}.
\end{remark}
\begin{proof}[Proof of Proposition~\ref{p:nBdd}]
  Define
  \begin{equation*}
    Z(x) =
      \begin{dcases}
	e^{20 - x} + c_0 & x < 15\,,\\
	\frac{c_1}{x (x - 5)} & x \geq 15\,,
      \end{dcases}
  \end{equation*}
  for constants $c_0$, $c_1$ that will be chosen as follows:
  By~\eqref{e:x2n} we can always choose $c_1 > 0$ so that $n_0 \leq Z$ for all $x > 15$.
  A direct calculation shows that if
  \begin{equation*}
    c_1 > 900 e^5
  \end{equation*}
  then the corner condition $\partial_x^- Z(15) > \partial_x^+ Z(15)$ is satisfied.
  Making $c_1$ larger if necessary, we can ensure that there exists $c_0 \geq 0$ such that $Z$ is continuous at $x = 15$ and $n_0 \leq Z$ for all $x > 0$.

  We claim $Z$ is a supersolution to~\eqref{e:komp}--\eqref{e:noflux}.
  Clearly
  \begin{equation*}
    J(x, Z) = \frac{ c_1 \paren{x^{4} - 9 x^{3} + 15 x^{2}+c_1}} {x^{2} \paren{x - 5}^{2}} \xrightarrow{x \to \infty} c_1\,.
  \end{equation*}
  Thus we only need to verify $\partial_x J(x, Z) \leq 0$.
  For $x > 15$ we note
  \begin{equation*}
    \partial_x J(x, Z)
      = 2 Z \partial_x Z + \partial_x J_{\lin}(x, Z)
      \leq  \frac{c_1 (15 - x)}{(x - 5)^3} < 0\,,
  \end{equation*}
  where $J_{\lin}$ is defined in~\eqref{e:Jlin}.
  For $x < 15$ we compute
  \begin{equation*}
    \partial_x J(x, Z) = -2 (c_0 + e^{20-x})(e^{20-x} + 1 - x) < 0\,,
  \end{equation*}
  provided $e^{20 - x} + 1 - x > 0$.
But this condition holds, since $x<2^4<e^5<e^{20-x}$.
  Thus $Z$ is a stationary super-solution of~\eqref{e:komp}--\eqref{e:noflux}.
  By Lemma~\ref{l:comparison} this implies $n \leq Z$ for all $t \geq 0$, concluding the proof
  that $(1+x^2)n(x,t)$ is uniformly bounded on $Q$.

Now the uniform decay properties \eqref{e:x2nPtwise} and \eqref{e:x2dxnVanish} 
for $T=\infty$ follow exactly as in \cite[pp.~3849-50]{EscobedoHerreroEA98}, 
based on a finer comparison argument for $x>R$ large and classical regularity estimates.
\end{proof}

\subsection{Energy Estimates.}\label{s:energy}
We conclude this section by establishing $L^2$ energy estimates on solutions.
While such energy estimates usually play a central role in the study of parabolic problems, they are not as helpful in the present context.
Indeed, the proofs of our main results do not use~$L^2$ energy estimates, and they are only presented here for completeness.
\begin{proposition}\label{p:energy}
Let $n$ be a solution to~\eqref{e:komp}--\eqref{e:noflux} with nonnegative initial data~$n_0$ that satisfies~\eqref{e:x2n}.
Then, for any $t>s>0$, we have
\begin{align}\label{en}
  \nonumber
  \MoveEqLeft
\int_0^\infty n^2_t(x)\,dx +\int_s^t\int_0^\infty [n_\tau^2 +x ^2(\Dx n_\tau)^2]\,dx\, d\tau
\\
&\leq \int_0^\infty n^2_s(x)\,dx
  + 2 \int_s^t\int_0^\infty x n_\tau^2 \, dx \, d\tau\,.
 \end{align}
\end{proposition}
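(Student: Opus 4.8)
The plan is to run the standard energy method --- multiply \eqref{e:komp} by $2n$ and integrate --- but to organize it so as to accommodate the degeneracy at $x=0$ (where $\partial_x n$ may be singular) and the unbounded domain. Rather than integrate directly over $(0,\infty)$, I would work on a finite interval $(r,R)$ with $0<r<R<\infty$, retain every boundary term, and only then send $R\to\infty$ and $r\to 0$. Since $\partial_\tau(n^2)=2n\,\partial_x J$, integrating in $x$ over $(r,R)$, then in $\tau$ over $(s,t)$, and integrating by parts gives
\begin{equation*}
  \int_r^R (n_t^2 - n_s^2)\,dx
  = 2\int_s^t \bigl[\,nJ\,\bigr]_r^R \, d\tau
  - 2\int_s^t\!\int_r^R (\partial_x n)\,J \, dx\, d\tau \,.
\end{equation*}
I would then expand $(\partial_x n)J = x^2(\partial_x n)^2 + (x^2-2x)n\,\partial_x n + n^2\partial_x n$ and integrate the last two terms by parts in $x$, using $(x^2-2x)n\partial_x n=\tfrac12(x^2-2x)\partial_x(n^2)$ and $n^2\partial_x n=\tfrac13\partial_x(n^3)$. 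This produces the bulk terms $\int x^2(\partial_x n)^2$ and $-\int(x-1)n^2$, together with boundary contributions at $r$ and $R$ involving $(x^2-2x)n^2$ and $n^3$.

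Taking $R\to\infty$ is the routine part. By Proposition~\ref{p:nBdd} the decay conditions \eqref{e:x2nPtwise}--\eqref{e:x2dxnVanish} hold with $T=\infty$, and together with \eqref{jr} and the uniform boundedness of $n$ they force every boundary term at $R$ to vanish and the integrals $\int_0^\infty n^2$ and $\int_0^\infty x\,n^2$ to converge. The nonnegative term $\int_r^R x^2(\partial_x n)^2$ increases monotonically in $R$, so its limit exists in $[0,\infty]$, and is seen to be finite once the whole identity is assembled.

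The delicate part --- and the main obstacle --- is the limit $r\to 0$, which is precisely why the hypothesis $s>0$ is needed: here I have no pointwise control on $\partial_x n(r)$. For the remaining boundary term $\int_s^t n(r)J(r)\,d\tau$ I would split $n(r)J(r)=n(r)\bigl[J(r)-n(r)^2\bigr]+n(r)^3$. Since $J(r)-n(r)^2=J_\lin(r)=r^2\partial_x n+(r^2-2r)n$ by \eqref{e:Jlin}, the uniform bound on $n$ and the time-integrated estimate \eqref{e:a0} (which requires $s>0$) give $\int_s^t n(r)J_\lin(r)\,d\tau\to 0$, while Lemma~\ref{l:n0exist} and dominated convergence give $\int_s^t n(r)^3\,d\tau\to\int_s^t n_\tau(0)^3\,d\tau$; hence $\int_s^t n(r)J(r)\,d\tau\to\int_s^t n_\tau(0)^3\,d\tau$. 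The leftover boundary pieces $(r^2-2r)n(r)^2$ and $n(r)^3$ from the integrations by parts are controlled the same way, using $n(r)\to n_\tau(0)$ and boundedness.

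Assembling these limits yields the clean identity
\begin{equation*}
  \int_0^\infty n_t^2\,dx
  + 2\int_s^t\!\!\int_0^\infty \!\bigl(x^2(\partial_x n)^2 + n^2\bigr)\,dx\,d\tau
  + \frac43\int_s^t n_\tau(0)^3\,d\tau
  = \int_0^\infty n_s^2\,dx
  + 2\int_s^t\!\!\int_0^\infty \! x\,n^2\,dx\,d\tau \,.
\end{equation*}
Because $n_\tau(0)^3\ge 0$ and the dissipation coefficient $2$ dominates $1$, discarding the cubic term and weakening the coefficient immediately gives the claimed inequality~\eqref{en}. (In fact the argument proves the sharper statement above.)
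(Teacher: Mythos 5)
Your proof is correct, and its skeleton is the same as the paper's: multiply \eqref{e:komp} by $2n$, integrate over a truncated interval, perform the same two integrations by parts, and send the outer boundary to infinity using Proposition~\ref{p:nBdd}, \eqref{jr} and boundedness. Where you genuinely diverge is at the inner boundary $r\to0$. The paper's proof (see the term $\Gamma(t)$ after \eqref{e:ee1}) works pointwise in time with purely one-sided control: it discards the sign-definite boundary pieces---in particular the cubic term $-\tfrac43 n^3(\epsilon,t)\le 0$---and invokes the Oleinik inequality (Lemma~\ref{l:oleinik}) directly to bound $-2\epsilon^2 n\,\partial_x n$ from above, so it only establishes $\limsup_{\epsilon\to0}\Gamma\le 0$ and hence the inequality \eqref{en}. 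You instead route the boundary term through the time-integrated two-sided estimates of Lemma~\ref{l:intJ0}---specifically the first limit in \eqref{e:a0}, which is itself a consequence of the Oleinik bound and is exactly why both arguments need $s>0$---together with Lemma~\ref{l:n0exist} and dominated convergence, to compute the limit of $\int_s^t n_\tau(r)J(r,n_\tau)\,d\tau$ exactly. The payoff is the sharper identity
\begin{equation*}
  \int_0^\infty n_t^2\,dx
  + 2\int_s^t\!\!\int_0^\infty \paren[\big]{x^2(\partial_x n_\tau)^2 + n_\tau^2}\,dx\,d\tau
  + \frac43\int_s^t n_\tau(0)^3\,d\tau
  = \int_0^\infty n_s^2\,dx
  + 2\int_s^t\!\!\int_0^\infty x\,n_\tau^2\,dx\,d\tau\,,
\end{equation*}
which I verified is consistent (it reduces to an equality for the equilibria $\hat n_\mu$, since $J(x,\hat n_\mu)=0$ and $\hat n_\mu(0)=0$), and which retains both the full dissipation coefficient $2$ and a nonnegative $L^2$-outflux term at the condensation boundary that the paper's one-sided argument throws away; \eqref{en} then follows by discarding that term and weakening $2$ to $1$, exactly as you say. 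In short: same energy method, but your exact computation of the zero-energy boundary contribution upgrades the paper's inequality to an identity at the modest cost of leaning on Lemma~\ref{l:intJ0} rather than on Lemma~\ref{l:oleinik} alone.
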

\begin{remark}
  By Proposition~\ref{p:nBdd}, the term $\int_s^t \int_0^\infty x n_\tau^2 \, dx \, d\tau$ appearing on the right can be bounded by $C(t - s)$ for some constant $C = C(n_0)$.
\end{remark}
\begin{proof}[Proof of Proposition~\ref{p:energy}]
  Let $0 < \epsilon < R < \infty$.
  Multiplying~\eqref{e:komp} by $2n$ and integrating from~$\epsilon$ to $R$ yields
  \begin{align}
    \nonumber
    \partial_t \int_\epsilon^R n^2 \,dx
      & =
    - 2\int_\epsilon^R (\Dx n) J\, dx +\brak[\Big]{ 2Jn}_\epsilon^R
    \\
    \label{e:ee1}
    & = -2\int_\eps^R [n^2+x ^2(\Dx n)^2]dx +2\int_\eps^R x n^2dx +\Gamma(t)\,,
  \end{align}
  where
  \begin{align*}
    \Gamma(t) \defeq -\brak[\Big]{ (x^2-2x)n^2}^R_{\epsilon}
      -\frac{2}{3}\brak[\Big]{ n^3}^R_{\epsilon} + 2\brak[\Big]{ Jn }^R_{\epsilon} \,.
  \end{align*}
  When~$\epsilon < 1$ and~$R > 2$ we observe
\begin{align*}
  \Gamma(t)
    & =- n^2(R, t)R(R-2)+\eps(\eps -2) n^2(\eps, t) -\frac{2}{3}n^3(R, t)+\frac{2}{3}n^3(\eps, t)+2\brak[\Big]{ Jn }^R_{\epsilon}
  \\
  & \leq 2J(R, t)n(R, t) +\frac{2}{3}n^3(\eps, t) -2J(\epsilon, t)n(\epsilon, t)\\
  & = 2J(R, t)n(R, t)-\frac{4}{3}n^3(\epsilon, t) -2\epsilon^2 n(\epsilon, t)\partial_x n(\epsilon, t)+2\epsilon (2-\epsilon)n^2(\epsilon, t)\,.
  \end{align*}
  Since~$n$ is bounded and the flux vanishes at infinity (equation~\eqref{e:noflux}) the first term on the right vanishes as $R \to \infty$.
  The second term is bounded above by~$0$.
  Using Lemma~\ref{l:oleinik}, third term vanishes as~$\epsilon \to 0$.
  The last term vanishes as~$\epsilon \to 0$, and so
  \begin{equation*}
    \lim_{\epsilon \to 0,~ R\to \infty} \Gamma(t) = 0\,.
  \end{equation*}
  Thus sending~$\epsilon \to 0$ and $R \to \infty$ in~\eqref{e:ee1} yields~\eqref{en} as claimed.
\end{proof}

\section{Finite Time Condensation.}\label{s:bec}
In this section we prove persistence (Proposition~\ref{p:persistence}) and establish the onset of photon loss
through a singularity in the slope (the second assertion in Proposition~\ref{p:formation}).
Throughout this section we assume $n_0$ is a nonnegative bounded function satisfying~\eqref{e:x2n}, and~$n$ is the unique global solution to~\eqref{e:komp}--\eqref{e:noflux} with initial data~$n_0$.

\subsection{Persistence.}

We now prove Proposition~\ref{p:persistence} and show that photon loss begins, it will never stop.
\begin{proof}[Proof of Proposition~\ref{p:persistence}]
  Suppose for some $T > 0 $ we have $n_T(0) > 0$.
  We claim that $n_t(0) > 0$ for all $t > T$.
  Once this claim is established, Proposition~\ref{p:persistence} follows immediately by setting~$t_* = \inf\set{t > 0 \st n_t(0) > 0}$.

  To prove the claim recall by Lemma~\ref{l:oleinik} we know $\partial_x n_t \geq -\varphi_t$, where~$\varphi_t$ is defined by~\eqref{e:varphiDef}.
  Integrating in~$x$ this implies
  \begin{align*}
    n_T(x) \geq \paren[\Big]{ n_T(0) - \int_0^x \varphi_T(y) \, dy }_+\,.
  \end{align*}
  Here notation $z_+$ denotes $\max\set{z, 0}$, the positive part of $z$.
  Since the function $\int_0^x \varphi_T(y) \, dy$ is convex in~$x$, we must have
  \begin{equation*}
    n_T(x) \geq (a_T -b_T x)_+ \,, 
  \end{equation*}
  where
  \begin{equation}\label{e:abID}
    a_T = n_T(0) > 0\,,
    \qquad
    b_T = \frac{a_T}{R}\,,
  \end{equation}
  and $R > 0$ is uniquely determined from
  \begin{equation*}
    a_T - \int_0^R \varphi_T(y) \, dy = 0\,.
  \end{equation*}

  Now for $t > T$, we define $a_t$ and $b_t$ to solve the ODE
  \begin{equation}\label{e:ab}
    \partial_t a=-2a(1+b)\,, \qquad \partial_t b=a-2b(1+b)\,,
  \end{equation}
  with initial data~\eqref{e:abID}.
  Let
  \begin{equation*}
    q_t(x) = (a_t - b_t x)_+\,,
  \end{equation*}
  for $t \geq T$.
  We claim that $q$ is a sub-solution to~\eqref{e:komp}--\eqref{e:noflux} (as in Definition~\ref{d:subsol}) for all $t \geq T$.
  To see this, note first that $\partial_t (b / a) = 1$ and hence
  \begin{equation}\label{e:b}
    b_t = \frac{a_t b_T}{a_T} + a_t(t - T)\,.
  \end{equation}
  Using the first equation in~\eqref{e:ab} it now follows that both
  \begin{equation*}
    a_t > 0\qquad\text{and}\qquad
    b_t > 0\,,
  \end{equation*}
  for all $t \geq T$.
  Moreover, for~$\hat x_t \defeq a_t / b_t$, equation~\eqref{e:b} implies
  \begin{equation*}
    \hat x_t = \frac{R}{1 + R(t - T)} \in (0, R)\,,
  \end{equation*}
  for all $t > T$.

  Now for $t > T$ and $x \in (0, \hat x_t)$ we compute
  \begin{align*}
    \partial_t q - \partial_x J(x, q)
      =&   \partial_t a- \partial_t b x -\partial_x [-x^2 b +(x^2 -2x)(a-bx)+(a-bx)^2]\\
      =&   \partial_t a +2a(1+b) -x(  \partial_t b +2b+2b^2-a) +3x(bx-a)\\
      =& 3x(bx- a) \leq 0. 
  \end{align*}
  For $ x > \hat x_t$, $q_t = 0$ and so $\partial_t q_t = \partial_x J(x, q_t) = 0$.
  Moreover, since $b_t > 0$ we note that the appropriate corner condition holds:
  \begin{equation*}
    \partial_x^- q_t(\hat x_t) = -b_t < 0 = \partial_x^+ q_t(\hat x_t)\,.
  \end{equation*}
  Thus~$q_t$ is a sub-solution to~\eqref{e:komp}--\eqref{e:noflux} for all $t \geq T$.
  By the comparison principle (Lemma~\ref{l:comparison}) this implies $n_t(x) \geq q_t(x)$ for all $t \geq T$ and $x \geq 0$.
  This implies $n_t(0) \geq q_t(0) = a_t > 0$ for all $t \geq T$, finishing the proof.
\end{proof}

\subsection{Onset of Photon Loss (Slope Condition).}
We now prove the second assertion in Proposition~\ref{p:formation}, which states that if $\partial_x n_0(0) > 1$, 
then photon loss must commence at or before the time $\bar t_*$ given by~\eqref{e:tStarRiccati}.
Before delving into the details of the rigorous proof, we present a quick heuristic derivation.
Let $w_t = \partial_x n_t(0)$ and differentiate equation~\eqref{e:komp} in~$x$.
Using the fact that $n_t(0) = 0$ for $t < t_*$ we formally obtain
\begin{equation}\label{e:riccati}
  \partial_t w = 2w - 4w + 2 w^2 = 2w(w-1)\,.
\end{equation}
This is a Riccati equation which can readily be integrated.
If $w_0 = \partial_x n_0(0) > 1$, then~\eqref{e:riccati} develops a singularity at time~$t_*$ given by~\eqref{e:tStarRiccati}.
To convert the above heuristic into a rigorous proof, we need to construct suitable sub and super-solutions.
\begin{proof}[Proof of the second assertion in Proposition~\ref{p:formation}]
  We will first prove~\eqref{e:tStarRiccati} holds by constructing sub-solutions using modified sideways parabolas.
  More precisely, the sub-solutions we construct will be of the form 
  \begin{equation}\label{d:nsubdef}
    z_t(x) = (u_t(x)-c_t x^2)_+ \,,
  \end{equation}
  where
  \begin{equation}\label{e:u}
    u_t(x) = \frac{\sqrt{a_t^2 + 2 b_t x} - a_t }{b_t}\,,
  \end{equation}
  and the functions $a, b, c$, will be chosen shortly, with $b > 0, c > 0$. 
  Note that $u$ is determined implicitly from the upper branch of parabolic arcs
  \begin{equation}\label{d:para1}
    x = a_t u_t(x) + \frac12{b_t }u_t(x)^2 \,.
  \end{equation}
  From this we compute
  \begin{subequations}
  \begin{gather}
    \label{e:dxU}
    1 = (a+bu)\Dx u ,
    \\
    \label{e:dx2U}
    0 = (a+bu)\Dxx u + b (\Dx u)^2,
    \\
    \label{e:dtU}
    0 = u \partial_t a  + \frac12u^2{\partial_t b}  + (a+bu)\Dt u \,.
  \end{gather}
  \end{subequations}
  When $a_t > 0$, we have $u_t(0)=0$, and hence $\partial_x z_t(0) = \Dx u_t(0)=1/a_t$.
  Our aim is to choose~$a$ so that $\partial_x z_t(0) = 1/a_t$ satisfies the Riccati equation~\eqref{e:riccati}
  until blowup.
  This boils down to letting~$a$ solve
  \begin{equation}\label{e:invslope1}
    \partial_t a_t
      = 2a_t - 2\,.
  \end{equation} 

  Notice $u\geq0$, $a+bu\ge0$, and $(a+bu)u\geq x$, hence $x\Dx u\leq u$.
  Define the (non-linear) differential operator $\mathcal L$ by
  \begin{equation*}
    \mathcal L n
      \defeq
      \partial_t n - \partial_x J(x, n)
      = \Dt n- x^2\Dxx n-\Dx n(2n+x^2)+2n(1-x) \,,
  \end{equation*}
  and compute, when $z>0$,
  \[
  \mathcal L z = \partial_t u - x^2\partial_t c- x^2(\partial_x^2 u - 2c) +(x^2+2z)(-\partial_x u+2cx)+2z(1-x),
  \]
  whence, since $cx^2=u-z$,
  \begin{align*}
    (a+bu)\mathcal L z  &= 
    (a+bu)\left( -x^2\partial_t c  + 2u +  (x^2+2z)2cx - 2zx \right)
    \\ & \qquad 
    -u\Dt a -\frac12u^2\Dt b + x^2 b(\Dx u)^2 - x^2 - 2z
    \\ &\leq (a+bu)\left(-x^2\Dt c + 2cx^3 +2zx(2c-1) \right)
    \\ &\qquad 
    + u(-\Dt a+2a-2)+ u^2\left(-\frac12 \partial_t b+ 3b\right) + x^2(-1+2c )\,.
  \end{align*}
  If
  \begin{equation}\label{e:bevol1}
    \partial_t c = c \,,
    \qquad
    0 < x < \frac{1}{2}\,,
    \qquad
    0<c < \frac{1}{2}\,,
    \qquad
    \partial_t b = 6b \,,
  \end{equation}
  then we have $\mathcal L z<0$.
  Thus, from the above we choose
  $$
    a_t = 1-(1 -a_0)e^{2t} \,,
    \qquad
    b_t = b_0 e^{6t} \,,
    \qquad\text{and}\qquad
    c_t = c_0 e^{t}\,,
  $$
  with $a_0 \in (0, 1)$, $b_0, c_0 > 0$ to be determined shortly.

  In order to ensure $z_0 \leq n_0$, pick $\epsilon > 0$ and let
  \begin{equation*}
    a_0 = \frac{1}{\partial_x n_0(0) - \epsilon} \in (0, 1)\,,
    \qquad
    t^*_\epsilon = \frac{1}{2}\abs{\ln (1-a_0)} \,.
  \end{equation*}
  Due to this choice of $t_\epsilon^*$ we have $a_{t} > 0$ for $t < t_\epsilon^*$, $a_{t_\epsilon^*} = 0$, and $a_t < 0$ for $t > t_\epsilon^*$.
  (Notice $u_t(0)>0$ for $t> t_\epsilon^*$.)
  Next choose $c_0<\frac{1}{2}(1-a_0)$ 
  so that 
  $$
    c_t<1/2 \quad\text{for } t\in (0, 2t_\epsilon^*) \,.
  $$
  This choice is made to ensure $z_t(x)$ remains a sub-solution up to time $t= 2t_\epsilon^*$.
  Next, we choose $b_0$ large enough to ensure that for some $\bar x \ll 1/2$ we have $z_0(x) = 0$ for all $x \in  (\bar x, \infty)$.
  Since $\partial_x z_0(0) = \partial_x n_0(0) - \epsilon < \partial_x n_0(0)$, by making $b_0$ larger if necessary we can also arrange $z_0 \leq n_0$.

  Now all the requirements in~\eqref{e:bevol1} are satisfied, and hence $z$ is a sub-solution up to time $2t_\epsilon^*$.
  Since $z_0 \leq n_0$, the comparison principle (Lemma~\ref{l:comparison}) implies $z_t \leq n_t$ for all $t \leq 2t_\epsilon^*$. 
  Using~\eqref{e:u} this implies
  \begin{equation*}
    n_t(0) \geq z_t(0) = u_t(0) \geq \frac{-2a_t}{b_t} > 0
    \qquad\text{for all } t \in (t_\epsilon^*, 2t_\epsilon^*]\,.
  \end{equation*}
  Sending~$\epsilon \to 0$ we see that
  \begin{equation*}
    t_* \leq \lim_{\epsilon \to 0} t_\epsilon^*
      = \frac{1}{2} \ln\paren[\Big]{ \frac{\partial_x n_0(0)}{\partial_x n_0(0) -1} } = \bar t_*\,,
  \end{equation*}
  which proves $t_* \leq \bar t_*$ as claimed.
  \medskip

  It remains to produce initial data for which $n_t(0)$ is continuous at $t = t_*$, and $t_* = \bar t_*$, where $\bar t_*$ is defined in~\eqref{e:tStarRiccati}.
  We will do this by constructing a super-solution~$Z$ such that $Z_t(0) = 0$ for~$0\le t \leq \bar t_*$ and $Z_t(0)>0$ for $t_*<t<\hat t$,
  with $Z_t(0)$ a continuous function of time. Moreover we can make $\Dx Z_0(0)>1$ be arbitrary.

  Once we construct $Z$, we choose $n_0$ to be any function for which $0\leq n_0 \leq Z_0$ and $\partial_x n_0(0) = \partial_x Z_0(0)$.
  For the corresponding solution, $n$, we must have
  \begin{equation*}
    0 \leq n_t(0) \leq Z_t(0) = 0 \qquad \text{for all } t \in [0, \bar t_*]\,.
  \end{equation*}
  This forces $t_* \geq \bar t_*$.
  Since we have already proved $t_* \leq \bar t_*$, this implies $t_* = \bar t_*$ as desired.
  Continuity of $n_t(0)$ at $t = t_*$ follows because $0 \leq n_t(0) \leq Z_t(0)$ and $Z_t(0)$ is continuous
  with $Z_{\bar t_*}(0) = 0$.

  It remains to construct the super-solution~$Z$.
  We will do this by choosing
  \begin{equation*}
    Z_t(x) = \begin{cases}  u_t(x)\varmin S_\gamma(x) \,, & 0<x\leq \bar x_0,
    \\ S_\gamma(x)\,, & x>\bar x_0,
    \end{cases}
  \end{equation*}
  where $S_\gamma$ is the stationary super-solution in~\eqref{d:S}, $\gamma > 0$ and $\bar x_0>0$ will be chosen shortly, 
  and~$u$ is given explicitly by~\eqref{e:u} (or implicitly from the upper branch of the parabolic arc~\eqref{d:para1}).

  As before we let~$a$ satisfy~\eqref{e:invslope1}, with $a_0\in(0,1)$ specified arbitrarily.
  One easily checks that
  \[
  |a_t|\le a_0 \quad\mbox{ for } \quad 0\le t\le \hat t:=\frac12\log \frac{1+a_0}{1-a_0} = \bar t_*+\frac12\log(1+a_0)\,.
  \]
  In this case it suffices to chose $b > 0$ to be constant in time.
  Using~\eqref{e:u}, \eqref{e:dxU}--\eqref{e:dtU}, and~\eqref{e:invslope1} we compute  
  \begin{align*}
    (a + bu) \mathcal L u
      &= 2u(1-a) + \frac{x^2 b}{(a + bu)^2} - (2u + x^2) + 2(1-x) u (a + bu)
    \\
      &=  x^2 \paren[\Big]{\frac{b}{a^2 + 2bx} - 1}  -2ax u +2 (1-x) b u^2\\
      & =  x^2 \paren[\Big]{\frac{b}{a^2 + 2bx} - 3 } + (2-x) b u^2,
  \end{align*}
  where we used~\eqref{d:para1} to obtain the last equality.
  Hence
  \begin{equation*}
    (a + bu) \mathcal L u
      \geq x^2 \paren[\Big]{\frac{b}{a_0^2+2bx} - 3} +  (2-x) b u^2.
  \end{equation*}
  Choosing~$\bar x_0 <  1/6$ and $b$ large we see that $\mathcal L u \geq 0$ for all $x \in (0, \bar x_0)$.

  Since $\partial_x u_0(0) > 1 = \partial_x \hat n_0(0)$, we can make $\bar x_0$ smaller if necessary to ensure $u_0(\bar x_0) > \hat n_0(\bar x_0)$.
  Then we can find a sufficiently small~$\gamma > 0$ for which $u_0(\bar x_0) > S_\gamma(\bar x_0)$.
  From~\eqref{e:dxU} and~\eqref{e:dtU} we see that the function~$u$ is increasing in both~$x$ and~$t$.
  Hence the functions $u_t(\cdot)$ and $S_\gamma(\cdot)$ must meet at some~$\bar x_t < \bar x_0$ where the function~$Z$ will satisfy the corner condition~\eqref{e:cornerSupSol}.
  This shows~$Z$ is a super-solution, concluding the proof.
\end{proof}

\section{Long Time Behavior.}\label{s:largetime}
This section is devoted to studying the long time convergence of solutions (Theorem~\ref{t:lim}, and the other results stated in Section~\ref{s:longtime}).
Following the convention from the previous section, we assume~$n$ is the unique global solution to~\eqref{e:komp}--\eqref{e:noflux} with initial data~$n_0$.
Recall Theorem~\ref{t:exist} guarantees that $n \in C^\infty( (0, \infty)^2 )$.

\subsection{Entropy Decay and Steady States (Lemma~\ref{ent}).}
The main goal of this section is to prove the entropy decay stated in Lemma~\ref{ent}.
We begin with a formal argument showing that the quantum entropy~$H$ defined in~\eqref{e:HdefIntro} is dissipated (see also~\cites{CaflischLevermore86,LevermoreLiuEA16}).
Note that the flux~$J$ can be rewritten as
\begin{align}\label{e:Jh}
  J&=n(n+x^2) \partial_x h(x, n)\,,
\end{align}
where
\begin{equation*}
  h(x, n)
    \defeq  x+ \ln n -\ln (n+x^2)
    =  x - \ln\paren[\Big]{ 1 + \frac{x^2}{n} } \,.
\end{equation*}
Multiplying~\eqref{e:komp} by $h(x, n)$ and integrating by parts formally gives
$$
\int_0^\infty \partial_t n \, h(x, n) \, dx=-\int_0^\infty n(n+x^2)|\partial_x h(x, n)|^2 \, dx \,.
$$
Here we assumed that the boundary term~$Jh$ vanishes both at zero and infinity.
Since the left hand side can be recast as a time derivative, this yields the dissipation relation~\eqref{e:ent}.
For convenience we rewrite~\eqref{e:ent} as
\begin{equation}\label{e:dtH}
  \partial_t H + D=0, 
\end{equation}
where the quantum entropy functional, $H = H(n)$, and the dissipation term, $D = D(n)$ can be rewritten as
\begin{align}
  \label{e:HPhi}H(n) & \defeq \int_0^\infty [xn +\Phi(x, n)]dx,
  \\
  \nonumber
  \Phi(x, n) & \defeq n\ln n -(n+x^2)\ln(n+x^2)+x^2 \ln (x^2)
    \\
    \nonumber
    &= -n\ln \paren[\Big]{1 + \frac{x^2}{n}}
      -x^2 \ln \paren[\Big]{ 1 + \frac{n}{x^2} }\,,
  \\
  \nonumber
  \llap{\text{and}\qquad} D(n) &\defeq \int_0^\infty n(n+x^2)|\partial_x h(x, n)|^2 \, dx \, .
\end{align}

In order to justify~\eqref{e:dtH} we need to ensure~$n > 0$ (so that~$D$ is defined), and show that $J h$ vanishes at both zero and infinity.
We do each of these below.

\begin{lemma}\label{l:nPositive}
If $n_0$ is not identically~$0$, then $n_t(x) >0$ for every $x>0$, $t>0$.
\end{lemma}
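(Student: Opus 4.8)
The plan is to exploit the fact that, away from the degenerate boundary $x = 0$, equation~\eqref{e:komp} is a uniformly parabolic equation with smooth coefficients, and to combine the strong minimum principle with the continuity of $n$ in $L^1$ at $t = 0$. First I would rewrite~\eqref{e:komp} in non-divergence form. Expanding $\partial_x J$ gives
\[
  \partial_t n = x^2 \partial_x^2 n + (x^2 + 2n)\,\partial_x n + 2(x-1)\,n\,.
\]
Freezing the factor $2n$ in the first-order term, we may regard $n$ as a solution of the \emph{linear} parabolic equation $\partial_t n = a\,\partial_x^2 n + b\,\partial_x n + c\,n$ with $a(x) = x^2$, $b(x,t) = x^2 + 2n(x,t)$, and $c(x) = 2(x-1)$. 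By Theorem~\ref{t:exist} we have $n \in C^{2,1}((0,\infty)^2)$, so these coefficients are smooth, and on any compact rectangle contained in $(0,\infty)^2$ the equation is uniformly parabolic (since $a = x^2$ is bounded below away from $x=0$) with bounded coefficients.

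Next, suppose toward a contradiction that $n_{t_0}(x_0) = 0$ for some $x_0 > 0$ and $t_0 > 0$. I would fix a bounded open interval $I = (r, R)$ with $x_0 \in I$ and $0 < r < R < \infty$, and work on the cylinder $I \times (0, t_0]$. There the zeroth-order coefficient $c = 2(x-1)$ is bounded, so after the substitution $v = e^{-\lambda t} n$ with $\lambda$ large enough, $v$ solves a uniformly parabolic equation on $I \times (0, t_0]$ whose zeroth-order coefficient has the sign compatible with the strong minimum principle. Since $n \geq 0$ we have $v \geq 0$, and $v$ attains the value $0$, its minimum, at the interior point $(x_0, t_0)$. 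The strong minimum principle (see for instance~\cite[\S7.1.4]{Evans98}, applied to $-v$) then forces $v \equiv 0$, and hence $n \equiv 0$, on $I \times (0, t_0]$.

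Because the interval $I$ containing $x_0$ was arbitrary, I would exhaust $(0,\infty)$ by a nested increasing family of such bounded intervals: once $n$ vanishes identically on $I \times (0,t_0]$, any larger interval $I' \supset I$ contains interior points where $n = 0$, so a second application of the strong minimum principle yields $n \equiv 0$ on $I' \times (0,t_0]$, and so on. This gives $n \equiv 0$ on all of $(0,\infty) \times (0, t_0]$. Finally I would invoke $n \in C([0,\infty); L^1)$ from Theorem~\ref{t:exist}: since $n_t = 0$ in $L^1$ for every $t \in (0, t_0]$ while $n_t \to n_0$ in $L^1$ as $t \to 0^+$, it follows that $n_0 = 0$ almost everywhere, contradicting the hypothesis that $n_0$ is not identically $0$.

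The step requiring the most care is the degeneracy of the equation at $x = 0$ together with the unboundedness of the coefficients $a = x^2$ and $b = x^2 + 2n$ as $x \to \infty$; these obstruct a direct application of the maximum principle on the whole half-line, which is precisely why I localize to compact subintervals $I$ and then exhaust. The indefinite sign of $c = 2(x-1)$ is a minor, routine point, dispatched by the exponential rescaling $v = e^{-\lambda t} n$ on each bounded cylinder.
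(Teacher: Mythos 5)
Your proof is correct and follows essentially the same route as the paper: the paper's argument likewise views the equation $\partial_t n = x^2\partial_x^2 n + (x^2+2n)\partial_x n + 2(x-1)n$ as strictly parabolic on truncated cylinders $(\delta,R)\times(0,T]$ with zeroth-order coefficient bounded below, applies the strong minimum principle of~\cite[\S7.1.4]{Evans98}, and removes the truncation by sending $\delta\to 0$. Your write-up merely makes explicit several steps the paper compresses --- the rescaling $v=e^{-\lambda t}n$, the exhaustion over nested intervals, and the contradiction with $n_0\not\equiv 0$ via the $L^1$ continuity at $t=0$ --- all of which are sound.
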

\begin{proof} 
  For any $\delta >0$, the equation
  $$
  \partial_t n =x^2 \partial_x^2 n +(x^2+2n)\partial_x n +2(x-1)n, \quad x\in (\delta, R), \quad t>0,
  $$
  is strictly parabolic, and the zeroth order coefficient is bounded from below.
  Thus by the strong minimum principle (see for instance~\cite[\S7.1.4]{Evans98}), $n_t(x) > 0$ for all $t > 0$, $x > \delta$.
  Sending $\delta \to 0$ finishes the proof.
\end{proof}

For the remainder of this section we will assume that the initial data is not identically~$0$, and hence the solution is strictly positive on $(0, \infty)^2$.
Next, to show that~\eqref{e:dtH} holds we need to show $Jh$ vanishes both at~$0$ and at infinity.
We use an averaging argument near $x=0$ (similar to what was used in the proof of Proposition~\ref{p:lossFormula}).
Unfortunately, as $x \to \infty$, our existence results \emph{do not} provide enough decay to guarantee that~$J h$ vanishes.
However, if $n_0(x)$ decays fast enough, then the comparison principle and our super-solutions~\eqref{d:S} provide enough decay to show that $J h$ vanishes as $x \to \infty$.
This is what we use to rigorously prove~\eqref{e:dtH}.

\begin{proof}[Proof of Lemma~\ref{ent}]
  First note that since $n_0(x) \leq C_0(1 + x^2) e^{-x}$, there must exist~$\gamma > 0$ such that $n_0 \leq S_\gamma$.
  (Recall $S_\gamma$ is the stationary super-solution defined in~\eqref{d:S}.)
  Thus using the comparison principle (Corollary~\ref{c:Sbound}) we must have $n_t(x) \leq S_\gamma(x)$ for all $t > 0$ and $x \geq 0$.

  Now fix $0 < \epsilon < R$, and define
  \begin{equation*}
    \zeta(x)=
      \begin{dcases}
	0 & 0\leq x\leq \epsilon\,, \\
	\tfrac{x}{\epsilon} -1 & \epsilon \leq x\leq 2\epsilon\,,\\
	1 & 2\epsilon \leq x\leq R\,, \\
	R+1-x, & R \leq x \leq R+1,\\
	0 & x\geq R+1\,.
      \end{dcases}
  \end{equation*}
  Multiplying~\eqref{e:komp} by $h(x, n)\zeta(x)$ and integrating by parts gives
  \begin{align}
    \nonumber
    \int_s^t \int_{\epsilon}^{R+1} \partial_t n \, h \, \zeta(x) \, dx \, d\tau
      & =-\int_s^t  \int_{\epsilon}^{R+1}  n(n+x^2)|\partial_x h(x, n)|^2 \zeta(x) \, dx
    \\
    \label{en1}
      & \qquad - \int_s^t \avint_{\epsilon}^{2\epsilon} J \, h \, dx \, d\tau
	+ \int_s^t \int_{R}^{R+1} J \, h \, dx \, d\tau \,.
  \end{align}
Note that $h \partial_t n =\partial_t (xn+\Phi)$, hence the left hand side of the above reduces to
\begin{equation*}
  \int_{\epsilon}^{R+1}(x n+ \Phi) \zeta(x)dx \Big|_s^{t} \,.
\end{equation*}
For the right hand side we note
\begin{equation}\label{e:dPhi}
\partial_n \Phi=\ln \paren[\Big]{\frac{n}{n+x^2}}
\qquad\text{and}\qquad
\partial_x \Phi= 2x \ln \paren[\Big]{\frac{x^2}{n+x^2}} \,.
\end{equation}
Thus we can regroup terms in $Jh$ to obtain
\begin{align*}
  Jh & =(x^2\partial_x n +n^2 +(2x-x^2)n)(x +\partial_n \Phi) \\
  & =\partial_x (x^2(xn +\Phi)) -3x^2 n -2x\Phi -x^2 \partial_x \Phi +(n+2x-x^2)(xn + n \partial_n \Phi)\\
  &=\partial_x (x^2(xn +\Phi)) -B \,,
\end{align*}
where
\begin{equation*}
B= xn(x^2+x-n)+2x\Phi +x^2 \partial_x \Phi +n(x^2-2x-n)\partial_n \Phi\,.
\end{equation*}

For $\epsilon$ small and $x \in [\epsilon , 2\epsilon]$,  we have
\begin{equation*}
  0< n \leq S_\gamma \leq \gamma + 2\epsilon \leq \gamma + 2
  \qquad\text{and}\qquad
  |\Phi|\leq C \,,
\end{equation*}
for some finite constant $C$.
We will subsequently allow $C$ to increase from line to line, provided it does not depend on $\epsilon$ or $R$.
Note also
\begin{equation}\label{e:xdxphi}
x|\partial_x \Phi| =2x^2\ln \left( 1+\frac{n}{x^2}\right) \leq 2n
\end{equation}
and 
$$
n|\partial_n \Phi|= n\ln \left( 1+\frac{x^2}{n}\right)\leq x^2. 
$$
These combined ensure $|x^2(xn+\Phi)|\leq C\epsilon^2$ and
$$
|B| \leq xn|x^2+x-n| +2x|\Phi| +2xn +(n+2x-x^2)x^2 \leq C\epsilon.
$$
Hence 
\begin{align}
  \nonumber
\abs[\Big]{-\int_s^t \avint_{\epsilon}^{2\epsilon} J\, h \, dx \, d\tau }
  &= \abs[\Big]{ -\frac{1}{\epsilon}\int_s^t \brak[\big]{x^2(xn+\Phi)}_\epsilon^{2\epsilon} \, d\tau 
+\int_s^t \avint_{\epsilon}^{2\epsilon}B \, dx \, d\tau }
  \\
  \label{e:Jh0}
  &\leq C(t-s)\epsilon.
\end{align}

We now bound the last term in~\eqref{en1}.
Note
$$
\int_s^t \int_{R}^{R+1} J \, h \, dx \, d\tau
  = \int_s^t \brak[\Big]{x^2(xn+\Phi)}_R^{R+1} \, d\tau +\int_s^t \int_{R}^{R+1}B \, dx \, d\tau \, .
$$
For $x \in [R,  R+1]$, we have
\begin{equation*}
  n_t(x) \leq S_\gamma(x)
    = \frac{x^2 e^{-x}}{ (1-e^{-x})^{2} } (\gamma+1-e^{-x}) \,,
\end{equation*}
and hence
$$
  n_t(x) \leq C R^2 e^{-R}\,, \qquad
$$
for all sufficiently large~$R$ and $x \in [R, R+1]$.
Therefore, since $n\mapsto|n\ln n|$ is increasing for $0<n<e^{-1}$, we find
\begin{align*}
  |\Phi| &= n \ln \paren[\Big]{ \frac{n + x^2}{n} } + x^2 \ln \paren[\Big]{ \frac{n + x^2}{x^2} }
    \leq C R^3 e^{-R}\,.
\end{align*}
As before we still use $x|\partial_x \Phi|\leq 2n$ (inequality~\eqref{e:xdxphi}), but we bound $n\partial_n \Phi$ differently.
Namely,
$$
 n|\partial_n \Phi|= n \ln\paren[\Big]{ 1 + \frac{x^2}{n} }
  \leq C R^3 e^{-R}\,.
$$
This ensures
$$
|x^2(xn+\Phi)|\leq C R^5 e^{-R}\,,
$$ 
and 
$$
|B| \leq xn(x^2+x) +2x|\Phi| +2x n +x^2n|\partial_n \Phi|  \leq C R^5 e^{-R}\,.
$$
Hence, 
\begin{equation}\label{e:Jhinf}
  \abs[\Big]{ \int_s^t \int_{R}^{R+1} J\, h \, dx \, d\tau }
    \leq C(t-s)R^5 e^{-R}
    \xrightarrow{R \to \infty} 0\,.
\end{equation}
\medskip

Finally sending~$\epsilon \to 0$ and $R \to \infty$~\eqref{en1} implies
\begin{equation}\label{eq:ent}
  H(n(\cdot, t))=H(n(\cdot, s)) - \int_s^t D(n(\cdot, \tau))\,d\tau\,.
\end{equation}
Since $n$ is smooth for $x > 0$, $t > 0$ this implies~\eqref{e:dtH} as desired.
\end{proof}
\begin{remark}
  In the previous we used the decay assumption on $n_0$ to ensure $n \leq S_\gamma$.
  We used $n \leq S_\gamma$ to obtain both the vanishing of $Jh$ near both zero (equation~\eqref{e:Jh0}), and at infinity (equation~\eqref{e:Jhinf}).
  The use of $n \leq S_\gamma$ to show that $J h$ vanishes at $0$ can be avoided by using
  Proposition~\ref{p:nBdd}
  instead.
  We have so far not managed to avoid the use of $n \leq S_\gamma$ to show that $J h$ vanishes at infinity.
\end{remark}

The entropy dissipation lemma suggests that the long time limit of solutions to~\eqref{e:komp}--\eqref{e:noflux} is an equilibrium solution for which
$$
J(x, n)=n(n+x^2)\partial_x h(x, n)=0 \,.
$$
This equation can be directly solved, and the nonnegative solutions are precisely the Bose--Einstein equilibria~\eqref{e:fhat} (equivalently~\eqref{e:nhat}).

\begin{lemma}\label{l:stationary}
  Let $b > 0$ be a~$C^1$ function on $[0, \infty)$.
  Then $D(b) = 0$ if and only if $J(b) = 0$ if and only if there exists $\mu \in [0, \infty)$ such that $b = \hat n_\mu$.
\end{lemma}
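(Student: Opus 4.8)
The plan is to route both equivalences through the single pointwise condition $\partial_x h(x,b) \equiv 0$ on $(0,\infty)$. Since $b$ is $C^1$ and strictly positive on $(0,\infty)$, the weight $b(b+x^2)$ is continuous and strictly positive there. Hence in $D(b) = \int_0^\infty b(b+x^2)\,\abs{\partial_x h(x,b)}^2\,dx$ the integrand is continuous and nonnegative, so $D(b)=0$ holds if and only if the integrand vanishes identically, i.e. if and only if $\partial_x h(x,b)=0$ for every $x>0$. Likewise, from the identity $J = b(b+x^2)\,\partial_x h(x,b)$ (equation~\eqref{e:Jh}) and positivity of the weight, $J(b)\equiv 0$ is equivalent to $\partial_x h(x,b)\equiv 0$ as well. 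This already yields $D(b)=0 \iff J(b)=0$, with both equivalent to $\partial_x h(x,b)\equiv 0$.

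It then remains to identify the functions $b$ for which $\partial_x h(x,b)\equiv 0$. This says that $h(x,b) = x - \ln(1 + x^2/b)$ is constant in $x$; I would write this constant as $-\mu$. Exponentiating, $\ln(1 + x^2/b) = x + \mu$ gives $1 + x^2/b = e^{x+\mu}$, hence $b = x^2/(e^{x+\mu}-1) = \hat n_\mu(x)$. For the converse, each $\hat n_\mu$ satisfies $h(x,\hat n_\mu) = -\mu$ identically, so $\partial_x h \equiv 0$; this is precisely the computation verifying that the Bose--Einstein profiles are equilibria, so the reverse implication is immediate.

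The only point requiring care — and the main (albeit mild) obstacle — is the admissible range of $\mu$ and the behavior at the left endpoint. Since $\hat n_\mu(0)=0$, the hypothesis ``$b>0$'' is to be read as positivity on the open interval $(0,\infty)$, which is exactly the regime supplied by Lemma~\ref{l:nPositive} for the limits we care about. On $(0,\infty)$, positivity of $b = x^2/(e^{x+\mu}-1)$ forces $e^{x+\mu}>1$, that is $x+\mu>0$, for all $x>0$; letting $x\to 0^+$ yields $\mu \ge 0$. Indeed, if $\mu<0$ the candidate profile would be singular at $x=-\mu$ and negative for $0<x<-\mu$, so it could not be a positive $C^1$ function on $(0,\infty)$. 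This pins $\mu$ to $[0,\infty)$ and completes the characterization.
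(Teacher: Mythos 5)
Your proof is correct and follows essentially the same route as the paper's: both equivalences are reduced to the constancy of $h(x,b)$ via the factorization $J = b(b+x^2)\,\partial_x h(x,b)$, after which solving $h(x,b) = -\mu$ for $b$ and invoking positivity of $b$ pins down $\mu \in [0,\infty)$ and $b = \hat n_\mu$. You merely spell out details the paper treats as immediate (the equivalence $D(b)=0 \iff J(b)\equiv 0$ via continuity of the nonnegative integrand, and the reading of the hypothesis $b>0$ on the open interval, consistent with $\hat n_\mu(0)=0$), which is a faithful elaboration rather than a different argument.
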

We remark that the stationary solutions $\hat n_\mu$ can also be characterized as minimizers of the quantum entropy functional,
but we do not need this characterization in our proofs.
See \cite{EscobedoMischlerEA05} for a precise analysis of such extrema.
\begin{proof}[Proof of Lemma~\ref{l:stationary}]
  Clearly $D(b) = 0$ if and only if~$J(b) = 0$.
  Also~$J(b) = 0$ if and only if $h(x, b) = -\mu$ for some constant~$\mu \in \R$.
  That is,
  \begin{equation*}
    x + \ln \paren[\Big]{\frac{b}{b + x^2}} = -\mu\,.
  \end{equation*}
  Solving for~$b$ and using the fact that~$b > 0$ implies $\mu \in [0, \infty)$ and $b = \hat n_\mu$.
\end{proof}

\subsection{The \texorpdfstring{$\omega$}{omega}-limit Set.}

Our aim in this section is to show that the $\omega$-limit set of any trajectory is non-empty, and invariant under the dynamics.
It will be convenient to let~$U_t$ be the solution operator of~\eqref{e:komp}--\eqref{e:noflux}.
That is, given any nonnegative initial data $a$ satisfying~\eqref{e:expDecay}, let
\[
  U_t a \defeq n_t\,,
\]
where~$n$ is the solution of \eqref{e:komp}--\eqref{e:noflux} with initial data $n_0 = a$.
Note, we assumed the faster decay~\eqref{e:expDecay} on the initial data (as opposed to the slower decay~\eqref{e:x2n} that is required to prove existence).
Under the assumption~\eqref{e:expDecay} there exists $\gamma > 0$ such that $a \leq S_\gamma$, and hence by Corollary~\ref{c:Sbound} we must have $U_t a \leq S_\gamma$ for all $t \geq 0$.
Thus~$\norm{U_t a}_{L^\infty} \leq \norm{S_\gamma}_{L^\infty} \leq 1 + 2\gamma$.
Define the set~$A_\gamma$ by
\begin{gather*}
  A_\gamma \defeq \set{ a\in L^\infty(0,\infty) \st 0 \leq a(x) \leq S_\gamma
    \quad\text{for } x\in(0,\infty) }\,,
\end{gather*}
and note that~$A_\gamma$ is positively invariant under the semi-flow induced by the solution operator.
That is,
\[
  U_t A_\gamma \subseteq A_\gamma \,, \qquad \text{for all } t\geq 0\,.
\]
Given any~$a \in A_\gamma$, recall the usual $\omega$-limit is defined by
$$
\omega(a) = \bigcap_{s>0} \overline{
\set{U_t a \st  t\geq s} }\,,
$$
where the over-line notation above denotes the closure in $L^1$.
That is, $b\in\omega(a)$ if and only if there is a sequence of times $t_k\to \infty$ such that $\norm{U_{t_k}a- b}_{L^1} \to 0$.

The main purpose of this section is to show that the~$\omega$-limit set is non-empty.
\begin{proposition}[The $\omega$-limit set]\label{p:omegalimit}
  For every $a\in A_\gamma$,
Then $\omega(a)$ is not empty, and is invariant under $U(t)$, with
\begin{equation}\label{uo}
U_t \paren[\big]{ \omega(a) } = \omega(a), \qquad\text{for all }  t>0\,.
\end{equation}
\end{proposition}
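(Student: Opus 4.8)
The plan is to follow the standard structure of a LaSalle-type invariance argument, splitting the proof into two parts: first establishing that $\omega(a)$ is non-empty by proving precompactness of the trajectory $\set{U_t a : t \ge s}$ in $L^1(0,\infty)$, and then establishing the forward and backward invariance identities using the $L^1$ contraction property (Lemma~\ref{l:l1contract}). Throughout I would use that $A_\gamma$ is positively invariant and that every $a \in A_\gamma$ satisfies $0 \le U_t a \le S_\gamma$ by Corollary~\ref{c:Sbound}, where $S_\gamma \in L^1(0,\infty)$ is bounded near $x=0$ and decays like $x^2 e^{-x}$ at infinity.

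For non-emptiness, fix $s > 0$ and show that $\mathcal{T}_s \defeq \set{U_t a : t \ge s}$ is precompact in $L^1(0,\infty)$. The uniform bound $U_t a \le S_\gamma$ provides both a fixed integrable dominating function and tightness: since $\int_0^\delta S_\gamma \, dx \to 0$ as $\delta \to 0$ and $\int_R^\infty S_\gamma \, dx \to 0$ as $R \to \infty$, the mass of $U_t a$ escaping to the endpoints $x = 0$ and $x = \infty$ is controlled uniformly in $t$. For compactness on a fixed compact interval $[\delta, M] \subset (0, \infty)$, I would invoke the Oleinik inequality (Lemma~\ref{l:oleinik}): for $t \ge s$ it gives the uniform slope bound $\partial_x (U_t a) \ge -\frac{1}{2s} - \frac{5M}{2} - \frac{\alpha}{2}$ on $[\delta, M]$. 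Together with $\norm{U_t a}_{L^\infty} \le 1 + 2\gamma$, this bounds the total variation of $U_t a$ on $[\delta, M]$ uniformly in $t \ge s$ (the negative variation is controlled by the slope bound, and the positive variation by the negative variation plus the oscillation). Helly's selection theorem together with a diagonal argument over an exhaustion of $(0,\infty)$ then extracts, from any sequence $t_k \to \infty$, a subsequence along which $U_{t_k} a$ converges pointwise a.e.; the dominated convergence theorem with dominating function $S_\gamma$ upgrades this to convergence in $L^1(0,\infty)$. Hence $\omega(a) \neq \emptyset$.

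For the forward inclusion $U_t(\omega(a)) \subseteq \omega(a)$, let $b \in \omega(a)$, so $U_{t_k} a \to b$ in $L^1$ for some $t_k \to \infty$. Since $A_\gamma$ is closed under $L^1$ convergence, $b \in A_\gamma$ and $U_t b$ is well defined via Theorem~\ref{t:exist}. The $L^1$ contraction then gives $\norm{U_{t_k + t} a - U_t b}_{L^1} = \norm{U_t(U_{t_k}a) - U_t b}_{L^1} \le \norm{U_{t_k} a - b}_{L^1} \to 0$, and since $t_k + t \to \infty$ this shows $U_t b \in \omega(a)$. For the reverse inclusion $\omega(a) \subseteq U_t(\omega(a))$, take again $b \in \omega(a)$ with $U_{t_k}a \to b$, and consider the backward sequence $U_{t_k - t} a$ for $t_k > t$. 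By the precompactness established above, a subsequence converges in $L^1$ to some $c$, which lies in $\omega(a)$ since $t_k - t \to \infty$; applying $U_t$ and using the contraction once more, $U_t c = \lim U_t(U_{t_k - t} a) = \lim U_{t_k} a = b$, so $b \in U_t(\omega(a))$. Combining the two inclusions yields~\eqref{uo}.

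The main obstacle is the non-emptiness step. The degeneracy of~\eqref{e:komp} at $x = 0$ means no standard interior parabolic compactness is available near the boundary, so precompactness must instead be pieced together from the Oleinik slope bound (which supplies uniform local bounded variation on compact subsets of $(0,\infty)$) and the stationary super-solution $S_\gamma$ (which supplies tightness and a uniform integrable majorant at both endpoints). The invariance step, by contrast, is essentially formal once the $L^1$ contraction is in hand, the only points requiring care being the closedness of $A_\gamma$ in $L^1$ and the fact that $U_t$ is defined on all of $\omega(a)$.
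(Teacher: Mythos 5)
Your proposal is correct, and its ingredients --- the Oleinik slope bound, the stationary super-solution $S_\gamma$, Helly selection with dominated convergence, and the $L^1$ contraction --- are the same as the paper's, but it deviates in two substantive ways. For non-emptiness, the paper proves a single \emph{global} BV bound (Lemma~\ref{l:BV}): the Oleinik inequality (Lemma~\ref{l:oleinik}) controls $\int_0^R \abs{\partial_x n_{t_k}}\,dx$, while the tail is handled by invoking the uniform-in-time decay $x^2\partial_x n_t \to 0$ (equation~\eqref{e:x2dxnVanish} with $T=\infty$, via Proposition~\ref{p:nBdd}) to get $\abs{\partial_x n_t}\le x^{-2}$ for $x\ge R$, $t \ge t_1$; your local-BV-on-$[\delta,M]$ argument plus diagonal extraction, with $S_\gamma$ supplying the integrable majorant for dominated convergence, reaches the same $L^1$-precompactness without needing the gradient decay at infinity, at the cost of a slightly longer extraction --- both routes are sound. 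More significantly, your backward-invariance argument differs from, and in fact repairs, the printed proof: to establish $\omega(a) \subseteq U_t(\omega(a))$, the paper takes $\hat b \in U_t(\omega(a))$ and shows $\hat b \in \omega(a)$, which merely re-derives the forward inclusion $U_t(\omega(a)) \subseteq \omega(a)$ and leaves the claimed direction of~\eqref{uo} unproved as written. Your construction --- extract from the shifted sequence $U_{t_k-t}\,a$ (using the precompactness already established) a subsequence converging in $L^1$ to some $c$, note $c\in\omega(a)$ since $t_k-t\to\infty$, and conclude $U_t c=\lim_k U_{t_k}a=b$ by continuity of $U_t$ from the contraction --- is precisely the step actually needed for equality in~\eqref{uo}, and it explains why full precompactness of the trajectory (not just existence of one limit point) enters. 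Your side remarks are also correct on points the paper glosses over: $A_\gamma$ is closed under $L^1$ convergence (pass to an a.e.\ convergent subsequence to preserve $0\le b\le S_\gamma$), and any such limit satisfies~\eqref{e:x2n}, so $U_t$ is indeed well defined on all of $\omega(a)$.
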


The first step in proving Proposition~\ref{p:omegalimit} is to establish boundedness in $\BV$,  the space of bounded variation functions.
\begin{lemma}\label{l:BV}
  For every $a \in A_\gamma$, and any increasing sequence of times $0 < t_1 \leq t_2 \dots$ that diverges to infinity, the family $\set{ U_{t_k} a \st k \in \N }$ is bounded in $\BV$.
\end{lemma}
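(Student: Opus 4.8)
The plan is to bound the full $\BV$ norm $\norm{U_{t_k}a}_{\BV}=\norm{U_{t_k}a}_{L^1}+\operatorname{TV}(U_{t_k}a)$ uniformly in $k$. Writing $n=U_\cdot a$, the $L^1$ part is immediate: the loss formula (Proposition~\ref{p:lossFormula}) gives $\norm{n_{t_k}}_{L^1}=N(n_{t_k})\le N(n_0)$. Since $t_k\to\infty$, all but finitely many $t_k$ satisfy $t_k\ge 1$, and each of the finitely many remaining times contributes a finite individual total variation (for $t>0$ the solution is $C^{2,1}$, and the estimates below give finiteness of $\int_0^\infty\abs{\partial_x n_t}\,dx$ for each fixed $t>0$). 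Hence it suffices to bound $\operatorname{TV}(n_t)=\int_0^\infty\abs{\partial_x n_t}\,dx$ uniformly for $t\ge 1$. I would split this integral at a radius $R\ge 1$, handling $[0,R]$ with the Oleinik inequality and $(R,\infty)$ with the uniform decay of the flux gradient.

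On $[0,R]$ I would use the pointwise identity $\abs{\partial_x n_t}=\partial_x n_t+2(\partial_x n_t)^-$ together with the Oleinik bound (Lemma~\ref{l:oleinik}), which gives $(\partial_x n_t)^-\le \varphi_t$ with $\varphi_t(x)=\tfrac{\alpha}2+\tfrac{5x}2+\tfrac1{2t}$, where $\alpha$ is fixed by the global bound $\norm{n}_{L^\infty}\le 1+2\gamma$ coming from Corollary~\ref{c:Sbound} and $a\in A_\gamma$. Integrating,
\[
\int_0^R \abs{\partial_x n_t}\,dx \le \bigl(n_t(R)-n_t(0)\bigr)+2\int_0^R \varphi_t\,dx \le (1+2\gamma)+\alpha R+\tfrac{5R^2}2+\tfrac{R}{t},
\]
which for $t\ge 1$ is bounded by a constant $C_R$ independent of $k$. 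The key point making all constants uniform in time is that the $L^\infty$ bound, and hence $\alpha$, are independent of $T$ by Proposition~\ref{p:nBdd}.

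For the tail I would invoke the uniform flux-gradient decay \eqref{e:x2dxnVanish}, upgraded to $T=\infty$ by Proposition~\ref{p:nBdd}: given $\epsilon>0$ there is $R\ge 1$ with $x^2\abs{\partial_x n_t(x)}\le \epsilon$ for all $x\ge R$ and all $t\ge 1$. The crucial observation is that this borderline decay is exactly integrable against the weight $x^{-2}$, so
\[
\int_R^\infty \abs{\partial_x n_t}\,dx \le \epsilon\int_R^\infty \frac{dx}{x^2}=\frac{\epsilon}{R}\le \epsilon,
\]
uniformly in $k$. Combining the two pieces yields $\operatorname{TV}(n_{t_k})\le C_R+\epsilon$ for all large $k$, and therefore uniform boundedness in $\BV$.

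The compact estimate is routine, being a direct application of the Oleinik inequality. The main obstacle is the tail: one needs $\partial_x n$ to decay fast enough at infinity to be integrable there, and this uniformly over all large times. This is delicate because the existence theorem only provides local-in-time boundedness a priori; it is resolved precisely by the extension of the decay estimate \eqref{e:x2dxnVanish} to $T=\infty$ in Proposition~\ref{p:nBdd} (via the explicit decaying super-solution and classical regularity), after which the $o(x^{-2})$ decay of $\partial_x n$ integrates to an arbitrarily small tail contribution.
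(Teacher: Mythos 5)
Your proof is correct and follows essentially the same route as the paper's: the Oleinik inequality (Lemma~\ref{l:oleinik}) with $\alpha$ fixed via the uniform bound $n_t \leq S_\gamma$ from Corollary~\ref{c:Sbound} controls the total variation on a compact interval, and the decay~\eqref{e:x2dxnVanish}, upgraded to $T=\infty$ by Proposition~\ref{p:nBdd}, handles the tail. The only cosmetic differences are that you treat the finitely many times $t_k<1$ separately where the paper simply keeps the uniform $R/t_1$ term (valid since the sequence is increasing with $t_1>0$), and you additionally record the $L^1$ part of the $\BV$ norm via the loss formula, which the paper leaves implicit.
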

\begin{proof}
  By Lemma~\ref{l:oleinik} we have~\eqref{e:dxnLower} for every $x > 0$ and $t \in (0, T]$.
  By Corollary~\ref{c:Sbound} we know~$\norm{n_t}_{L^\infty} \leq \norm{S_\gamma}_{L^\infty}$ which is independent of~$t$.
  Thus choosing
  \begin{equation*}
    \alpha \geq \sqrt{6 \norm{S_\gamma}_{L^\infty} + 1} - 1\,,
  \end{equation*}
  implies the first inequality in~\eqref{e:dxnLower} holds for all $x > 0$ and $t > 0$.
  This gives
  $$
    \abs{\partial_x n_t(x)} \leq
      \partial_x n_t(x) +\frac{1}{t}+5x+\alpha \,,
      \quad\text{for all}\quad
      x>0,~ t > 0\,.
  $$
  Now by~\eqref{e:x2dxnVanish} and Proposition~\ref{p:nBdd} there exists~$R \geq \frac{1}{\sqrt{t_1}}$ such that
  \begin{equation*}
    \abs{\partial_x n_t(x)} \leq \frac{1}{x^2}\,,
    \qquad\text{for all } x \geq R,~ t \geq t_1\,.
  \end{equation*}
  Thus for every~$k \in \N$ we have
  \begin{align*}
    \int_0^\infty \abs{\partial_x n_{t_k}(x)} \, dx
      &\leq
	\int_0^R \paren[\Big]{ \partial_x n_{t_k} + \frac{1}{t_k}+5x+\alpha } \, dx
	+\int_R^\infty \frac{dx}{x^2}
      \\
      & \leq S_\gamma(R) +\frac{R}{t_1} +\frac{5 R^2}{2} + \alpha R + \frac1R\,, 
   \end{align*}
   concluding the proof.
\end{proof}

\begin{proof}[Proof of Proposition~\ref{p:omegalimit}]
  By Lemma~\ref{l:BV} and Helley selection principle there exists an increasing sequence of times $(t_k) \to \infty$ such that $(U_{t_k} a)$ converges in~$L^1$.
  By definition the limit must belong to~$\omega(a)$ and hence~$\omega(A_\gamma)$ is non-empty.

  To prove \eqref{uo}, choose any $b\in \omega(a)$.
  There must exist a sequence of times $(t_k) \to \infty$ such that
  $$
  \|U_{t_k} a-b\|_{L^1} \xrightarrow{k \to \infty} 0\,.
  $$
  By Lemma~\ref{l:l1contract} this implies
  \begin{equation*}
    \norm{U_{t_k + t} a-U_t b}_{L^1}
    \leq \norm{U_{t_k} a- b}_{L^1}
    \xrightarrow{k \to \infty} 0\,,
  \end{equation*}
  and hence $U_t b \in \omega(a)$.
  This shows $U_t( \omega(a) ) \subseteq \omega(a)$.

  For the reverse inclusion, choose any~$\hat b \in U_t( \omega(a) )$.
  By definition, there exists~$b^* \in \omega(a)$ such that $U_t b^* = \hat b$.
  For this~$b^*$ there exists a sequence~$t_k \to \infty$ such that $(U_{t_k} a) \to b^*$ in~$L^1$.
  Hence, by Lemma~\ref{l:l1contract} we have
  \begin{equation*}
    \norm{U_{t + t_k} a - \hat b}_{L^1} 
    = \norm{U_{t + t_k} a - U_t b^*}_{L^1} 
    \leq \norm{U_{t_k} a - b^*}_{L^1} \xrightarrow{k \to \infty} 0\,,
  \end{equation*}
  which shows $\hat b \in \omega(a)$.
  Thus $\omega(a) \subseteq U_t( \omega(a) )$, finishing the proof of~\eqref{uo}.
\end{proof}

\subsection{LaSalle\texorpdfstring{'}{}s Invariance Principle (Theorem~\ref{t:lim}).}

The long time convergence of solutions (Theorem~\ref{t:lim}) can now be proved by adapting LaSalle's invariance principle to our situation.
We first prove Theorem~\ref{t:lim} assuming an exponential tail bound on the initial data, and then prove the general case using the $L^1$ contraction.

\begin{proposition}[LaSalle's invariance principle]\label{p:lasalle}
  Suppose $a \in A_\gamma$ is not identically~$0$, let $n_t =U_t a$, and set
  \begin{equation*}
    H_\infty = \lim_{t \to \infty} H(n_t)\,.
  \end{equation*}
  Then there exists a unique $\mu \in [0, \infty)$ such that 
  \begin{equation*}
    \omega(a) = \set{ \hat n_\mu}\,,
    \qquad\text{and}\qquad
    \lim_{t \to \infty} \norm{U_t a - \hat n_\mu}_{L^1} = 0\,.
  \end{equation*}
  This~$\mu$ can be uniquely determined from the relation
  \begin{equation}\label{e:Hmu}
    H(\hat n_\mu) = H_\infty\,.
  \end{equation}
\end{proposition}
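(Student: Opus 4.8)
The plan is to run LaSalle's invariance principle with the quantum entropy $H$ as Lyapunov functional, feeding in the equilibrium characterization of Lemma~\ref{l:stationary} and the invariance of $\omega(a)$ from Proposition~\ref{p:omegalimit}. First I would record monotonicity of $H$ along the trajectory: since $a\in A_\gamma$ forces $n_t\le S_\gamma$ and hence the exponential decay~\eqref{e:expDecay}, Lemma~\ref{ent} applies and gives $\partial_t H(n_t)=-D(n_t)\le 0$. A short estimate shows $H$ is bounded on $A_\gamma$: using $n\ln(1+x^2/n)\le S_\gamma\ln(1+x^2/S_\gamma)$ and $x^2\ln(1+n/x^2)\le n\le S_\gamma$, the integrand $|xn+\Phi(x,n)|$ is dominated by a fixed integrable function $G(x)$ built from $S_\gamma$. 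Hence $H(n_t)$ is monotone and bounded, so $H_\infty=\lim_{t\to\infty}H(n_t)$ exists and $H(n_t)\downarrow H_\infty$. This same $G$ will serve as the dominating function later.

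The step I expect to be the genuine obstacle, and the one special to the Kompaneets setting because photons are actually lost, is excluding $0\in\omega(a)$, i.e. that the trajectory drains entirely. I would rule this out by a mass lower bound from the comparison principle. Set $m_0\defeq n_0\varmin\hat n_0\le \hat n_0$ and let $m$ be its solution. Since $\hat n_0$ is a stationary super-solution and $m_0\le\hat n_0$, Lemma~\ref{l:comparison} gives $m_t\le\hat n_0$, so $m_t(0)=0$ and the loss formula (Proposition~\ref{p:lossFormula}) yields $N(m_t)=N(m_0)$ for all $t$. Because $m_0\le n_0$, comparison also gives $m_t\le n_t$, whence $N(n_t)\ge N(m_0)=\int_0^\infty(n_0\varmin\hat n_0)\,dx>0$, the positivity coming from $n_0\not\equiv 0$ and $\hat n_0>0$ on $(0,\infty)$. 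Since $\norm{\cdot}_{L^1}=N(\cdot)$ on nonnegative functions, $L^1$ convergence $U_{t_k}a\to b$ forces $N(b)\ge\int_0^\infty(n_0\varmin\hat n_0)\,dx>0$, so every $b\in\omega(a)$ is not identically $0$.

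With the zero state excluded I would run the invariance argument. Fix $b\in\omega(a)$ and $t_k\to\infty$ with $U_{t_k}a\to b$ in $L^1$; passing to an a.e.-convergent subsequence and dominating by $G$, dominated convergence gives $H(U_{t_k}a)\to H(b)$, and since $H(U_{t_k}a)=H(n_{t_k})\to H_\infty$ we conclude $H\equiv H_\infty$ on $\omega(a)$. By the invariance~\eqref{uo}, $U_t b\in\omega(a)$ for every $t\ge 0$, so $t\mapsto H(U_t b)=H_\infty$ is constant; differentiating and using $\partial_t H+D=0$ gives $D(U_t b)=0$ for all $t>0$. As $b\not\equiv 0$, Lemma~\ref{l:nPositive} makes $U_t b>0$ and smooth, so Lemma~\ref{l:stationary} forces $U_t b=\hat n_{\mu(t)}$. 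A stationary solution is fixed by the flow, so $U_{s+t}b=U_s\hat n_{\mu(t)}=\hat n_{\mu(t)}$ shows $\mu(t)$ is constant for $t>0$; letting $t\to 0^+$ and using $n\in C([0,\infty);L^1)$ gives $b=\hat n_\mu$ for some $\mu\in[0,\infty)$.

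Finally I would pin down uniqueness and upgrade to full convergence. The map $\mu\mapsto H(\hat n_\mu)$ is strictly decreasing on $[0,\infty)$, since its derivative equals $-\mu\int_0^\infty x^2 e^{x+\mu}(e^{x+\mu}-1)^{-2}\,dx$, which is negative for $\mu>0$; hence $H(\hat n_\mu)=H_\infty$ has at most one solution. As $H\equiv H_\infty$ on $\omega(a)$ and every element is some $\hat n_\mu$, all elements carry the same $\mu$, so $\omega(a)=\{\hat n_\mu\}$ with $H(\hat n_\mu)=H_\infty$. To obtain $\norm{U_t a-\hat n_\mu}_{L^1}\to 0$, I would note the orbit $\{U_t a : t\ge t_1\}$ is precompact in $L^1$ by the uniform $\BV$ bound of Lemma~\ref{l:BV} and Helly's selection theorem; a precompact orbit whose $\omega$-limit set is the single point $\hat n_\mu$ must converge to $\hat n_\mu$, completing the proof.
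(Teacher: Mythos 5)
Your proposal is correct and follows the same skeleton as the paper's proof: entropy monotonicity via Lemma~\ref{ent} (valid on $A_\gamma$ because $n_t\le S_\gamma$), exclusion of the zero state by exactly the paper's argument (comparison with $n_0\varmin \hat n_0$ plus the loss formula to get a positive mass lower bound), constancy of $H$ on $\omega(a)$ by dominated convergence with an $S_\gamma$-built majorant, $D=0$ on the invariant $\omega$-limit set via Proposition~\ref{p:omegalimit}, identification with $\hat n_\mu$ by Lemma~\ref{l:stationary}, and uniqueness of $\mu$ from strict monotonicity of $\nu\mapsto H(\hat n_\nu)$. You deviate in two places, both defensible. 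First, where the paper applies Lemma~\ref{l:stationary} directly to $b$ (asserting $D(U_tb)=0$ ``for every $t\ge0$, in particular $D(b)=0$''), you apply it only to $U_tb$ for $t>0$, where Lemma~\ref{l:nPositive} and parabolic smoothing guarantee the positivity and $C^1$ regularity that the lemma actually requires, and then recover $b=\hat n_\mu$ by letting $t\to0^+$ using continuity in $L^1$; this is slightly more careful than the paper on the regularity of a bare $\omega$-limit point. Second, for the upgrade from subsequential to full convergence, the paper uses the $L^1$ contraction (Lemma~\ref{l:l1contract}) together with invariance of $\hat n_\mu$ to get that $t\mapsto\|U_ta-\hat n_\mu\|_{L^1}$ is nonincreasing past each $t_k$, whereas you argue by precompactness of the tail orbit (Lemma~\ref{l:BV}, Helly, and implicitly the uniform domination by $S_\gamma$ to control tails at infinity, exactly as in the paper's proof of Proposition~\ref{p:omegalimit}) plus the fact that a precompact orbit with singleton $\omega$-limit set converges. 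Both routes work; the paper's contraction argument is shorter and yields monotonicity of the distance to equilibrium for free, while yours is the generic dynamical-systems argument and would survive in settings without a contraction property.
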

\begin{proof}[Proof of Proposition~\ref{p:lasalle}]
  Choose any $b \in \omega(a)$ and a sequence~$(t_k) \to \infty$ such that $U_{t_k}(a) \to b$ in~$L^1$.
  We will show that~$b = \hat n_\mu$ for~$\mu$ given by~\eqref{e:Hmu}.
  By passing to a subsequence if necessary, we may also assume $U_{t_k}(a) \to b$ almost everywhere.
  We claim that $H(U_{t_k} a)$ also converges to $H(b)$, and hence~$H(b) = H_\infty$.
  To see this, observe
  \begin{equation*}
    \abs{H(n)} \leq x n + n + n \ln\paren[\Big]{ 1 + \frac{x^2}{n} }
  \end{equation*}
  and the right hand side is an increasing function of~$n$.
  Using this and the fact that $U_t a \leq S_\gamma$ we must have
  \begin{equation*}
    \abs{ H(U_{t_k}a) } \leq x S_\gamma + S_\gamma + S_\gamma \ln\paren[\Big]{1 + \frac{x^2}{S_\gamma} } \,,
  \end{equation*}
  which is integrable.
  Thus, by the dominated convergence theorem
  \begin{equation*}
    H(U_{t_k}(a)) \to H(b)\,,
  \end{equation*}
  and hence $H(b) = H_\infty$.

  We now claim that $b$ can not be identically~$0$.
  To see this let $\ubar{a} = a \varmin \hat n_0 \leq \hat n_0$.
  By Lemma~\ref{l:comparison} this implies~$U_t \ubar{a} \leq \hat n_0$ for all~$t \geq 0$ and hence~$U_t \ubar{a}(0) = 0$ for all $t \geq 0$.
  By~\eqref{e:loss} this means $N(U_t \ubar a) = N(\ubar a)$ for all $t \geq 0$.
  Since $\ubar a \leq a$ this implies
  \begin{equation*}
    0 < \int_0^\infty (\hat n_0 \varmin a) \, dx
     = N(\ubar a)
      = N(U_t \ubar a)
      \leq N( U_t a )
      \xrightarrow{t \to \infty}
	N(b)\,.
  \end{equation*}
  The first inequality is strict since $a$ is not identically~$0$ by assumption, and $\hat n_0 > 0$.
  Thus~$0 < N(\ubar a) \leq N(b)$ showing that~$b$ is not identically~$0$.

  We now claim there exists a~$\mu \in [0, \infty)$ such that $b = \hat n_\mu$.
  To see this recall that by Proposition~\ref{p:omegalimit} we know that $U_t(b) \in \omega(a)$ for every~$t \geq 0$.
  Hence, for every $t \geq 0$ we must also have $H(U_t(b)) = H_\infty$ and hence $\partial_t (H(U_t(b))) = 0$.
  Using~\eqref{e:dtH} this implies $D( U_t(b) ) = 0$ for every $t \geq 0$, and thus, in particular $D(b) = 0$.
  Since all solutions of~$D(b) = 0$ are of the form~\eqref{e:nhat} (Lemma~\ref{l:stationary}), there exists~$\mu \in [0, \infty)$ such that~$b =\hat n_\mu$.
  Of course, since~$H(b) = H_\infty$ the identity~\eqref{e:Hmu} holds.

  We will now show that the~$\mu$ satisfying~\eqref{e:Hmu} must be unique.
  To see this, note that
  \begin{equation*}
    \partial_\nu H(\hat n_\nu)=-\nu \int_0^\infty \frac{x^2 e^{x+\nu}}{(e^{x+\nu}-1)^2} \, dx <0, 
  \end{equation*}
  and so the function~$\nu \mapsto H(\hat n_\nu)$ is strictly decreasing.
  Thus there can be at most one solution to~\eqref{e:Hmu}, proving uniqueness.

  The above shows that for any, arbitrarily chosen, $b \in \omega(a)$, we must have~$b = \hat n_\mu$, with~$\mu$ uniquely determined by~\eqref{e:Hmu}.
  Hence~$\omega(a) = \set{\hat n_\mu}$ where~$\mu \in [0, \infty)$ is the unique number satisfying~\eqref{e:Hmu}.

  Finally, to show~$L^1$ convergence, we note that the above shows $U_{t_k} a \to \hat n_\mu$ in $L^1$ for some sequence of times with $(t_k) \to \infty$.
  For any $t > t_k$, the semi-group property, the fact that~$\hat n_\mu$ is invariant under~$U$ implies and the~$L^1$ contraction (Lemma~\ref{l:l1contract}) imply
  \begin{equation*}
    \norm{U_t a - \hat n_\mu}_{L^1}
    = \norm{U_{t-t_k} U_{t_k} a - U_{t - t_k} \hat n_\mu}_{L^1}
    \leq \norm{U_{t_k} a - \hat n_\mu}_{L^1} \,.
  \end{equation*}
  This immediately implies~$U_t a \to \hat n_\mu$ in $L^1$ as~$t \to \infty$.
\end{proof}

We are now in a position to prove Theorem~\ref{t:lim}.
The main idea in the proof is to truncate the initial data to $[0, R]$, and use Proposition~\ref{p:lasalle} to obtain the long time limit solutions with the truncated initial data.
Next we use the $L^1$ contraction (Lemma~\ref{l:l1contract}) to send $R \to \infty$.

\begin{proof}[Proof of Theorem~\ref{t:lim}]
  For any~$R > 0$ we write~$n_0 = a_R + b_R$ where~$a_R = \one_{[0, R]} n_0$.
  Since~$a_R(x) = 0$ for all $x > R$ there must exist~$\gamma = \gamma(R) \geq 0$ such that~$a_R \in A_\gamma$ for some~$\gamma \geq 0$.
  Since $a_R \in A_\gamma$, Proposition~\ref{p:lasalle} applies and there exists a unique~$\nu(R)$ such that $U_t a_R \to \hat n_{\nu(R)}$ in~$L^1$ as $t \to \infty$.

Notice that we have $a_R\leq a_{R'}$ whenever $0<R<R'$. 
Due to the comparison principle it follows $U_ta_R \leq U_t a_{R'}$ for all $t>0$,
whence $\hat n_{\nu(R)}\le \hat n_{\nu(R')}$.
This implies ${\nu(R)}\geq {\nu(R')}$
since the function ~$\mu \mapsto \hat n_\mu(x)$ is decreasing as a function of~$\mu$
for every~$x>0$. 
Thus the limit $\mu \defeq \lim_{R\to\infty}\nu(R)$ exists in $[0,\infty)$. 
Moreover $\|\hat n_{\nu(R)}-\hat n_\mu\|_{L^1}\to0$ as $R\to\infty$ 
by dominated convergence.

We now infer that $n_t\to \hat n_\mu$ in $L^1$ as $t\to\infty$,
by a standard triangle argument:
Given $\epsilon>0$, we may choose $R$ sufficiently large so 
both $\|\hat n_{\nu(R)}-\hat n_\mu\|_{L^1}$
and $\|n_0-a_R\|_{L^1}$ are less than $\frac\epsilon 3$.
Then by the $L^1$ contraction property,
  \begin{align*}
    \norm{n_t - \hat n_\mu}_{L^1} 
      &\leq \norm{n_t - U_t a_R}_{L^1}
	+ \norm{U_t a_R - \hat n_{\nu(R)}}_{L^1}
	+ \norm{\hat n_{\nu(R)} - \hat n_{\mu}}_{L^1}
    \\
      &\leq \norm{n_0 - a_R}_{L^1}
	+ \norm{U_t a_R - \hat n_{\nu(R)}}_{L^1}
	+ \norm{\hat n_{\nu(R)} - \hat n_{\mu}}_{L^1}
	<\epsilon 
  \end{align*}
for sufficiently large $t$.
This implies~\eqref{e:l1conv} as claimed.

  Finally, the identity~\eqref{e:muDef} follows immediately from~\eqref{e:l1conv} and Proposition~\ref{p:lossFormula}.
  Indeed, by~\eqref{e:loss} we have
  \begin{equation*}
    N(n_t) = N(n_0) - \int_0^t n_s(0)^2 \, ds\,.
  \end{equation*}
  By~\eqref{e:l1conv} the left hand side converges to~$N(\hat n_\mu)$ as~$t \to \infty$, yielding~\eqref{e:muDef} as claimed.
\end{proof}

\subsection{Convergence Rate.}

We will now use the entropy to bound the rate at which solutions converge to equilibrium.

\begin{proof}[Proof of Proposition~\ref{p:rate}]
Let $b=\hat n_\mu=\lim_{t\to\infty}n_t$.
Taylor expanding $\Phi$ in~$n$ we see
\begin{align}
  \nonumber
  H(n)-H(b)
    & =\int_0^\infty
      \paren[\Big]{
	(x+\partial_n \Phi(x, b))(n -b)
	+\frac{1}{2}\partial_n^2\Phi(x, \tilde n) (n-b)^2
      } \, dx
    \\
    \nonumber
    & =\int_0^\infty h(x, b) (n-b) \, dx
      +\frac{1}{2}\int_0^\infty \partial_n^2\Phi(x, \tilde n) (n-b)^2 \, dx
    \\ 
    \label{e:HnHb1}
    & = -\mu (N(n) -N(b))
      +\frac{1}{2}\int_0^\infty \partial_n^2\Phi(x, \tilde n) (n-b)^2 \, dx
      \,,
\end{align}
where $\tilde n$ is some intermediate value between $n$ and $b$.

Since~$n_0$ satisfies~\eqref{e:expDecay} we can choose~$\gamma$ large so that~$n_0 \leq S_\gamma$.
Thus by Corollary~\ref{c:Sbound} we must have~$n_t \leq S_\gamma$ for all $t \geq 0$.
This implies
$$
  \partial_n^2\Phi(x, \tilde n)=\frac{x^2}{\tilde n (\tilde n+x^2)}
    \geq \frac{x^2}{S_\gamma (S_\gamma +x^2)},
$$
and hence~\eqref{e:HnHb1} and~\eqref{e:loss} imply
\begin{equation}\label{e:SgammaWeightedRate}
  \int_0^\infty \frac{x^2}{S_\gamma ( S_\gamma + x^2) }(n_t-b)^2 \, dx
    \leq H(n_t)-H(b)+\mu \int_t^\infty n_s(0)^2 \, ds \,.
\end{equation}

Using the Cauchy--Schwarz inequality and~\eqref{e:SgammaWeightedRate} we have 
\begin{align*}
  \norm{ x^2 (n_t - b) }^2_{L^1}
    &\leq \paren[\Big]{
	\int_0^\infty \frac{x^2}{S_\gamma(S_\gamma + x^2)} (n_t - b)^2 \, dx
      }
      \paren[\Big]{
	\int_0^\infty S_\gamma(S_\gamma + x^2) \, dx
      }
  \\
  & \leq C \paren[\Big]{ H(n_t)-H(b)+\mu \int_t^\infty n_s(0)^2 \, ds } \,.
\end{align*}
Here $C = C(\gamma) = \int_0^\infty S_\gamma (S_\gamma + x^2) \, dx$.
Since this is a constant that can be bounded in terms of~$C_0$ alone, the proof is complete.
\end{proof}
\begin{remark}
  Another estimate on the rate of convergence is~\eqref{e:SgammaWeightedRate}.
  This bounds the norm of~$n_t - b$ in the weighted~$L^2$ space with the exponentially growing weight~$x^2 / (S_\gamma (S_\gamma + x^2))$.
\end{remark}

\subsection{Mass Condition for Photon Loss, and Determining \texorpdfstring{$\mu$}{mu}.}

Finally we conclude the paper with the proofs of the first assertion in Proposition~\ref{p:formation},  and Corollary~\ref{c:mulim}.

\begin{proof}[Proof of the first assertion in Proposition~\ref{p:formation}]
  Let~$\mu$ be as in Theorem~\ref{t:lim}.
  Since $N(n_0) > N(\hat n_0) \geq N(\hat n_\mu)$, by~\eqref{e:muDef} we have
  \begin{equation*}
    \int_0^\infty n_t(0)^2 \, dt = N(n_0) - N(\hat n_\mu)
      \geq N(n_0) - N(\hat n_0)
      > 0 \,.
  \end{equation*}
  Thus there must exist some $t < \infty$ for which $n_t(0) > 0$.
  This implies~$t_* < \infty$ concluding the proof.
\end{proof}

\begin{proof}[Proof of Corollary~\ref{c:mulim}]
  For the first assertion, we assume~$n_0 \geq \hat n_0$.
  By the comparison principle (Lemma~\ref{l:comparison}) this implies $n_t \geq \hat n_0$ for all $t \geq 0$.
  By Theorem~\ref{t:lim}, we also know $(n_t) \to \hat n_\mu$ in~$L^1$ as $t \to \infty$.
  However, for any~$\mu > 0$, $\hat n_\mu < \hat n_0$.
  Thus the only way we can have $(n_t) \to \hat n_\mu$ as $t \to \infty$ is if~$\mu = 0$.
  This proves the first assertion.

  For the second assertion, we again let~$\mu$ be as in Theorem~\ref{t:lim}.
  The comparison principle implies~$n_t \leq \hat n_0$ for all $t \geq 0$, and hence~$n_t(0) = 0$ for all $t \geq 0$.
  Using~\eqref{e:muDef} this implies
  \begin{equation*}
    0 = \int_0^\infty n_t(0)^2 \, dt = N(n_0) - N(\hat n_\mu)\,,
  \end{equation*}
  proving~$N(n_0) = N(\hat n_\mu)$ as claimed.

  Finally, for the third assertion let $\ubar n_0 = n_0 \varmin \hat n_0$, and let $\ubar n$ be the solution to~\eqref{e:komp}--\eqref{e:noflux} with initial data $\ubar n$.
  By the comparison principle (Lemma~\ref{l:comparison}), $n_t \geq \ubar n_t$ for all $t \geq 0$.
  By the previous assertion, $N(\ubar n_t) = N(\ubar n_0)$ for all $t \geq 0$, from which~\eqref{e:NmuLower} follows immediately.
\end{proof}

\appendix
\section{Numerical Method.}\label{s:nmethod}
We now describe the numerical method used to generate Figures~\ref{f:formationEHV} and~\ref{f:formationSlope}.
Several authors~\cites{ChangCooper70,LarsenLevermoreEA85} have introduced efficient numerical schemes for one dimensional Fokker--Planck equations which directly apply to the situation at hand.
The scheme we use is simpler to implement than the one proposed in~\cite{LarsenLevermoreEA85}, still preserves many features of the dynamics, and yields good results.
Our numerical scheme does not impose an apriori boundary condition at $x = 0$, and is not designed to not conserve the total photon number.
This is reflective of the true behavior of the equations --- solutions are unique without imposing a boundary condition at $x = 0$, and the photon number is not conserved.
However, our numerical scheme does assume a vanishing condition that is one order higher than what is guaranteed by Theorem~\ref{t:exist}.

Fix the right boundary $R > 0$, large, and a time step $\delta t > 0$ small.
Let $0 = x_0 < x_1 \dots < x_M = R$ be a (non-uniform) spatial mesh, which we choose so that the spacing $\delta x_i \defeq x_{i+1} - x_i$ is proportional to $x_i$.
The reason for this choice is because the solution decays exponentially near infinity, so not much resolution is required for large $x$.
For small $x$, however, the solution develops a jump or a cusp at $x =0$, and so higher resolution is required for small $x$.
By abuse of notation we will use $n_k$ to denote an approximation of the solution~$n$ at time $k\, \delta t$.

We split the flux $J(x,n)$ into the linear terms $J_\lin(x, n)$ (equation~\eqref{e:Jlin}) and the nonlinear term $n^2$.
Since we expect solutions to form a travelling wave towards~$0$, we use an upwind scheme~\cite{CourantIsaacsonEA52} to approximate $\partial_x (J_\lin + n^2)$.
The issue that requires some care, however, is the boundary condition at $x = 0$.
We know from Lemma~\ref{l:intJ0} that $\tau\mapsto x^2 \partial_x n_\tau$ vanishes in $L^1_{\loc}((0, \infty))$ as $x \to 0$.
If we momentarily assume the \emph{stronger} vanishing conditions
\begin{equation}\label{e:strongerVanish0}
  \lim_{x \to 0^+} x^2 \partial_x^2 n = 0\,,
  \qquad\text{and}\qquad
  \lim_{x \to 0^+} x \partial_x n = 0\,,
\end{equation}
then we must have
\begin{equation}\label{e:Jlin0}
  \partial_x J_\lin(x, n) \Bigr\rvert_{x = 0} = -2 n(0)\,,
\end{equation}
and this can readily be used as a boundary condition at $x = 0$ in a numerical scheme.
We do not presently know whether solutions to~\eqref{e:komp}--\eqref{e:noflux} satisfy the stronger vanishing condition~\eqref{e:strongerVanish0} or not.
But using~\eqref{e:Jlin0} as a boundary condition in a numerical scheme yields excellent results.

Explicitly, we discretize~\eqref{e:komp}--\eqref{e:noflux} by treating the linear terms implicitly and the non-linearity explicitly, and solve
\begin{equation}\label{e:explicitNon}
  \frac{n_{k+1} - n_k}{\delta t}
    = \partial_x^+ J^-_\lin(n_{k+1}) + \partial_x^+ n_k^2\,,
\end{equation}
at the mesh points $x_1$, \dots, $x_{M-1}$.
Here $\partial_x^+$ and $\partial_x^-$ are the left and right difference operators
\begin{equation*}
  \partial_x^+ f(x_i) = \frac{f(x_{i+1}) - f(x_i)}{x_{i+1} - x_i}
  \qquad
  \partial_x^- f(x_i) = \frac{f(x_{i}) - f(x_{i-1})}{x_{i} - x_{i-1}}\,,
\end{equation*}
and $J^-_\lin(n_{k+1})$ is an approximation of $J_\lin(n_{k+1})$ using left differences.
For boundary conditions, at the mesh points $x_0 = 0$ and $x_M = R$ we require
\begin{gather}
  \label{e:explicitBC0}
  \frac{n_{k+1}(0) - n_k(0)}{\delta t} = -2 n_{k+1}(0) + \frac{n_k^2(x_1) - n_k^2(0)}{x_1}
  \\
  \label{e:vanishInf}
  n_{k+1}(R) = 0\,.
\end{gather}

\begin{remark}\label{r:params}
  To generate Figures~\ref{f:formationEHV}--\ref{f:formationSlope}, we now solve~\eqref{e:explicitNon}--\eqref{e:vanishInf} with $M = 4000$, $R = 30$, and a mesh chosen so that $x_M - x_{M-1} \approx 0.1$.
This gives $x_1 - x_0 \approx 1.03 \times 10^{-7}$, and we choose $\delta t = x_1 - x_0$.
\end{remark}

\begin{remark*}
  One can also discretize~\eqref{e:komp}--\eqref{e:noflux} by treating the nonlinearity semi-implicitly by replacing~\eqref{e:explicitNon} and~\eqref{e:explicitBC0} by the equations
  \begin{gather}\label{e:semiimplicitNon}
    \tag{\ref*{e:explicitNon}$'$}
    \frac{n_{k+1} - n_k}{\delta t}
      = \partial_x^+ J^-_\lin(n_{k+1}) + \partial_x^+ (n_{k+1} n_k)\,,
    \\
    \tag{\ref*{e:explicitBC0}$'$}
    \frac{n_{k+1}(0) - n_k(0)}{\delta t} = -2 n_{k+1}(0) + \frac{n_{k+1}(x_1)n_k(x_1) - n_{k+1}(x_0)n_k(x_0)}{x_1 - x_0}
  \end{gather}
  While this should yield better results in theory, it is much slower in practice as the matrix defining equation~\eqref{e:semiimplicitNon} needs to reconstructed (and decomposed) at every time step.
\end{remark*}

\bibliographystyle{halpha-abbrv}
\bibliography{mainbib}
\end{document}